\pgfplotsset{compat=1.14}
\newcolumntype{P}[1]{>{\centering\arraybackslash}p{#1}}
\newtheorem{definition}{Definition}
\newtheorem{theorem}{Theorem}[section]%
\newtheorem{proposition}[theorem]{Proposition}%
\newtheorem{lemma}[theorem]{Lemma}%
\newtheorem{remark}[theorem]{Remark}%
\title{Strong solutions to a beta-Wishart particle system }
\author{
  Benjamin Jourdain\footnote{CERMICS, Ecole des Ponts, INRIA, Marne-la-Vall\'ee, France. Emails: benjamin.jourdain@enpc.fr, ezechiel.kahn@enpc.fr}
  \and
   Ez\'echiel Kahn$^*$
}
\date{\today}
\begin{document}

\maketitle

\begin{abstract}
    The purpose of this paper is to study the existence and uniqueness of solutions to a Stochastic Differential Equation (SDE) coming from the eigenvalues of Wishart processes.   The coordinates  are non-negative, evolve as Cox-Ingersoll-Ross (CIR) processes and repulse each other according to a Coulombian like interaction force.  We show the existence of strong and pathwise unique solutions to the system until the first multiple collision, and give a necessary and sufficient condition on the parameters of the SDE for this multiple collision not to occur in finite time. 
\end{abstract}

\section{Introduction}

Let $(M_t)_t$ be a stochastic process tacking its values in the space of  $m\times n$ matrices with real entries verifying the following stochastic differential equation
\begin{equation}
    dM_t = \kappa_1 dW_t-\kappa_2 M_tdt, \text{ }M_0=m_0\nonumber
\end{equation}
where $W$ is a $m\times n$ matrix filled with independant Brownian motions, $m_0$ is a $m\times n$ deterministic matrix, $\kappa_1\in\mathbb{R}$ and $\kappa_2\in\mathbb{R}_+$. The entries of the matrix $M$  are  independent Ornstein-Ulhenbeck processes just as the one considered in \cite{MR1132135}. Such a process is called a \emph{Wishart process}, and it was shown in  \cite{MR991060} and \cite{MR1132135} that the eigenvalues of $M^\dagger M$ satisfy the system of SDEs
\begin{equation*}
    d\lambda^i_t=2\sqrt{\lambda^i_t}dB^i_t+m\kappa_1^2dt-2\kappa_2\lambda^i_tdt+\sum_{j\neq i}\frac{\lambda^i_t+\lambda^j_t}{\lambda^i_t-\lambda^j_t}dt \text{ for all }i\in\{1,\dots n\}
\end{equation*}
where $B^1,\dots, B^n$ are independent Brownian motions. The reader will find in \cite{MR1871699} an analysis of the complex analog of Bru's model. In this paper, we aim to prove existence and uniqueness of solutions to such systems of SDEs for a broader range of parameters.

Let $\alpha\geq0,$ $\gamma\in\mathbb{R}$, $\beta>0,$ $n\geq2$, and $\mathbf{B}=(B^1,\dots,B^n)$ be a $n$-dimensional Brownian motion. Our SDE system of interest is the following :

\begin{eqnarray}
\label{eds}
d\lambda_t^{i}& =& 2\sqrt{\lambda_t^{i}}dB_t^{i} + \left( \alpha-2\gamma\lambda_t^{i}+ \beta\sum_{j\ne i}\frac{\lambda_t^{i}+\lambda_t^{j}}{\lambda_t^{i}-\lambda_t^{j}} \right)dt   \\
&&0\leq \lambda^1_t<\dots< \lambda^n_t, \text{  a.s. } dt-\text{almost everywhere}. \label{orderr} 
\end{eqnarray}

 The system (\ref{eds}) describes the positions of $n$ ordered particles evolving in $\mathbb{R}_+$. It can be rewritten
 
 \begin{eqnarray}
d\lambda_t^{i} & = & 2\sqrt{\lambda_t^{i}}dB_t^{i} + \left( \alpha-(n-1)\beta-2\gamma\lambda_t^{i}+ 2\beta\lambda_t^i\sum_{j\ne i}\frac{1}{\lambda_t^{i}-\lambda_t^{j}} \right)dt\text{ for all  } i\in \{1,\dots,n\}, \label{edsbis}\\
&&0\leq \lambda^1_t<\dots< \lambda^n_t, \text{ a.s. } dt\text{-a.e.} \nonumber.
\end{eqnarray}
We will look for continuous solutions to the SDE (\ref{eds}). Thus, by continuity, we have for all $t\geq0$
\[0\leq \lambda^1_t\leq\dots\leq \lambda^n_t \text{ a.s..}\]

\paragraph{}


For all $i$, let $x_t^{i}=\sqrt{\lambda_t^{i}}$ and $X=(x^1_t,\dots,x^n_t)_t$. We apply  formally (as $x\mapsto\sqrt{x}$ is not twice continuously differentiable in $0$) Ito's formula to obtain
\begin{eqnarray}
\label{sqrteds}
dx_t^{i} &=&dB_t^{i}+ \left(\frac{\alpha-1}{2}\frac{1}{x_t^{i}}-\gamma x_t^{i} +\frac{\beta}{2x_t^{i}}\sum_{j\ne i}\frac{(x_t^{i})^2+(x_t^{j})^2}{(x_t^{i})^2-(x_t^{j})^2}\right)dt \\
& = & dB_t^{i}+ \left(\frac{\alpha-(n-1)\beta-1}{2}\frac{1}{x_t^{i}}-\gamma x_t^{i} +\beta x_t^{i}\sum_{j\ne i}\frac{1}{(x_t^{i})^2-(x_t^{j})^2}\right)dt \text{ for all  $i\in \{1,\dots,n\}$} \label{sqrtedsdual}\\
&&0\leq x^1_t<\dots< x^n_t, \text{ a.s., } dt-a.e..\nonumber
\end{eqnarray}
When $X$ is a solution to (\ref{edsbis}) then $\Lambda =((x^1_t)^2,\dots, (x^n_t)^2)_t$ is a solution to (\ref{eds}), but it is not true the other way round.

The system (\ref{sqrteds}) can be rewritten  
 \begin{equation}
 \label{edspot}
     dx_t^{i}=dB_t^{i}- \partial_iV(x_t^1, ..., x_t^n)dt\text{ for all  $i\in \{1,\dots,n\}$}   
 \end{equation}
 with
 \begin{equation}
 \label{potentiam}
  V(x_1,...,x_n)=- \sum_{i=1}^n\left\{\frac{\alpha-(n-1)\beta-1}{2}\ln| x^{i}|-\frac{1}{2}\gamma (x^{i})^{2}+\frac{\beta}{4}\sum_{j\ne i}\left(\ln| x^{i}-x^{j}| + \ln| x^{i}+x^{j}|\right)\right\}.
 \end{equation}
  
  Indeed, for all $i\in\{1,\dots,n\}$ and for all $j\neq i$, $x^i_t\neq x^j_t$ :
  \begin{eqnarray}
      \frac{1}{x_t^{i}}\sum_{j\ne i}\frac{(x_t^{i})^2+(x_t^{j})^2}{(x_t^{i})^2-(x_t^{j})^2} 
      & = & -\frac{n-1}{x_t^{i}}+\sum_{j\ne i}\left(\frac{1}{x_t^{i}-x_t^{j}}+\frac{1}{x_t^{i}+x_t^{j}}\right). \label{potentialcomputation}
  \end{eqnarray}
  
\paragraph{}
The difficulty in proving the existence of solutions to such   SDEs comes from the fact that there are singularities both when a particle touches zero, as made more explicit in (\ref{sqrteds}) by the square root change of variables, and when two particles touch each other. Both events are called "collision" from now on, and we will speak about "collision between particles" when two particles touch each other.  Consequently, it is not enough to show that there is no collision between the particles to prove  the existence, as it is the case in \cite[Theorem 4.3.2 p251]{MR2760897} for the Dyson Brownian motions which satisfy up to a change of time
\begin{equation*}
    d\lambda^{\mathcal{D},i}_t = \sqrt{2}dB^i_t+\tilde\beta\sum_{j\neq i}\frac{dt}{\lambda^{\mathcal{D},i}_t-\lambda^{\mathcal{D},j}_t}, \text{ for all }i\in\{1,\dots,n\}.
\end{equation*}

If we define $D=\{0<\lambda^1<\lambda^2<\dots<\lambda^n\}$, a collision occurs when the process $\Lambda$ hits the boundary $\partial D$ made of the union of $\{\lambda^i=\lambda^{i+1}\}$ for $i\in\{1,\dots,n-1\}$ and $\{\lambda^1=0\}$. Then, a multiple collision occurs when two of these sets are reached at the same time.
\paragraph{}
Our results about the SDE (\ref{eds}) are the following. In Proposition \ref{multiplecollision}, we give a necessary and sufficient condition for a multiple collision in zero of the coordinates of a solution to the SDE (\ref{eds}) to occur in finite time. Our main result Theorem \ref{existence} give existence and uniqueness of solutions to the SDE (\ref{eds}) on  broader ranges of parameters than it was done before to the best of our knowledge. In Proposition \ref{everyparticlecollide} we give a condition for every coordinate of a solution to the SDE (\ref{eds}) to collide with one of its neighbours in finite time, and we give in Proposition \ref{stationary}   the unique stationary probability measure of the SDE (\ref{eds}).

\paragraph{}

The paper is organized as follows. The rest of Section 1 is devoted to the bibliographical background of this work. In Section 2, we state our main results, in particular Proposition \ref{multiplecollision} and Theorem \ref{existence}. We prove in Section 3 some useful properties of the solutions to the SDE (\ref{eds}), before proving  in Section 4  Proposition \ref{multiplecollision} and Theorem \ref{existence}. We prove the rest of the results in Section 5. Section 6 is an Appendix stating some well-known results that we use in our proofs.

\paragraph{}
Systems of interacting particles following equations of the type
\begin{equation}
\label{example}
    dZ^i_t = b_n(Z^i_t)dt + \sigma_n (Z^i_t)dB^i_t-\psi^i(Z^1_t,\dots,Z^n_t)dt
\end{equation}
where the $\psi^i$ are singular repulsive interaction functions have been studied by many authors.

Rogers and Shi \cite {MR1217451} were interested into the asymptotic behaviour  when $n$ goes to infinity of the empirical measure of the particles solutions to (\ref{example})  when $\psi^i$ takes the form of the $i$-th derivative of a potential.

When this potential takes the form
\begin{equation}
   \Psi : z\in\{z\in\mathbb{R}_+^n:z_1\leq\cdots\leq z_n\}
    \mapsto\ln\left(\prod_{i=1}^nz_i^{\beta(m-n+1)-1}
    \mathrm{e}^{-\frac{\gamma}{2}z_i^2}\prod_{1\leq i<j\leq n}(z_j^2-z_i^2)^\beta\right),\nonumber
  \end{equation} 
  for $m\in\mathbb{N}$ and $\sigma_n=1$,
  the solution to the equation takes the name of  $\beta$-Laguerre process. The parameters $n$ and $m$ are then respectively the number of lines and columns of the underneath random matrix model, and $\beta=1,2$ or $4$ depending on the dimension of the underlying algebra ($\mathbb{R}$, $\mathbb{C}$ or $\mathbb{H}$). The formula still makes sense for all $\beta>0$. The Boltzmann-Gibbs measures related to these potentials were documented by Forrester in \cite{MR2641363}. 
  \paragraph{Link with the multivalued stochastic differential equations theory.}
The systems of type (\ref{example}) were  deeply studied by Cépa and Lépingle   for instance in \cite{MR1440140} and \cite{MR1875671}  where they apply Cépa's multivalued stochastic differential equations theory developed in  \cite{MR1459451}. This theory treats existence and uniqueness of solutions to multivalued SDEs associated with a convex function defined on a domain of $\mathbb{R}^n$. In  \cite{MR2792586}, Lépingle applied this theory to a constrained Brownian motion between reflecting or repellent walls of  Weyl chambers. The boundary behavior of the convex function dictates the behavior of the process  on these same boundaries (hitting or not the boundary in finite time, reflection on the boundary...). Our SDE of interest rewritten in the form (\ref{edspot}) thanks to the square root change of variables can be seen this way.

\paragraph{Link with radial Dunkl processes.}
Let us define a reduced root system $R$ by a finite set in $\mathbb{R}^n\backslash \{0\}$ and  $V=Span(R)$ such that
\begin{itemize}
    \item for all $\alpha\in R$, $R\cap\mathbb{R}\alpha=\{\alpha,-\alpha\}$,
    \item  for all $\alpha\in R$, $\sigma_\alpha(R)=R$ 
\end{itemize}
where $\sigma_\alpha$ is the reflection with respect to the hyperplane orthogonal to $\alpha$. A simple system $\Delta$ is a basis of $V$, and induces a total ordering in $R$ the following way : a root $\alpha\in R$ is  positive if it is a positive linear combination of elements of $\Delta$. We can thus define $R_+$ as the set of positive roots of $R$.
When $\sigma_n = 1$ and $\Psi$ takes the form 
\begin{equation}
    \Psi : z\mapsto -\sum_{\alpha\in R_+}k(\alpha)\ln(\langle\alpha,z\rangle), \text{ }z\in C,\nonumber
\end{equation}
where $C$ is the positive Weyl chamber defined by 
\[C=\{x\in V, \langle \alpha,x\rangle>0\text{ }\forall\alpha\in R_+\},\]
 and $\psi^i=\partial_i\Psi$, Demni proved in \cite[Theorem 1]{MR2566989} existence and uniqueness of a solution to (\ref{example}) on the domain $\bar C$ when $k(\alpha)>0$ for all $\alpha\in R_+$. To do so, he applied Cépa's multivalued stochastic differential equations theory. This system corresponds to (\ref{eds}) for some choice of $R_+$ and $k$. 
 Indeed, when the root system is of so-called $B_n$-type,  it is defined by 
 \begin{align}
     R &= \{\pm e_i, \pm e_i \pm e_j, 1\leq i<j\leq n\}, \nonumber\\
     \Delta &= \{e_{i+1}-e_{i}, 1\leq i \leq n-1 , e_1\},\nonumber\\
     R_+ & = \{e_i, 1\leq i\leq n, e_j\pm e_i, 1\leq i<j\leq n\},\nonumber\\
     C &= \{x\in\mathbb{R}^n, 0<x^1<\dots <x^n\}, \nonumber
 \end{align}
 which,  with the right choice of $k$   gives equation (\ref{sqrteds}) with $\gamma=0$ (see (\ref{potentiam}) and (\ref{potentialcomputation}) for the computation). The condition $k(\alpha)>0$ for all $\alpha\in R_+$ implies $\alpha-(n-1)\beta>1$.
 We seek here to obtain the existence of a solution to (\ref{eds}) while relaxing the last inequality.
 
\paragraph{Link with other works.}

The reader will find in Graczyk and  Malecki  \cite{MR3076363} and \cite{MR3296535} a treatment of equations  (\ref{example}) when $\psi^i$ takes the form 
\begin{equation}
    \psi^i : (z_1,\dots,z_n)\mapsto \sum_{j\neq i}\frac{H_{i,j}(z_i,z_j)}{z_i-z_j}.\nonumber 
\end{equation}
The SDE (\ref{eds}) is a subcase of these equations when $H_{i,j}(z_i,z_j) = z_i+z_j$. In \cite[Theorem 2.2]{MR3296535}, the authors studied these SDEs    and demonstrated the  existence of a strong solution on the time interval $[0,+\infty)$ when $\beta\geq1$. In this regime, they proved that there is no collision between the particles. The authors also demonstrated the  pathwise uniqueness of the solutions to this system for every $\beta$, as recalled in Lemma \ref{pathwise_uniqueness}.

\paragraph{}

In all these references, $\beta$ is identified as a fundamental parameter, its position relative to $1$ governing the possibility of collisions between the particles.

\section{Main results}
We start this section by giving a standard definition that we will use in the paper.
\begin{definition}
Let  $\Lambda_0$ be independent from the Brownian motion $\mathbf{B}$ and let us define for all $t\geq0$ $\mathcal{F}_t=\left(\Lambda_0,(\mathbf{B}_{s})_{s\leq t}\right)$. We will say that a solution to a SDE with initial condition $\Lambda_0$ is \textbf{global} if it is defined on the whole time interval $\mathbb{R}_+$, and \textbf{local} if it is defined up to a stopping time $\mathcal{T}$ for the filtration $\left(\mathcal{F}_t\right)_{t\geq0}$.
\end{definition}

\paragraph{}
The  rewritting (\ref{edsbis}) of the SDE hints that $\alpha-(n-1)\beta$ is a fundamental parameter impacting the existence of solutions. Consequently, we will study where this coefficient has to lie for the SDE to have a  solution. For instance, if we assume  $\alpha-(n-1)\beta<0$, we have:

\begin{eqnarray}
\label{eds1}
d\lambda_t^{1}& =&2\sqrt{\lambda_t^{1}}dB_t^{1} + \left( \alpha-(n-1)\beta-2\gamma\lambda_t^{1}+ 2\beta\lambda_t^1\sum_{j\ne 1}\frac{1}{\lambda_t^{1}-\lambda_t^{j}} \right)dt \\
&\leq&2\sqrt{\lambda_t^{1}}dB_t^{1}-2\gamma\lambda_t^{1}dt.\nonumber
\end{eqnarray}
Then, according to the pathwise comparison theorem of Ikeda and Watanabe (that we recall in Theorem \ref{Ikeda} below) :  $$\lambda^1_t\leq r_t\text{ a.s. for all }t\geq0 $$ where  $$r_t=\lambda^1_0+ 2\int_0^t\sqrt{r_s}dB^1_s-2\int_0^t\gamma r_sds \text{ for all }t\geq0$$
which is a CIR process (see for instance  \cite[Theorem 6.2.2]{MR2362458}). By standard results on CIR processes recalled in Lemma \ref{lamberton} we can conclude that the stopping time $T=\inf\{t\geq0 : r_t=0\}$ verifies $\mathbb{P}(T<\infty)=1$ for $\gamma\geq0$, and $0<\mathbb{P}(T<\infty)<1$ for $\gamma<0$. On $\{T<\infty\}$, after $T$,  $r$ stays at zero indefinitely. As the drift in (\ref{eds1}) is strictly negative when $\lambda^1_t=0$ and therefore when $r_t=0$, it will stay strictly negative on a time interval of positive measure. Consequently, the SDE has no global in time solution.

We thus proved the following result:
  \begin{remark}
\label{0}
 A necessary condition for the existence of a global in time solution to (\ref{eds}) is $\alpha-(n-1)\beta\geq0$.
\end{remark}

We will thus assume this condition in the remaining of the paper.

It is proved in \cite[Corollary 8]{MR3076363} with condition $0\leq\lambda^1_0<\dots<\lambda^n_0$
that for  $\beta\geq1$, the SDE (\ref{eds}) has a unique global strong solution   and that there  actually is no collision.

Demni proved in  \cite[subsection 5.1]{MR2566989}, applying \cite[Theorem 1]{MR2566989} that under the conditions $\alpha -(n-1)\beta>1$ , $\beta>0$, $\gamma=0$ and $0< \lambda^1_0<\dots<\lambda^n_0$, the SDE (\ref{sqrteds}) admits a unique strong solution, and that for $0<\beta<1$ and $\alpha -(n-1)\beta>1$, there is collision between any neighbour particles $\lambda^i$ and $\lambda^{i+1}$ for $i\in\{1,\dots,n-1\}$. In Theorem \ref{existence} and Proposition \ref{everyparticlecollide}, we tackle respectively the existence problem and the collision problem on  wider  intervals for the parameters $\alpha-(n-1)\beta$ and $\gamma$.


   We first give a condition for $k$ particles to collide in zero. 

\begin{proposition}[Multiple collision in zero]
\label{multiplecollision}
Let $k\in\{1,\dots,n\}$. Let the initial condition $\Lambda_0=(\lambda^1_0,\dots, \lambda^n_0)$ be  independent from the Brownian motion $\mathbf{B}$ and such that $0\leq\lambda^1_0\leq\dots\leq\lambda^n_0$ a.s..
Then,

(i) if $\gamma\geq0$, (\ref{eds}) has a global in time solution $\Lambda = (\lambda^1_t,\dots,\lambda^n_t)_t$ and  $k(\alpha-(n-k)\beta)<2,$ \\
then
\[\mathbb{P}(\exists t\geq0 : \lambda_t^{1}+\lambda_t^{2}+\dots+\lambda_t^{k}=0)=1 ,\]

(ii) if $\gamma\in\mathbb{R}$,  $\lambda^k_0>0$ a.s., (\ref{eds}) has a local solution  $\Lambda = (\lambda^1_t,\dots,\lambda^n_t)_t$  defined up to a stopping time $\mathcal{T}$ and  $k(\alpha-(n-k)\beta)\geq2$,\\
then
\[ \mathbb{P}(\mathcal{T}=+\infty,\exists t\geq0 : \lambda_t^{1}+\lambda_t^{2}+\dots+\lambda_t^{k}=0)+\mathbb{P}(\mathcal{T}<+\infty,\underset{t\in[0,\mathcal{T})}{\inf} \lambda_t^{1}+\lambda_t^{2}+\dots+\lambda_t^{k}=0)=0.\]
\end{proposition}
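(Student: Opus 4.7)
The plan is to study the partial sum $S_t := \lambda^1_t + \cdots + \lambda^k_t$ and derive its one-dimensional SDE from (\ref{eds}). The martingale part $2 \sum_{i=1}^k \sqrt{\lambda^i_t}\,dB^i_t$ has quadratic variation $4 S_t\,dt$, hence by L\'evy's characterization it coincides in law with $2\sqrt{S_t}\,d\tilde B_t$ for a suitable Brownian motion $\tilde B$. In the drift, the interaction sum $\sum_{i=1}^k \sum_{j \neq i} \frac{\lambda^i + \lambda^j}{\lambda^i - \lambda^j}$ splits according to whether $j \leq k$ or $j > k$: the $j \leq k$ contributions cancel pairwise by antisymmetry, and for $i \leq k < j$ the identity $\frac{\lambda^i + \lambda^j}{\lambda^i - \lambda^j} = -1 - \frac{2\lambda^i}{\lambda^j - \lambda^i}$ produces
\[
dS_t = 2\sqrt{S_t}\,d\tilde B_t + \bigl(c - 2\gamma S_t + A_t\bigr)\,dt, \qquad c := k(\alpha - (n-k)\beta),
\]
where $A_t := -2\beta \sum_{i \leq k < j} \frac{\lambda^i_t}{\lambda^j_t - \lambda^i_t} \leq 0$. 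Thus $S_t$ obeys a CIR-type SDE perturbed by a non-positive correction $A_t$.

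For part (i), the sign $A_t \leq 0$ yields $dS_t \leq 2\sqrt{S_t}\,d\tilde B_t + (c - 2\gamma S_t)\,dt$, and I would invoke the Ikeda--Watanabe comparison (Theorem~\ref{Ikeda}) to produce $S_t \leq Y_t$ a.s., where $Y$ solves $dY_t = 2\sqrt{Y_t}\,d\tilde B_t + (c - 2\gamma Y_t)\,dt$ with $Y_0 = S_0$. When $c < 2$ and $\gamma \geq 0$, Lemma~\ref{lamberton} forces $Y$ to reach $0$ in finite time a.s.; since $0 \leq S_t \leq Y_t$, the same conclusion follows for $S_t$.

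For part (ii) the crucial ingredient is the lower bound
\[
A_t \geq -\frac{2\beta(n-k)}{\lambda^{k+1}_t - \lambda^k_t}\,S_t,
\]
obtained from $\lambda^j_t - \lambda^i_t \geq \lambda^{k+1}_t - \lambda^k_t$ (for $i \leq k < j$) together with $\sum_{i \leq k} \lambda^i_t = S_t$. Localizing with $\sigma_{N,M} := \inf\{t : \lambda^{k+1}_t - \lambda^k_t \leq 1/N \text{ or } S_t \geq M\} \wedge N$, on $[0, \sigma_{N,M} \wedge \mathcal{T})$ the drift of $S_t$ is bounded below by $c - C_{N,M}\, S_t$ for some finite constant $C_{N,M}$. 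A Yamada--Watanabe-type comparison on $S_t - Y^{(N,M)}_t$ then gives $S_t \geq Y^{(N,M)}_t$, where $Y^{(N,M)}$ is the CIR process with drift $c - C_{N,M}\, y$ and initial value $S_0 > 0$. Since $c \geq 2$, Lemma~\ref{lamberton} guarantees that $Y^{(N,M)}$ never touches $0$ and satisfies $\inf_{[0,T]} Y^{(N,M)}_t > 0$ a.s.\ for every $T$; on trajectories along which $\inf_{t < \mathcal{T}}(\lambda^{k+1}_t - \lambda^k_t) > 0$ and $\sup_{t < \mathcal{T}} S_t < \infty$, some $\sigma_{N,M}$ exceeds $\mathcal{T}$ for $N, M$ large enough, and the infimum bound transfers directly.

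The main obstacle is the complementary scenario on $\{\mathcal{T} < \infty\}$ where $\lambda^{k+1}_t - \lambda^k_t$ or $S_t$ itself degenerates as $t \to \mathcal{T}^-$, which morally corresponds to at least $k+1$ particles approaching $0$ together. To rule out $\inf_{[0,\mathcal{T})} S_t = 0$ on this event requires a quantitative estimate surviving the limit $N, M \to \infty$; this can be obtained by applying It\^o to a Lyapunov test function (for instance $\phi(x) = x^{-a}$ with $a \in (0, c/2 - 1]$ when $c > 2$, or a logarithmic surrogate at the critical value $c = 2$), verifying that $\phi(S_t)$ is a local supermartingale in a neighbourhood of $0$, and using Doob's maximal inequality to obtain $\mathbb{P}(\inf_{[0, T \wedge \mathcal{T}]} S_t \leq \delta) \to 0$ as $\delta \to 0$ uniformly in $T > 0$. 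This forces the infimum of $S_t$ on $[0, \mathcal{T})$ to be strictly positive almost surely.
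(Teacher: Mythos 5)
Your treatment of part (i) is correct and follows the paper's argument exactly: reduce to the one-dimensional SDE for $S_t=\lambda^1_t+\dots+\lambda^k_t$, observe that the interaction with the upper block is non-positive, compare with a CIR process via Ikeda--Watanabe, and apply Lemma~\ref{lamberton} with $a=k(\alpha-(n-k)\beta)<2=\sigma^2/2$ and $b=2\gamma\geq0$.

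For part (ii) you correctly derive the CIR-type lower bound on $S_t$ when $\lambda^{k+1}_t-\lambda^k_t$ is bounded away from zero, and you correctly identify the obstacle: the argument degenerates when $\lambda^{k+1}_t-\lambda^k_t$ can collapse along the same sequence of times where $S_t$ collapses. But the Lyapunov fix you propose does not close this gap. Writing the drift of $S_t$ as $c-2\gamma S_t+A_t$ with $A_t\leq 0$, the drift of $\phi(S_t)=S_t^{-a}$ computes to $aS_t^{-a-1}\bigl[2(a+1)-c+2\gamma S_t-A_t\bigr]$, and the term $-A_t\geq0$ pushes in the wrong direction: $-A_t$ is comparable to $S_t/(\lambda^{k+1}_t-\lambda^k_t)$ and can be arbitrarily large precisely in the scenario you are trying to exclude, so the local supermartingale property of $\phi(S_t)$ near $0$ is exactly what fails. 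In other words, the Lyapunov test function cannot by itself control the singular attraction term $A_t$. The paper resolves this with a \emph{backward induction} on $k$: first, $k=n$ has no $A_t$ term, so $S_t$ is exactly a CIR process which stays positive since $n\alpha\geq k(\alpha-(n-k)\beta)\geq2$ (the map $l\mapsto l(\alpha-(n-l)\beta)$ is nondecreasing when $\alpha\geq(n-1)\beta$). For the step from $k+1$ to $k$, the inductive hypothesis that $\lambda^1+\dots+\lambda^{k+1}$ never vanishes forces $\lambda^{k+1}_t$ (hence $\lambda^{k+1}_t-\lambda^k_t$) to stay bounded below along any sequence where $S_t\to0$; the paper makes this quantitative via the stopping-time decomposition with $\tau^j_\epsilon,\sigma^j_\epsilon$, applying the CIR lower bound only on the excursions where the gap exceeds $\epsilon/2$. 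This inductive control of the $A_t$ term is the essential idea missing from your proposal; once you have it, the Lyapunov machinery is unnecessary.
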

\begin{remark}
\label{remarkmultiplecollision}
We can in fact prove that under the assumptions made in $(i)$ of Proposition \ref{multiplecollision} but with $\gamma<0$,  \[\mathbb{P}(\exists t\geq0 : \lambda_t^{1}+\lambda_t^{2}+\dots+\lambda_t^{k}=0)\in(0,1)\] using Lemma \ref{lamberton} point $3$.
\end{remark}
This proposition is proved in the beginning of Section 4.

\paragraph{}
    For $\beta<1$, we can define a solution to (\ref{sqrteds})   using Cépa's multivoque equations theory \cite{MR1459451}, but we need for this the convexity of the potential $V$, and thus the condition $\alpha-(n-1)\beta>1$,  which is what was made in \cite{MR2566989}.

The following result also applies in the case $\alpha-(n-1)\beta\leq1$ :
\begin{theorem}
\label{existence} Let us assume $\beta<1,\alpha-(n-1)\beta>0$.  Let the initial  condition  $\Lambda_0=(\lambda^1_0,\dots, \lambda^n_0)$ be independent from the Brownian motion $\mathbf{B}$ and such that $0\leq\lambda^1_0\leq\dots\leq\lambda^n_0$ a.s. and $\lambda^2_0>0 $ a.s..
 
 Then, the SDE (\ref{eds}) has a unique strong solution defined on the time interval $$[0,\underset{\epsilon\rightarrow0}{\lim}\zeta_\epsilon)$$ where, for  $\epsilon>0$, 
\begin{equation}
      \zeta_\epsilon=\inf\{t\geq0 :  \lambda^1_t\leq\epsilon \text{ and }\lambda^2_t-\lambda^1_t\leq\epsilon\}.\nonumber
  \end{equation}

  Moreover, 
  \begin{align}
  \text{(i) } &\text{ for }\gamma\in\mathbb{R},\text{ if }\alpha-(n-1)\beta\geq 1-\beta \text{ then }\underset{\epsilon\rightarrow0}\lim \zeta_\epsilon=\infty \text{ a.s.} \nonumber\\
  \text{(ii) }&\text{ for }\gamma\geq0,\text{ if }\alpha-(n-1)\beta< 1-\beta \text{ then }\underset{\epsilon\rightarrow0}\lim \zeta_\epsilon<\infty \text{ a.s.} \nonumber\\
  \text{(iii)  }&\mathbb{P}\Big\{\exists t\in (0,\underset{\epsilon\rightarrow0}{\lim}\zeta_\epsilon): \lambda^i_t = \lambda^{i+1}_t \text{ and } \lambda^j_t = \lambda^{j+1}_t \text{ for some }1\leq i<j\leq n-1\Big\}=0 \nonumber
  \end{align}
\end{theorem}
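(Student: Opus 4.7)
My plan splits into four steps. \emph{Pathwise uniqueness} on $[0,\lim_{\epsilon\to 0}\zeta_\epsilon)$ is inherited directly from Lemma~\ref{pathwise_uniqueness}, which holds for every $\beta>0$ and therefore applies on each $[0,\zeta_\epsilon]$.

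For \emph{existence} up to $\lim_{\epsilon\to 0}\zeta_\epsilon$, the key observation is that by definition of $\zeta_\epsilon$ one has $\lambda^1_t>\epsilon$ or $\lambda^2_t-\lambda^1_t>\epsilon$ at every $t\in[0,\zeta_\epsilon]$, so the only singularity that is not uniformly bounded away is the simultaneous vanishing $\lambda^1=\lambda^2=0$. I would therefore build a strong solution on each $[0,\zeta_\epsilon]$ and glue the resulting family by pathwise uniqueness. Concretely, I would pass to the square-root variables $x^i=\sqrt{\lambda^i}$ of (\ref{sqrteds}) and replace the potential $V$ of (\ref{potentiam}) by a regularization $V^\epsilon$ that agrees with $V$ outside a neighborhood of the double-collision set $\{x^1=x^2=0\}$ of scale $\sqrt{\epsilon}$ and is globally convex on the closed Weyl chamber, absorbing both the sign-changing term $-\tfrac{\alpha-(n-1)\beta-1}{2}\ln|x^1|$ when $\alpha-(n-1)\beta\le 1$ and (as a Lipschitz perturbation) the possibly negative $\gamma$-term. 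Cépa's multivalued SDE theory \cite{MR1459451} applied to $V^\epsilon$ then produces a unique global strong solution whose restriction to $[0,\zeta_\epsilon]$ solves (\ref{eds}). The main technical obstacle is precisely this regularization: the original $V$ is \emph{not} convex when $\alpha-(n-1)\beta\le 1$, so one must construct $V^\epsilon$ by hand in a way compatible with Cépa's structural hypotheses.

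For \emph{parts (i) and (ii)}, the monotone convergence $\zeta_\epsilon\uparrow\lim_{\epsilon\to 0}\zeta_\epsilon$ together with path continuity identifies $\lim_{\epsilon\to 0}\zeta_\epsilon$ with the first time at which $\lambda^1$ and $\lambda^2$ (equivalently $\lambda^1+\lambda^2$) simultaneously vanish. The hypothesis $\alpha-(n-1)\beta\gtrless 1-\beta$ rewrites as $2(\alpha-(n-2)\beta)\gtrless 2$. Part (i) then follows by applying Proposition~\ref{multiplecollision}(ii) with $k=2$ to our local solution (the assumption $\lambda^2_0>0$ provides the required initial condition); the conclusion forbids $\lambda^1+\lambda^2$ from hitting zero, so $\lim\zeta_\epsilon=\infty$ a.s. Part (ii) follows by contradiction: on the event $\{\lim\zeta_\epsilon=\infty\}$ the solution is global, and Proposition~\ref{multiplecollision}(i) with $k=2$ forces $\lambda^1_t+\lambda^2_t$ to vanish in finite time a.s.\ on this event, contradicting the definition of $\zeta_\epsilon$; hence the event has zero probability.

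For \emph{part (iii)}, I split the excluded event into a triple collision $\lambda^i=\lambda^{i+1}=\lambda^{i+2}$ and two disjoint simultaneous pair collisions at indices $i<j$ with $j>i+1$. Triple collisions I would rule out by the partial-sum argument underlying Proposition~\ref{multiplecollision}, applied to the three consecutive coordinates $\lambda^i+\lambda^{i+1}+\lambda^{i+2}$, whose effective drift coefficient is strictly above the critical threshold unlike the pair sum. The case of two disjoint simultaneous collisions is a dimensional argument: after a suitable time change the gap processes $\lambda^{i+1}-\lambda^i$ and $\lambda^{j+1}-\lambda^j$ satisfy Bessel-like SDEs of dimension $1-\beta<1$ whose driving Brownian motions become asymptotically independent near a joint collision, so the event that both processes vanish at the same time has probability zero.
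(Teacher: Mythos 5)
Your pathwise uniqueness and your parts (i)--(ii) track the paper closely, but the existence step and part (iii) both have genuine gaps.

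\textbf{Existence.} The proposed single convex regularization $V^\epsilon$ cannot exist. When $\alpha-(n-1)\beta<1$, the summand $-\tfrac{\alpha-(n-1)\beta-1}{2}\ln|x^1|$ contributes a concave piece to $V$ whose second derivative $\sim -1/(x^1)^2$ blows up along the \emph{whole} hyperplane $\{x^1=0\}$, not merely near the corner $\{x^1=x^2=0\}$. A regularization that agrees with $V$ outside a $\sqrt{\epsilon}$-neighborhood of $\{x^1=x^2=0\}$ would therefore still equal $V$ at points $(0,x^2,\dots,x^n)$ with $x^2$ bounded away from zero, where $V$ is not convex; no such $V^\epsilon$ can be globally convex. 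This is exactly why the paper cannot use a single Cépa-type equation: it introduces $(\hat A_\epsilon)$, which replaces the $\ln|x^i|$ coefficient $\tfrac{\alpha-(n-1)\beta-1}{2}$ by $\tfrac{\alpha-(n-1)\beta}{2}>0$ (convex) plus a bounded Lipschitz correction that is active on all of $\{\hat\lambda^{i}<\epsilon/2\}$---so $(\hat A_\epsilon)$ genuinely ceases to coincide with (\ref{eds}) once $\lambda^1$ dips below $\epsilon/2$---and compensates by alternately switching to a second SDE $(B_\epsilon)$ in which $\lambda^1$ is a decoupled CIR process and only its interactions with the other particles are regularized. The alternation, together with a non-accumulation argument for the switching times, is the paper's actual construction of a solution on $[0,\zeta_\epsilon]$.

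\textbf{Part (iii).} Your triple-collision argument does not go through: the partial-sum argument of Proposition~\ref{multiplecollision} rules out the process $\lambda^1+\dots+\lambda^k$ hitting \emph{zero}, i.e.\ all $k$ coordinates vanishing simultaneously, whereas a triple collision $\lambda^i=\lambda^{i+1}=\lambda^{i+2}$ can occur at a positive level, which that argument says nothing about (and for $i>1$ the interaction terms also change sign). Your ``asymptotic independence'' argument for disjoint simultaneous pair collisions is a heuristic, not a proof: the gap processes are not Bessel processes and their driving noises are coupled through all the other particles. The paper instead proves (\ref{AetaNonMultipleCollision}) and (\ref{multiplecollisionbeta}) by a Girsanov change of measure that removes the $\gamma$-term and the Lipschitz corrections, reducing to a $B_n$-type radial Dunkl process, and then invokes Lépingle's no-multiple-collision theorem \cite[Theorem~3.1]{MR2792586}; the inherited property of $\mathcal{Z}^\epsilon$ on each phase then yields (iii).

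\textbf{Parts (i)--(ii).} These are essentially the paper's argument. One small caveat for (ii): Proposition~\ref{multiplecollision}(i) is stated for a global solution, so applying it ``on the event $\{\lim\zeta_\epsilon=\infty\}$'' is not immediate; the paper avoids this by re-running the CIR comparison directly on the local solution up to $\lim_{\epsilon\to 0}\zeta_\epsilon$.
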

\begin{remark}
Applying Remark \ref{remarkmultiplecollision}, we can in fact prove that under the assumptions made in $(ii)$ of Theorem \ref{existence} but with $\gamma<0$,  \[\mathbb{P}(\underset{\epsilon\rightarrow0}{\lim}\zeta_\epsilon<\infty)\in(0,1)\].
\end{remark}
Theorem \ref{existence} is proved in Section 4. The disjunction $(i)-(ii)$ comes from the application of Proposition \ref{multiplecollision} with $k=2$.
\begin{remark}
This last result states existence and uniqueness of a strong solution to (\ref{eds}) defined on $\mathbb{R}_+$ for $1-\beta\leq\alpha-(n-1)\beta$ and on $[0,\underset{\epsilon\rightarrow0}\lim \zeta_\epsilon)$ if $0<\alpha-(n-1)\beta<1-\beta$. The next step would be to find how to prove the existence of the limit  $\Lambda_{\underset{\epsilon\rightarrow0}\lim \zeta_\epsilon}$  and how start back from it to define a solution on the whole interval $\mathbb{R}_+$ in this last case.
\end{remark}

Demni proved in \cite{MR2566989} that for $\beta<1$ and $\alpha-(n-1)\beta>1$, a collision between the particles $\lambda^{i}$ and $\lambda^{i+1}$  occurs in finite time almost surely for all $i\in\{1,\dots,n-1\}$. We strengthen here this result by showing that for $\beta<1$ and $\alpha-(n-1)\beta>0$,   every particle touches its neighbour particles in finite time almost surely.

\begin{proposition}\label{everyparticlecollide}
Let $\beta<1,\gamma\geq0,\alpha>0$. Let the initial  condition $\Lambda_0=(\lambda^1_0,\dots,\lambda^n_0)$ be independent from the Brownian motion $\mathbf{B}$ such that $0\leq\lambda^1_0\leq\dots\leq\lambda_0^n$. Let us assume  that there is a global in time solution to (\ref{eds}) (which is the case when $\alpha-(n-1)\beta\geq1-\beta$ and $\lambda^2_0>0$ a.s. according to Theorem \ref{existence}). Then for all $i\in\{2,\dots,n\}$, the stopping time \[T^{(i)}= \inf\{t>0:\lambda^i_t=\lambda^{i-1}_t\} \] is such that
\[\mathbb{P}\left\{T^{(i)}<\infty\right\}= 1.\]
\end{proposition}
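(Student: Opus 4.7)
The plan is to show that the gap $U^i_t := \lambda^i_t - \lambda^{i-1}_t$ is dominated, after a suitable random time change, by a squared Bessel process of dimension $\beta+1 < 2$, which reaches zero in finite time almost surely. I would start by deriving the SDE satisfied by $U^i$ from (\ref{eds}): the drift reads $-2\gamma U^i_t + 2\beta(\lambda^i_t+\lambda^{i-1}_t)/U^i_t + R^i_t$, where $R^i_t$ collects the contributions from particles $j\notin\{i-1,i\}$. A direct algebraic simplification reduces each summand of $R^i_t$ to $-2\beta\lambda^j U^i_t/[(\lambda^i-\lambda^j)(\lambda^{i-1}-\lambda^j)]$, which is non-positive because for $j\notin\{i-1,i\}$ the two factors in the denominator share the same sign by the ordering. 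Combined with $\gamma\geq 0$, this gives the upper bound $2\beta(\lambda^i_t+\lambda^{i-1}_t)/U^i_t$ on the drift, while the quadratic variation is $4(\lambda^i_t+\lambda^{i-1}_t)dt$.

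Next I would pass to $V^i_t := (U^i_t)^2$. By Ito's formula the drift of $V^i$ is bounded above by $4(\beta+1)(\lambda^i+\lambda^{i-1})dt$ and the quadratic variation is $16 V^i(\lambda^i+\lambda^{i-1})dt$. Introducing $A_t := 4\int_0^t (\lambda^i_s+\lambda^{i-1}_s)ds$ and its inverse $\rho := A^{-1}$, the Dambis--Dubins--Schwarz theorem shows that $\tilde V_u := V^i_{\rho(u)}$ satisfies
\[
d\tilde V_u \leq 2\sqrt{\tilde V_u}\, d\hat B_u + (\beta+1)\, du,
\]
for a Brownian motion $\hat B$. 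The pathwise comparison theorem (Theorem \ref{Ikeda}) then yields $\tilde V_u \leq Z_u$ on their common time interval, where $Z$ is the squared Bessel process of dimension $\beta+1$ driven by $\hat B$ started from $V^i_0$. Since $\beta+1 < 2$, $Z$ hits $0$ at a finite stopping time $u_0$ almost surely, so $\tilde V_{u_0} = 0$ and therefore $T^{(i)} \leq \rho(u_0)$ provided $\rho(u_0) < \infty$.

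The main obstacle is the completeness of the time change $A$, i.e.\ the requirement $A_\infty \geq u_0$ a.s. A sufficient condition is $\int_0^\infty (\lambda^i_s+\lambda^{i-1}_s)ds = \infty$ a.s. For $i = n$ this is straightforward: summing (\ref{eds}) over all indices makes the repulsion terms cancel, so $S_t := \sum_j \lambda^j_t$ solves the CIR-type equation $dS_t = 2\sqrt{S_t}\, d\hat W_t + (n\alpha-2\gamma S_t)dt$; for $\alpha > 0$ and $\gamma \geq 0$ one has $\int_0^\infty S_s ds = \infty$ a.s.\ by the standard CIR/BESQ results in Lemma \ref{lamberton}, and the ordering $\lambda^n \geq S/n$ closes the argument. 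For general $i < n$ this direct bound fails, and one must argue by contradiction: on $\{T^{(i)} = \infty\}$, the process $N_t := (U^i_t)^{1-\beta}$ is a non-negative local supermartingale (a short Ito computation shows the $1/U^i$-type drift cancels exactly with the Ito correction for this exponent, leaving only the non-positive contributions of $-2\gamma U^i$ and $R^i$), hence converges to some $N_\infty \in [0,\infty)$. Splitting according to $\{N_\infty > 0\}$ (on which $U^i$ and hence $\lambda^i$ stay bounded away from zero, forcing $A_\infty = \infty$ and triggering the Bessel comparison to yield a contradiction) versus $\{N_\infty = 0\}$ (on which $U^i_t \to 0$ without reaching zero, contradicted by applying the Bessel comparison on excursions of $U^i$ into small neighbourhoods of zero via the strong Markov property) closes the proof.
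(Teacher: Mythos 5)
Your first half — bounding the drift of the gap $U^i_t = \lambda^i_t - \lambda^{i-1}_t$ from above by $2\beta(\lambda^i_t+\lambda^{i-1}_t)/U^i_t$, time-changing by $A_t = 4\int_0^t(\lambda^i_s+\lambda^{i-1}_s)ds$, and comparing the result to a Bessel process of dimension $\beta+1<2$ which hits zero in finite time — is exactly the paper's strategy, up to the cosmetic choice of working with $V^i=(U^i)^2$ and squared Bessel instead of $U^i$ and Bessel. You also correctly single out the genuine technical obstacle: completeness of the time change, i.e.\ $\int_0^\infty(\lambda^i_s+\lambda^{i-1}_s)ds=\infty$ a.s., and you note correctly that the direct CIR argument only works for $i=n$.

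The gap is in your treatment of the event $\{N_\infty=0\}$. Your supermartingale computation is fine: with $N_t=(U^i_t)^{1-\beta}$, the singular drift of $U^i$ cancels exactly with the It\^o correction, leaving a non-positive drift, so $N$ is a nonnegative local supermartingale that converges a.s.\ on $\{T^{(i)}=\infty\}$, and the case $\{N_\infty>0\}$ does indeed force $U^i$ and hence $\lambda^i+\lambda^{i-1}$ to stay bounded away from zero at large times, giving $A_\infty=\infty$ and the Bessel contradiction. But on $\{N_\infty=0\}$ you only know $U^i_t\to 0$; you learn nothing about $\lambda^i_t+\lambda^{i-1}_t$, and the scenario where both particles decay to zero fast enough that $\int_0^\infty(\lambda^i_s+\lambda^{i-1}_s)ds<\infty$ is precisely the one you must exclude. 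Your proposed fix via \emph{excursions into small neighbourhoods of zero} does not work here: when $\lambda^i+\lambda^{i-1}$ is small, both the local drift and the local diffusion coefficient of $U^i$ (and of $V^i$) are small, so during such an excursion the process essentially freezes, and there is no uniform-in-$\omega$ lower bound on the probability of hitting zero before escaping. The strong Markov / Borel--Cantelli idea therefore fails to close the argument.

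The paper closes this gap by proving the stronger statement unconditionally, in Lemma~\ref{infini}: for $\gamma>0$, one takes a stationary solution $Y$ with law $\rho_{inv}$ from Proposition~\ref{stationary}, observes by stationarity that $\int_0^\infty y^1_s\,ds$ and $\int_1^\infty y^1_s\,ds$ have the same law, and since $\rho_{inv}(\{x^1>0\})=1$ this forces $\int_0^\infty y^1_s\,ds=\infty$ a.s.; the contraction estimate (\ref{contraction}) then transfers this to an arbitrary solution, and $\lambda^i\geq\lambda^1$ gives the full claim. For $\gamma\leq 0$ a pathwise comparison to a $\gamma>0$ solution finishes. You would need some substitute for this ergodicity input, and the local-supermartingale argument alone does not provide one.
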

To check this result, we prove that 
\begin{equation}
\label{intresult}
    \int_0^{+\infty}\lambda^1_sds=+\infty \text{ a.s.}
\end{equation}
using the next result.
\begin{proposition}\label{stationary}
Let us assume $\gamma>0$ and $\alpha-(n-1)\beta>0$. The unique stationary probability measure of the SDE (\ref{eds}) is $\rho_{inv}$ with density with respect to the Lebesgue measure 
\[d\rho_{inv}(\lambda^1,\dots,\lambda^n) = \frac{1}{2^n}\times\frac{1}{\sqrt{\lambda^1\times\dots\times\lambda^n}}\times\frac{1}{\mathcal{Z}}\times\prod_{i=1}^n\left((\lambda^i)^{\frac{\alpha-1-(n-1)\beta}{2}}e^{-\frac{\gamma}{2}\lambda^i}\prod_{j\neq i}|\lambda^j-\lambda^i|^{\beta/2}\right)\mathds{1}_{0\leq\lambda^1\leq\dots\leq\lambda^n}d\lambda^1\dots d\lambda^n\]
where
\[\mathcal{Z} = \int_{0\leq x^1\leq\dots\leq x^n}\exp\left\{-2V\left(x^1, \dots, x^n\right)\right\}dx^1\dots dx^n\] and $V$ is defined in (\ref{potentiam}) :

if a solution to (\ref{eds}) is such that the distribution of $\Lambda_t$ does not depend on $t$, then this distribution is $\rho_{inv}$. When $\Lambda_0$ is distributed according to $\rho_{inv}$ and is independent from the Brownian motion $\mathbf{B}$, the unique solution to (\ref{eds}) is such that for all $t\in\mathbb{R}_+$, $\Lambda_t$ is distributed according to $\rho_{inv}$, and it is a strong solution.
\end{proposition}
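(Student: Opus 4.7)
The plan is to exploit the Langevin structure revealed by the square-root change of variables $x^i=\sqrt{\lambda^i}$. If $\Lambda$ solves (\ref{eds}) inside the open chamber, then $X_t=(\sqrt{\lambda_t^1},\ldots,\sqrt{\lambda_t^n})$ satisfies the gradient SDE (\ref{edspot}), $dX_t^i=dB_t^i-\partial_i V(X_t)\,dt$, with $V$ given by (\ref{potentiam}). The natural candidate invariant measure for such a gradient diffusion on the open Weyl chamber $C=\{0<x^1<\cdots<x^n\}$ has density proportional to $\mathrm{e}^{-2V(x)}\mathds{1}_C(x)$; pushing this forward by $x\mapsto((x^1)^2,\ldots,(x^n)^2)$, with Jacobian factor $2^n\prod_i\sqrt{\lambda^i}$ and with $|x^i-x^j|\,|x^i+x^j|=|\lambda^i-\lambda^j|$, produces precisely the measure $\rho_{inv}$ of the statement.

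To verify the infinitesimal invariance rigorously, I would check directly that $L^{*}\rho_{inv}=0$ on $\{0<\lambda^1<\cdots<\lambda^n\}$, where $L$ is the generator associated with (\ref{eds}),
\begin{equation*}
Lf=\sum_{i=1}^n 2\lambda^i\,\partial_i^2 f+\sum_{i=1}^n b_i(\lambda)\,\partial_i f,\qquad b_i(\lambda)=\alpha-2\gamma\lambda^i+\beta\sum_{j\neq i}\frac{\lambda^i+\lambda^j}{\lambda^i-\lambda^j}.
\end{equation*}
Indeed, $L^{*}\rho_{inv}=\sum_i\partial_i\big[\rho_{inv}\bigl(2+2\lambda^i\partial_i\ln\rho_{inv}-b_i\bigr)\big]$, and from the explicit form of $\rho_{inv}$ together with the decomposition (\ref{potentialcomputation}) each bracketed expression vanishes pointwise. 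Given this, when $\Lambda_0\sim\rho_{inv}$ independently of $\mathbf{B}$, the assumption $\alpha-(n-1)\beta>0$ makes the density of $\rho_{inv}$ integrable near $\lambda^1=0$, so $\lambda^1_0>0$ a.s.\ and in particular $\lambda^2_0>0$ a.s.; hence Theorem \ref{existence} (for $\beta<1$) or \cite[Corollary 8]{MR3076363} (for $\beta\geq 1$) yields a unique strong solution starting from $\rho_{inv}$. Upgrading the infinitesimal invariance to the invariance of the time-$t$ law then uses a standard Ito-with-localization argument along the stopping times $\zeta_\epsilon$ of Theorem \ref{existence}.

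For the uniqueness of the stationary distribution, my plan is to rely on the ergodicity of the Markov semigroup. One route is to check irreducibility on the interior of $\{0\leq\lambda^1\leq\cdots\leq\lambda^n\}$ via a support-theorem or coupling argument, exploiting that inside the chamber the diffusion coefficient $2\sqrt{\lambda^i}$ is non-degenerate and the drift is smooth. A cleaner alternative is to pass to the $x$-variables: the symmetric Dirichlet form with reference measure $\mathrm{e}^{-2V}dx$ is irreducible, so its self-adjoint generator has a one-dimensional kernel (the constants), from which uniqueness of the invariant probability follows. Any stationary probability $\mu$ of (\ref{eds}) must satisfy $L^{*}\mu=0$ and be absolutely continuous in the open chamber, so by duality $\mu=\rho_{inv}$.

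The main obstacle is the rigorous handling of the singular potential $V$: the drift in (\ref{edspot}) blows up at the walls and at $\{x^i=0\}$, so standard Langevin and Dirichlet form results cannot be quoted as black boxes. I would address this by using $\alpha-(n-1)\beta>0$ to ensure that $\rho_{inv}$ charges neither $\{\lambda^1=0\}$ nor the multiple-collision locus $\bigcup_i\{\lambda^i=\lambda^{i+1}\}$, and by invoking the non-explosion and non-collision statements established earlier in the paper to justify that the stationary process stays almost surely in the regular region for all times. The condition $\gamma>0$ then supplies the confinement at infinity needed to close the ergodicity argument.
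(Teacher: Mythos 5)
Your derivation of the candidate $\rho_{inv}$ via the square-root change of variables and the gradient form (\ref{edspot}) matches the paper, and your pointwise verification of $L^{*}\rho_{inv}=0$ is also what the paper does (in the distributional sense, via integration by parts). The two places where your plan diverges from the paper, and where it currently has real gaps, are the uniqueness argument and the upgrade from a stationary solution of the Fokker--Planck equation to an invariant probability of the diffusion.

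For uniqueness, the paper does not use irreducibility, support theorems, or Dirichlet-form ergodicity. Instead it uses the synchronous-coupling contraction estimate (\ref{contraction}) from Lemma \ref{pathwise_uniqueness}: for two global solutions driven by the same Brownian motion with integrable initial data, $\sum_i\mathbb{E}|z^i_t-\tilde z^i_t|\leq\bigl(\sum_i\mathbb{E}|z^i_0-\tilde z^i_0|\bigr)e^{-2\gamma t}$. One first checks that any invariant probability has a finite first moment --- this follows because the sum $\sum_i\lambda^i_t$ is a CIR process (see (\ref{sumcir})), whose invariant law is a Gamma distribution with finite mean --- and then runs two stationary solutions from two putative invariant laws under the same Brownian motion; since both marginal distances are constant in $t$ while the right-hand side of (\ref{contraction}) tends to $0$ ($\gamma>0$), the two laws coincide. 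This is much more robust than your plan: as you yourself concede, the singular drift near the walls and near $\{x^1=0\}$ makes it impossible to quote irreducibility or Dirichlet-form results as black boxes, and you do not close that gap. You should be using (\ref{contraction}).

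For the existence of an invariant solution, your ``standard It\^o-with-localization along $\zeta_\epsilon$'' step is under-specified and does not obviously work: $\zeta_\epsilon$ only localizes away from the simultaneous event $\{\lambda^1\leq\epsilon,\ \lambda^2-\lambda^1\leq\epsilon\}$, but the drift is also singular at each wall $\{\lambda^i=\lambda^{i+1}\}$ and at $\{\lambda^1=0\}$, so showing that the stationary Fokker--Planck solution gives the law of an actual diffusion invariant in time requires more than that localization. The paper instead invokes \cite[Theorem 2.5]{MR3485364}, whose hypothesis is a weighted integrability of the coefficients against $\rho_{inv}$, and verifies it explicitly; this produces a weak solution with marginals $\rho_{inv}$ for all $t$, which is then upgraded to a strong solution by pathwise uniqueness (Lemma \ref{pathwise_uniqueness} plus Yamada--Watanabe). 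Relatedly, your appeal to Theorem \ref{existence} to construct the solution starting from $\rho_{inv}$ is problematic when $0<\alpha-(n-1)\beta<1-\beta$: in that regime Theorem \ref{existence}(ii) says $\lim_\epsilon\zeta_\epsilon<\infty$ a.s., so that theorem alone does not give a global solution. The existence under $\rho_{inv}$ has to come from the invariance construction itself, not from Theorem \ref{existence}.
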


\begin{remark}
To prove (\ref{intresult}), one could consider applying the Ergodic Theorem, but we were not able to prove that the process defined by SDE (\ref{eds}) is a Markov process. The difficulty came from the choice of the state space: when $\lambda^2_0=0$, we do not know how to prove the existence of a solution to (\ref{eds}). If the state space is $\{0\leq\lambda^1\leq\dots\leq\lambda^n, \lambda^2>0\}$ we do not know how to prove that  for all $t>0$,  $\mathbb{P}(\lambda^2_t>0)=0$ when $\alpha-(n-1)\beta<1-\beta$.
\end{remark}

Table \ref{tab1} shows, for $\gamma\geq0$, the conditions on the coefficients of the SDE (\ref{eds}) for the existence of strong solutions.

\begin{table}
  \centering
  \begin{tabular}{|P{2cm}|P{5cm}|P{10cm}|}
    \hline
    $\alpha-(n-1)\beta$ & $\beta\geq1$          & $\beta<1$ \\ \hline
    $<0$                 & Defined on $[0,\underset{\epsilon\rightarrow0}{\lim}\inf\{t\geq0 : \lambda^1_t \leq\epsilon\})$, no collision, Proposition \ref{neg} & Defined on $[0,\underset{\epsilon\rightarrow0}{\lim}\inf\{t\geq0 : \lambda^1_t \leq\epsilon\})$  which is finite a.s., Proposition \ref{neg}    \\ \hline
    $\in(0,1-\beta)$                  & empty interval & Defined on $[0,\underset{\epsilon\rightarrow0}{\lim}\inf\{t\geq0 :  \lambda^1_t\leq\epsilon \text{ and }\lambda^2_t-\lambda^1_t<\epsilon\})$ , which is finite a.s., Theorem \ref{existence}    \\ \hline
    $\geq1-\beta$                  & Defined on $\mathbb{R}_+$, no collision between particles & Defined on $\mathbb{R}_+$, collision in finite time but no multiple collision in zero, Theorem \ref{existence}    \\ \hline
    $>1$                  & Defined on $\mathbb{R}_+$, no collision between particles & Defined on $\mathbb{R}_+$, collision between particles in finite time,  \cite{MR2566989}, Proposition \ref{everyparticlecollide}    \\ \hline
    $\geq2$                  & Defined on $\mathbb{R}_+$, no collision & Defined on $\mathbb{R}_+$, collision between particles in finite time, $\inf\{t>0:\lambda^1=0\}=+\infty$ a.s., Proposition \ref{multiplecollision}    \\ \hline
  \end{tabular}
  \newline\newline
  \caption{Conditions on the coefficients of SDE (\ref{eds}) for the existence of strong solutions when $\gamma\geq0$}\label{tab1}
\end{table}
\section{Properties of the solutions}

One can first remark, and it will be useful in several proofs, that the sum of the $n$ coordinates of a solution $(\lambda^1_t,\dots,\lambda^n_t)_{t\ge0}$ to SDE (\ref{eds}) follows a CIR process. Indeed,
\begin{align}
   d\left(\sum_{i=1}^n\lambda^i_t\right) & = 2\sum_{i=1}^n\sqrt{\lambda^i_t}dB^i_t-2\gamma\sum_{i=1}^n\lambda^i_tdt+n\alpha dt\nonumber\\
   & = 2\sqrt{\sum_{i=1}^n\lambda^i_t}dW_t-2\gamma\sum_{i=1}^n\lambda^i_tdt+n\alpha dt\label{sumcir}
\end{align}
where $W$   defined by $W_0=0$ and $dW_t=\sum_{i=1}^n\frac{\sqrt{\lambda^i_t}}{\sqrt{\sum_{j=1}^n\lambda^j_t}}dB^i_t$ is a Brownian motion according to Lévy's characterization.
\paragraph{}

The first part of the next Lemma is  proved in \cite[Theorem 5.3]{MR3296535} but we reproduce the proof for the sake of completeness.
\begin{lemma}
  \label{pathwise_uniqueness}
  Let $\gamma\in\mathbb{R}$. The solutions to (\ref{eds}) are pathwise unique.
  
  Moreover, if  $Z=(z^1_t,\dots,z^n_t)_t$ and $\tilde Z=(\tilde z^1_t,\dots,\tilde z^n_t)_t$ are two global in time solutions to (\ref{eds}) with the same driving Brownian motion and verifying 
 \[\mathbb{E}\left[\sum_{i=1}^nz^i_0+\tilde z^i_0\right]<+\infty,\] then for all $t\geq0$,
 
 \begin{equation}
    \sum_{i=1}^n\mathbb{E}|z^i_{t}-\tilde z^i_{t}| \leq\left(\sum_{i=1}^n\mathbb{E}|z^i_0-\tilde z^i_0|\right)\exp(-2\gamma t) .\label{contraction} 
\end{equation}
  \end{lemma}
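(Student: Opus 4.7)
}
The plan is to adapt the Yamada--Watanabe method to this ordered system, exploiting the antisymmetry of the repulsive kernel $H(x,y)=(x+y)/(x-y)$ to tame the Coulomb interaction. Fix the standard Yamada--Watanabe sequence $\phi_\epsilon\in\mathcal{C}^2(\mathbb{R})$ approximating $x\mapsto|x|$: convex, even, with $\phi_\epsilon(x)\uparrow|x|$, $\phi_\epsilon'(x)\to\operatorname{sign}(x)$ pointwise, and the usual control $\phi_\epsilon''(x)|x|\le 2/\log(1/\epsilon)$. Introduce the localizing stopping time
\[
\tau_N=\inf\Bigl\{t\ge 0:\textstyle\sum_i(z^i_t+\tilde z^i_t)\ge N\text{ or }\min_{i<j}\bigl((z^j_t-z^i_t)\wedge(\tilde z^j_t-\tilde z^i_t)\bigr)\le 1/N\Bigr\},
\]
so that on $[0,t\wedge\tau_N]$ both solutions stay bounded, bounded away from zero on the first coordinate (as soon as possible), and ordered with a positive gap. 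Apply It\^o's formula to $\sum_i\phi_\epsilon(z^i_t-\tilde z^i_t)$ stopped at $\tau_N$, producing three ingredients: a true martingale, a quadratic-variation term coming from the $\tfrac12$-H\"older diffusion, and a drift-difference term containing the Ornstein--Uhlenbeck and Coulomb contributions.

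The quadratic-variation contribution is $4\phi_\epsilon''(z^i-\tilde z^i)(\sqrt{z^i}-\sqrt{\tilde z^i})^2$. The elementary identity $(\sqrt a-\sqrt b)^2=(a-b)^2/(\sqrt a+\sqrt b)^2\le|a-b|$ for $a,b\ge 0$ combined with the Yamada--Watanabe bound makes this $O(1/\log(1/\epsilon))$, hence negligible as $\epsilon\to 0$. The Ornstein--Uhlenbeck difference $-2\gamma\phi_\epsilon'(z^i-\tilde z^i)(z^i-\tilde z^i)$ converges pointwise to $-2\gamma|z^i-\tilde z^i|$. After antisymmetric pairing $(i,j)\leftrightarrow(j,i)$ the Coulomb drift becomes
\[
\frac{\beta}{2}\sum_{i\ne j}\bigl(\phi_\epsilon'(z^i-\tilde z^i)-\phi_\epsilon'(z^j-\tilde z^j)\bigr)\bigl(H(z^i,z^j)-H(\tilde z^i,\tilde z^j)\bigr).
\]

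The main obstacle is to show that this paired expression has a non-positive limit. I rely on the explicit identity obtained by integrating $\partial_xH=-2y/(x-y)^2$ and $\partial_yH=2x/(x-y)^2$ along the segment from $(\tilde z^i,\tilde z^j)$ to $(z^i,z^j)$, which stays off the diagonal because both vectors are ordered:
\[
H(z^i,z^j)-H(\tilde z^i,\tilde z^j)=\frac{2(\tilde z^iz^j-\tilde z^jz^i)}{(z^i-z^j)(\tilde z^i-\tilde z^j)}.
\]
For $i<j$ the denominator is positive, so the sign of the $H$-difference matches that of $\tilde z^iz^j-\tilde z^jz^i$. The case analysis in the limit is clean: when $\operatorname{sign}(z^i-\tilde z^i)=\operatorname{sign}(z^j-\tilde z^j)$ the prefactor vanishes; when $z^i>\tilde z^i$ and $z^j<\tilde z^j$ (with $i<j$) the chain $z^j/z^i<\tilde z^j/z^i<\tilde z^j/\tilde z^i$ forces $\tilde z^iz^j-\tilde z^jz^i<0$, so the prefactor $\operatorname{sign}(z^i-\tilde z^i)-\operatorname{sign}(z^j-\tilde z^j)=2>0$ multiplies a negative $H$-difference; the symmetric case $z^i<\tilde z^i$, $z^j>\tilde z^j$ is identical. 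Dominated convergence on $[0,t\wedge\tau_N]$, where the $H$-differences are bounded by a constant depending on $N$, passes the inequality to the limit.

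Collecting the three contributions, taking expectations to kill the martingale part and letting $\epsilon\to 0$ yields
\[
\sum_i\mathbb{E}|z^i_{t\wedge\tau_N}-\tilde z^i_{t\wedge\tau_N}|\le\sum_i\mathbb{E}|z^i_0-\tilde z^i_0|-2\gamma\int_0^t\sum_i\mathbb{E}|z^i_{s\wedge\tau_N}-\tilde z^i_{s\wedge\tau_N}|\,ds,
\]
and Gronwall's lemma produces the exponential contraction at $t\wedge\tau_N$. For (\ref{contraction}), the integrability hypothesis on the initial sum together with the CIR identity (\ref{sumcir}) controls $\mathbb{E}[\sum_iz^i_t+\tilde z^i_t]$ uniformly on compacts, so $\tau_N\uparrow+\infty$ a.s.\ and Fatou's lemma finishes the argument. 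For pathwise uniqueness, taking $\tilde z_0=z_0$ makes the right-hand side vanish, so $z\equiv\tilde z$ on $[0,\tau_N]$ for every $N$; sending $N\to\infty$ covers the full common lifespan of both solutions.
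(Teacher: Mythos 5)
Your overall structure is close to the paper's: the antisymmetric pairing of the Coulomb drift, the algebraic identity
\[
H(z^i,z^j)-H(\tilde z^i,\tilde z^j)=\frac{2(\tilde z^iz^j-\tilde z^jz^i)}{(z^i-z^j)(\tilde z^i-\tilde z^j)},
\]
and the sign analysis are exactly the inequality the paper calls (\ref{sgncomputation}). The difference is methodological: the paper applies Tanaka's formula directly, using the known fact (\cite[Lemma~3.3 p389]{MR1725357}) that the local time of $z^i-\tilde z^i$ at $0$ vanishes for these square-root diffusions, whereas you regularise $|\cdot|$ via the Yamada--Watanabe sequence $\phi_\epsilon$. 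Both routes can work, but your execution has two genuine gaps.

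First, your localisation stopping time is over-engineered and fatally so. You insert the clause $\min_{i<j}\bigl((z^j_t-z^i_t)\wedge(\tilde z^j_t-\tilde z^i_t)\bigr)\le 1/N$, and you need it only to get a pointwise \emph{constant} bound on the $H$-differences for dominated convergence. But for $\beta<1$ collisions occur in finite time (see Proposition~\ref{everyparticlecollide}), so $\tau_N$ converges a.s.\ to the first collision time rather than to $T\wedge\tilde T$. Your Fatou step then only yields uniqueness up to the first collision, which is strictly weaker than the claim. The extra clause is also unnecessary: since by definition of a solution the drift is $ds$-integrable (see Lemma~\ref{4} and the discussion around (\ref{24})), the dominating function $|H(z^i_s,z^j_s)|+|H(\tilde z^i_s,\tilde z^j_s)|$ is $ds$-integrable on $[0,t\wedge\tau_N]$ and, with $|\phi_\epsilon'|\le 1$, dominated convergence already applies. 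You should localise only by $\tau_M=\inf\{t\in[0,T\wedge\tilde T]: \tilde z^n_t+z^n_t\ge M\}$ as the paper does.

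Second, the Gronwall step for the contraction does not deliver (\ref{contraction}) when $\gamma>0$. From
\[
u(t)\le u(0)-2\gamma\int_0^tu(s)\,ds,\qquad u\ge0,
\]
Gronwall's lemma gives the exponential bound only when the multiplier $-2\gamma$ is nonnegative, i.e.\ $\gamma\le0$; for $\gamma>0$ the inequality only gives $u(t)\le u(0)$, and one can construct nonnegative continuous $u$ satisfying the integral inequality but violating $u(t)\le u(0)e^{-2\gamma t}$. The paper sidesteps this by running the whole argument on $e^{2\gamma t}\,|z^i_t-\tilde z^i_t|$ (an integration-by-parts / multiplication by the integrating factor before taking expectations, see (\ref{ippp})), and justifies that the resulting stochastic integral is a true martingale by the moment bound for the CIR sum from (\ref{sumcir}) together with Lemma~\ref{lamberton} point~4. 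You should adopt that device, either by applying It\^o to $e^{2\gamma t}\phi_\epsilon(z^i_t-\tilde z^i_t)$ directly or by inserting the integrating factor at the very end before removing the localisation.
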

  \begin{proof}

   Let $Z=(z^1,\dots,z^n)$ and $\tilde Z=(\tilde z^1,\dots ,\tilde z^n)$ be two solutions to (\ref{eds}) with the same random initial condition $Z_0=\tilde Z_0$ independent from the same driving Brownian motion $\mathbf{B}$, defined respectively on $[0,T]$ and $[0,\tilde T]$, where $T$ and $\tilde T$ are stopping times for a filtration $(\mathcal{F}_t)_{t\geq0}$ with respect to which $\mathbf{B}$ is a Brownian motion and $Z_0$ is $\mathcal{F}_0$-measurable.
   
    Let $M>0$ and \[\tau_M  =  \inf\{t\in[0,T\wedge\tilde T] :\tilde z^n_t+ z^{n}_t\geq M \}\] with the convention $\inf\emptyset= T\wedge\tilde T$.
  As $Z$ and $\tilde Z$ are continuous and assumed well defined on respectively $[0,T]$ and $[0,\tilde T]$,  $\tau_M\uparrow T\wedge\tilde T$ when $M\uparrow\infty$.

  Because of the square root diffusion coefficient, the local time of $z^i-\tilde z^i$ at $0$ is zero (\cite[Lemma 3.3 p389]{MR1725357}). Applying the Tanaka formula to the process $z^i-\tilde z^i$ stopped at $\tau_M$, and summing over $i\in\{1,\dots,n\}$, we get for $t\geq0$
  \begin{eqnarray}
    \sum_{i=1}^n|z^i_{t\wedge\tau_M}-\tilde z^i_{t\wedge\tau_M}| & = & \sum_{i=1}^n|z^i_0-\tilde z^i_0|+\sum_{i=1}^n\int_0^{t\wedge\tau_M}\text{sgn}(z^i_s-\tilde z^i_s)d(z^i_s-\tilde z^i_s)\nonumber\\
    & = &2\sum_{i=1}^n\int_0^{t\wedge\tau_M}|\sqrt{z^i_s}-\sqrt{\tilde z^i_s}|dB^i_s\nonumber\\
    & &+ \beta\int_0^{t\wedge\tau_M}\sum_{i=1}^n\text{sgn}(z^i_s-\tilde z^i_s)\sum_{j\neq i}\left(\frac{z^i_s+z^j_s}{z^i_s-z^j_s}-\frac{\tilde z^i_s+\tilde z^j_s}{\tilde z^i_s-\tilde z^j_s}\right)ds -2\gamma\int_0^{t\wedge\tau_M}\sum_{i=1}^n|z^i_s-\tilde z^i_s|ds\nonumber\\\label{concentration}
  \end{eqnarray}
  where $\text{sgn}(x)=1$ if $x>0$ and $\text{sgn}(x)=-1$ if $x\leq0$.
  As the process is stopped at $\tau_M$, the stochastic integrals have zero expectation and
  \begin{eqnarray}
    \sum_{i=1}^n\mathbb{E}|z^i_{t\wedge\tau_M}-\tilde z^i_{t\wedge\tau_M}| & = & \beta\mathbb{E}\int_0^{t\wedge\tau_M}\sum_{i<j}\left[\frac{z^i_s+z^j_s}{z^i_s-z^j_s}-\frac{\tilde z^i_s+\tilde z^j_s}{\tilde z^i_s-\tilde z^j_s}\right](\text{sgn}(z^i_s-\tilde z^i_s)-\text{sgn}(z^j_s-\tilde z^j_s))ds \nonumber\\ &&-2\gamma\mathbb{E}\int_0^{t\wedge\tau_M}\sum_{i=1}^n|z^i_s-\tilde z^i_s|ds.\nonumber
  \end{eqnarray}
  We have for all $i<j$ :
  \begin{eqnarray}
    \left[\frac{z^i_s+z^j_s}{z^i_s-z^j_s}-\frac{\tilde z^i_s+\tilde z^j_s}{\tilde z^i_s-\tilde z^j_s}\right](\text{sgn}(z^i_s-\tilde z^i_s)-\text{sgn}(z^j_s-\tilde z^j_s)) & = & 2\frac{z^j_s\tilde z^i_s-z^i_s\tilde z^j_s}{(z^i_s-z^j_s)(\tilde z^i_s-\tilde z^j_s)}(\text{sgn}(z^i_s-\tilde z^i_s)-\text{sgn}(z^j_s-\tilde z^j_s))\nonumber\\
    & = & 2\frac{z^j_s(\tilde z^i_s-z^i_s)+z^i_s(z^j_s-\tilde z^j_s)}{(z^i_s-z^j_s)(\tilde z^i_s-\tilde z^j_s)}(\text{sgn}(z^i_s-\tilde z^i_s)-\text{sgn}(z^j_s-\tilde z^j_s)) \nonumber\\
    & = & -2\frac{z^j_s|\tilde z^i_s-z^i_s|+z^i_s|z^j_s-\tilde z^j_s| }{(z^i_s-z^j_s)(\tilde z^i_s-\tilde z^j_s)}|\text{sgn}(z^i_s-\tilde z^i_s)-\text{sgn}(z^j_s-\tilde z^j_s)|\leq 0 \nonumber\\\label{sgncomputation}
  \end{eqnarray}
  
  as the denominator is non-negative.
   Consequently we have for all $M>0$ and $t\geq0$
\begin{equation}
    \sum_{i=1}^n\mathbb{E}|z^i_{t\wedge\tau_M}-\tilde z^i_{t\wedge\tau_M}| \leq -2\gamma\mathbb{E}\int_0^{t\wedge\tau_M}\sum_{i=1}^n|z^i_s-\tilde z^i_s|ds.\nonumber 
\end{equation}

 For $\gamma\geq0$  the left-hand side is $0$. 
 
 For $\gamma<0$, the right-hand side is bounded from above by  $-2\gamma\int_0^t\sum_{i=1}^n\mathbb{E}|z^i_{s\wedge\tau_M}-\tilde z^i_{s\wedge\tau_M}|ds$, and the Grönwall Lemma allows to conclude that for all $M>0$ and $t\geq0$
 \begin{equation}
    \sum_{i=1}^n\mathbb{E}|z^i_{t\wedge\tau_M}-\tilde z^i_{t\wedge\tau_M}| =0.\nonumber 
\end{equation}
  Using Fatou's Lemma to take the limit  $M$ going to infinity, we deduce that for all $t\geq0$
  \begin{equation}
    \sum_{i=1}^n\mathbb{E}|z^i_{t\wedge T\wedge\tilde T}-\tilde z^i_{t\wedge T\wedge\tilde T}| =0 \nonumber 
\end{equation}
which concludes the proof of pathwise uniqueness.
  
  \paragraph{}
 
    Let $Z=(z^1,\dots,z^n)$ and $\tilde Z=(\tilde z^1,\dots ,\tilde z^n)$ be two solutions to (\ref{eds}) defined globally in time, with integrable initial conditions, i.e.
   \[\mathbb{E}\left[\sum_{i=1}^nz^i_0+\tilde z^i_0\right]<+\infty,\]
   and independent from the same driving Brownian motion $\mathbf{B}$. Applying Itô's formula like in (\ref{concentration}),  then an  integration by parts, and last (\ref{sgncomputation}),   we obtain
\begin{align}
    e^{2\gamma t}\left(\sum_{i=1}^n|z^i_t-\tilde z^i_t|\right) & =\sum_{i=1}^n|z^i_0-\tilde z^i_0|+2\sum_{i=1}^n\int_0^te^{2\gamma s}\left|\sqrt{z^i_s}-\sqrt{\tilde z^i_s}\right|dB^i_s+\beta\int_0^{t}e^{2\gamma s}\sum_{i=1}^n\text{sgn}(z^i_s-\tilde z^i_s)\sum_{j\neq i}\left(\frac{z^i_s+z^j_s}{z^i_s-z^j_s}-\frac{\tilde z^i_s+\tilde z^j_s}{\tilde z^i_s-\tilde z^j_s}\right)ds\nonumber\\
    &\leq \sum_{i=1}^n|z^i_0-\tilde z^i_0|+2\sum_{i=1}^n\int_0^te^{2\gamma s}\left|\sqrt{z^i_s}-\sqrt{\tilde z^i_s}\right|dB^i_s.\label{ippp}
\end{align}

  For all $t\geq0$,
  \begin{eqnarray}
    \langle\sum_{i=1}^n\int_0^\cdot\left|\sqrt{z^i_s}-\sqrt{\tilde z^i_s}\right|dB^i_s,\sum_{i=1}^n\int_0^\cdot\left|\sqrt{z^i_s}-\sqrt{\tilde z^i_s}\right|dB^i_s\rangle_t & = & \sum_{i=1}^n\int_0^t\left|\sqrt{z^i_s}-\sqrt{\tilde z^i_s}\right|^2ds\nonumber\\
    &\leq & 2\sum_{i=1}^n\int_0^tz^i_s+\tilde z^i_sds. \nonumber
    \end{eqnarray}
    As the equation (\ref{sumcir}) shows that the sum of the coordinates of $Z$ and $\tilde Z$ are both CIR processes, and by application of Lemma \ref{lamberton} point $4.$, we deduce that
    \begin{eqnarray}
    \mathbb{E}\left[ \langle\sum_{i=1}^n\int_0^\cdot\left|\sqrt{z^i_s}-\sqrt{\tilde z^i_s}\right|dB^i_s,\sum_{i=1}^n\int_0^\cdot\left|\sqrt{z^i_s}-\sqrt{\tilde z^i_s}\right|dB^i_s\rangle_t\right] & \leq &2\mathbb{E}\left[\int_0^t\sum_{i=1}^nz^i_s+\sum_{i=1}^n\tilde z^i_sds\right]<\infty \nonumber
  \end{eqnarray} 
  Consequently, the stochastic integrals in (\ref{ippp}) have zero expectation
 which gives
 \begin{equation}
    \sum_{i=1}^n\mathbb{E}|z^i_{t}-\tilde z^i_{t}| \leq\left(\sum_{i=1}^n\mathbb{E}|z^i_0-\tilde z^i_0|\right)\exp(-2\gamma t) \nonumber
\end{equation}
and ends the proof.
  \end{proof}
  
  For the equation (\ref{eds}) to make sense, integrability of the drift is needed, i.e.
\[\text{for all }i\in\{1,\dots,n\},t\geq0, \int_0^t\left|\alpha-2\gamma\lambda_s^{i}+ \beta\sum_{j\ne i}\frac{\lambda_s^{i}+\lambda_s^{j}}{\lambda_s^{i}-\lambda_s^{j}} \right|ds<\infty \text{ a.s.}.\]
We give a simpler equivalent condition in the next Lemma :
\begin{equation*}
    \text{ for all }t\geq0, \sum_{i=1}^{n-1}\int_0^t\frac{\lambda^{i+1}_s}{\lambda^{i+1}_s-\lambda^{i}_s}ds<\infty \text{ a.s..}\label{24}
\end{equation*}
  
   \begin{lemma}
    \label{4}
Let  $\alpha\geq0,\beta>0$,$\gamma\in\mathbb{R}$, $n\geq2$.    Let $f:\mathbb{R}_+\mapsto \{(x^1,\dots,x^n)\in\mathbb{R}_+, 0\leq x^1\leq\dots\leq x^n\}$  a continuous deterministic  function such as $0\leq f^1_t<f^2_t<\dots<f^n_t$ dt-a.e..
    
   Then we have for all $t\geq0$
   \begin{eqnarray}
   \label{edscond}
    \sum_{i=1}^{n-1}\int_0^t\frac{f_s^{i+1}}{f_s^{i+1}-f_s^{i}}ds<+\infty \Leftrightarrow  \sum_{i=1}^n\int_0^t\left|\alpha-2\gamma f^i_s+\beta\sum_{j\neq i}\frac{f^i_s+f^j_s}{f^i_s-f^j_s}\right|ds<+\infty 
\end{eqnarray}
where by convention for $i\neq j$, $\frac{f^i_s}{f^j_s-f^i_s}=+\infty$ when $f^j_s=f^i_s$.
  \end{lemma}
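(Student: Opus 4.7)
My plan is to abbreviate $A^k_s := \alpha - 2\gamma f^k_s + \beta \sum_{j \neq k}\frac{f^k_s + f^j_s}{f^k_s - f^j_s}$, so the right-hand side reads $\sum_{k=1}^n \int_0^t |A^k_s|\,ds$. Since $f$ is continuous on $[0,t]$, every $f^k_s$ is bounded there, so all non-singular contributions to either side are automatically integrable; the whole content of the equivalence is to match the singular behaviour coming from collisions $f^k_s = f^j_s$.

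For the direction $\Leftarrow$, I would start from an identity obtained by summing the first $i$ drift coefficients. The double sum $\sum_{k=1}^i\sum_{j\neq k}\frac{f^k_s + f^j_s}{f^k_s - f^j_s}$ splits into a part with $j \leq i$, $j \neq k$, which vanishes by the antisymmetry of $(k,j)\mapsto \frac{f^k + f^j}{f^k - f^j}$, and a part with $j > i$, where $f^j_s > f^k_s$ allows rewriting $\frac{f^k + f^j}{f^k - f^j} = -1 - \frac{2f^k}{f^j - f^k}$. This yields, for every $i\in\{1,\dots,n-1\}$,
\begin{equation*}
\sum_{k=1}^i A^k_s = i\bigl(\alpha - \beta(n-i)\bigr) - 2\gamma \sum_{k=1}^i f^k_s - 2\beta \sum_{k=1}^i \sum_{j=i+1}^n \frac{f^k_s}{f^j_s - f^k_s}.
\end{equation*}
All terms on the right apart from the last double sum are bounded continuous functions of $s$ on $[0,t]$, so finiteness of $\sum_k\int_0^t|A^k_s|\,ds$ forces $\int_0^t \sum_{k\leq i<j}\frac{f^k_s}{f^j_s - f^k_s}\,ds<\infty$. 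Since the summands are non-negative, the single term $(k,j)=(i,i+1)$ survives as a lower bound, giving $\int_0^t\frac{f^i_s}{f^{i+1}_s-f^i_s}\,ds<\infty$, and therefore $\int_0^t\frac{f^{i+1}_s}{f^{i+1}_s-f^i_s}\,ds = t+\int_0^t\frac{f^i_s}{f^{i+1}_s-f^i_s}\,ds<\infty$.

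For the direction $\Rightarrow$, I would use a monotonicity estimate: for $j>k$, a common-denominator computation gives
\begin{equation*}
\frac{f^{j+1}_s}{f^{j+1}_s - f^k_s}-\frac{f^j_s}{f^j_s - f^k_s} = \frac{-f^k_s\,(f^{j+1}_s - f^j_s)}{(f^{j+1}_s - f^k_s)(f^j_s - f^k_s)}\leq 0,
\end{equation*}
so $\frac{f^j_s}{f^j_s - f^k_s}\leq\frac{f^{k+1}_s}{f^{k+1}_s - f^k_s}$, and symmetrically $\frac{f^k_s}{f^k_s - f^j_s}\leq\frac{f^k_s}{f^k_s - f^{k-1}_s}$ for $j<k$. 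Combined with $\left|\frac{f^k_s + f^j_s}{f^k_s - f^j_s}\right| = \frac{2\max(f^j_s, f^k_s)}{|f^k_s - f^j_s|}-1$, this produces the pointwise bound
\begin{equation*}
|A^k_s| \leq \alpha + 2|\gamma|f^k_s + 2\beta(n-k)\frac{f^{k+1}_s}{f^{k+1}_s - f^k_s}\mathds{1}_{\{k<n\}}+2\beta(k-1)\frac{f^k_s}{f^k_s - f^{k-1}_s}\mathds{1}_{\{k>1\}}.
\end{equation*}
Summing over $k$ and relabelling the second indicator with $i=k-1$, $\sum_k|A^k_s|$ is dominated by $2\beta n \sum_{i=1}^{n-1}\frac{f^{i+1}_s}{f^{i+1}_s-f^i_s}$ plus a bounded continuous function of $s$, which is integrable on $[0,t]$ exactly when the left-hand side is.

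The genuinely delicate direction is $\Leftarrow$: looking at each $|A^k_s|$ in isolation is insufficient, because the two singular contributions inside $A^k$ coming from the gaps $f^k - f^{k-1}$ and $f^{k+1}-f^k$ carry opposite signs, and in principle cancellations could hide a non-integrable singularity. Summing $A^1 + \cdots + A^i$ removes precisely all repulsion terms internal to $\{1,\dots,i\}$ by antisymmetry and leaves a combination of strictly positive cross-interaction terms from which any individual gap $f^{i+1}-f^i$ can be bounded below.
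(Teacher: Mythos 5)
Your proof is correct, but the hard ($\Leftarrow$) direction takes a genuinely different route from the paper's. The paper proceeds by \emph{backward induction} on the coordinate index $i$: from the rewritten drift $\alpha-(n-1)\beta-2\gamma f^i_s+2\beta f^i_s\sum_{j\neq i}\frac{1}{f^i_s-f^j_s}$, it isolates the (non-negative) downward interactions with $j<i$ by showing that the upward interactions with $j>i$, once rewritten as $(n-i)-\sum_{j>i}\frac{f^j_s}{f^j_s-f^i_s}$, are integrable \emph{because the induction hypothesis already gives $\int_0^t\frac{f^j_s}{f^j_s-f^l_s}\,ds<\infty$ for $j>i$, $l<j$}; the base case $i=n$ works since all interaction terms there carry the same sign. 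Your argument instead forms the block sum $\sum_{k=1}^iA^k_s$, kills every interaction internal to $\{1,\dots,i\}$ in one stroke by antisymmetry of $(k,j)\mapsto\frac{f^k+f^j}{f^k-f^j}$, and is left with a one-signed sum of cross terms $\frac{f^k_s}{f^j_s-f^k_s}$, $k\le i<j$, from which the $(i,i+1)$ term drops out as a lower bound. This avoids the induction entirely and is arguably cleaner; it is also the same block-sum identity the paper exploits later in the proof of Proposition~\ref{multiplecollision}, so your route fits naturally into the paper's toolbox even though it is not the one used for this particular lemma. Your $\Rightarrow$ direction (which the paper dismisses as ``straightforward'') is also correct: the monotonicity observation $\frac{f^j_s}{f^j_s-f^k_s}\le\frac{f^{k+1}_s}{f^{k+1}_s-f^k_s}$ for $j>k$ and its mirror for $j<k$ give exactly the pointwise domination by $2\beta n\sum_{i=1}^{n-1}\frac{f^{i+1}_s}{f^{i+1}_s-f^i_s}$ plus a bounded term.
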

  \begin{proof}

  The proof of the direct implication of (\ref{edscond}) is straightforward.
  
To prove the converse implication, let us check by backward induction on $i\in\{1,\dots, n\}$ that \[ \text{ for all }1\leq l<i\leq n,t\geq0, \int_0^t\frac{f_s^i}{f_s^{i}-f_s^{l}}ds<+\infty,\] which implies condition (\ref{edscond}).
  For all $i\in\{1,\dots,n\}$, $t\geq0$ we have

  \[ \int_0^t\left|\alpha-2\gamma f^i_s+\beta\sum_{j\neq i}\frac{f^i_s+f^j_s}{f^i_s-f^j_s}\right|ds= \int_0^t\left|\alpha-(n-1)\beta-2\gamma f_s^{i}+ 2\beta f_s^i\sum_{j\ne i}\frac{1}{f_s^{i}-f_s^{j}}\right|ds<+\infty\] and thus as $f$ is continuous , 
  \[ \text{ for all }t\geq0, \int_0^t \left|f_s^i\sum_{j\ne i}\frac{1}{f_s^{i}-f_s^{j}}\right|ds<+\infty.\]
  For $i=n$ we have as the coordinates are ordered, 
   \[ \text{ for all }t\geq0, \int_0^t\left|f_s^n\sum_{j= 1}^{n-1}\frac{1}{f_s^{n}-f_s^{j}}\right|ds=\sum_{j= 1}^{n-1} \int_0^t\frac{f_s^n}{f_s^{n}-f_s^{j}}<+\infty\] which gives the result. 
   
   Let $1\leq i<n$ and let us assume the induction hypothesis for all $k\in\{i+1,\dots, n\}$. We have 
   \begin{align}
   \label{firsteq}
       \text{ for all }t\geq0, \int_0^t\left|-f_s^{i}\sum_{j> i}\frac{1}{f_s^{j}-f_s^{i}}+f_s^{i}\sum_{j< i}\frac{1}{f_s^{i}-f_s^{j}}\right|ds&<+\infty\nonumber\\
      \Leftrightarrow \text{ for all }t\geq0, \int_0^t\left|(n-i)-\sum_{j> i}\frac{f_s^{j}}{f_s^{j}-f_s^{i}}+f_s^{i}\sum_{j< i}\frac{1}{f_s^{i}-f_s^{j}}\right|ds&<+\infty\\
       \Rightarrow \text{ for all }t\geq0, \int_0^t f_s^{i}\sum_{j< i}\frac{1}{f_s^{i}-f_s^{j}}ds&<+\infty\nonumber
   \end{align}
as the first sum in (\ref{firsteq}) is integrable by the induction assumption and the terms in the second sum are all non-negative. This gives the result and allows to conclude.

  \end{proof}
  
    \begin{proof}[Proof of Proposition \ref{stationary}]
    
  Let us then show that  an invariant distribution $\rho_{inv}$ has a finite first order moment. To do so, one can remark (see (\ref{sumcir})) that the image by the sum of the $n$ coordinates of $\rho_{inv}$ is invariant for the CIR process \[dr_t=2\sqrt{r_t}dW_t+(n\alpha -2\gamma r_t)dt.\] It is known (see for instance \cite{RePEc:ecm:emetrp:v:53:y:1985:i:2:p:385-407}) that the invariant distribution of such a process is a gamma law of positive parameters, whose density is
  \[r\mapsto \frac{\gamma^{\frac{n\alpha}{2}}}{\Gamma(\frac{n\alpha}{2})}r^{\frac{n\alpha}{2}-1}e^{-\gamma r},\]
  which has a finite first order moment.  We can thus first apply the second part of Lemma \ref{pathwise_uniqueness} for two solutions to (\ref{eds}) starting respectively according to two invariant distributions to deduce that these two invariant distributions are equal.

  \paragraph{}
  Let us now  exhibit the invariant distribution $\rho_{inv}$ of the process. It should solve the Fokker-Planck equation 
  \begin{equation}
      \mathcal{A}^*\rho_{inv} = 0 \label{fokker_planck}
  \end{equation}
  where $\mathcal{A}$ is the infinitesimal generator of the diffusion $\Lambda$ : 
  \begin{align*}
  \mathcal{A} &= \sum_{i=1}^nb_i(\lambda^1,\dots,\lambda^n)\frac{\partial}{\partial \lambda^i}+2\sum_{i=1}^n\lambda^i\frac{\partial^2}{\partial (\lambda^i)^2}
  \end{align*}
  where $b_i(\lambda^1,\dots,\lambda^n) = \alpha-2\gamma\lambda^i+\beta\sum_{j\neq i}\frac{\lambda^i+\lambda^j}{\lambda^i-\lambda^j}$.
  The candidate to be the stationary distribution of the gradient diffusion process defined in (\ref{sqrteds}) has the density \[f(x^1,\dots,x^n)= \frac{1}{\mathcal{Z}}\exp\left\{-2V\left(x^1, \dots, x^n\right)\right\}\mathds{1}_{0\leq x^1\leq\dots\leq x^n}\]
  where $V$ is defined in (\ref{potentiam}) by
  \begin{equation*}
  V(x_1,...,x_n)=- \sum_{i=1}^n\left\{\frac{\alpha-1-(n-1)\beta}{2}\ln| x^{i}|-\frac{1}{2}\gamma (x^{i})^{2}+\frac{\beta}{4}\sum_{j\ne i}\left(\ln| x^{i}-x^{j}| + \ln| x^{i}+x^{j}|\right)\right\}.
 \end{equation*} and \[\mathcal{Z} = \int_{0\leq x^1\leq\dots\leq x^n}\exp\left\{-2V\left(x^1, \dots, x^n\right)\right\}dx^1\dots dx^n.\]
   By a square root change of variables, the natural candidate to be the density of the stationary distribution $\rho_{inv}$ of the process defined in (\ref{eds}) is
   \begin{align*}
       f_{inv}(\lambda^1,\dots,\lambda^n) &= \frac{1}{2^n}\times\frac{1}{\sqrt{\lambda^1\times\dots\times\lambda^n}}\times\frac{1}{\mathcal{Z}}\times\exp\left\{-2V\left(\sqrt{\lambda^1}, \dots, \sqrt{\lambda^n}\right)\right\}\mathds{1}_{0\leq\lambda^1\leq\dots\leq\lambda^n}\\
       &   = \frac{1}{2^n}\times\frac{1}{\sqrt{\lambda^1\times\dots\times\lambda^n}}\times\frac{1}{\mathcal{Z}}\times\prod_{i=1}^n\left((\lambda^i)^{\frac{\alpha-1-(n-1)\beta}{2}}e^{-\frac{\gamma}{2}\lambda^i}\prod_{j\neq i}|\lambda^j-\lambda^i|^{\beta/2}\right)\mathds{1}_{0\leq\lambda^1\leq\dots\leq\lambda^n}.
   \end{align*}
  
  This function is well-defined and integrable. Indeed, for $0<\lambda^1<\dots<\lambda^n$
  \begin{align*}
      \left|f_{inv}(\lambda^1,\dots,\lambda^n)\right|&\leq \frac{1}{2^n}\times\frac{1}{\mathcal{Z}}\times\prod_{i=1}^n\left((\lambda^i)^{\frac{\alpha-2-(n-1)\beta}{2}}e^{-\frac{\gamma}{2}\lambda^i}\prod_{j\neq i}(\lambda^n)^{\beta/2}\right)\\
      &\leq \frac{1}{2^n}\times\frac{1}{\mathcal{Z}}\times(\lambda^n)^{n(n-1)\beta/2}\times\prod_{i=1}^n\left((\lambda^i)^{\frac{\alpha-2-(n-1)\beta}{2}}e^{-\frac{\gamma}{2}\lambda^i}\right)\\
      &\leq \frac{1}{2^n}\times\frac{1}{\mathcal{Z}}\times\prod_{i=1}^{n-1}\left((\lambda^i)^{\frac{\alpha-2-(n-1)\beta}{2}}e^{-\frac{\gamma}{2}\lambda^i}\right)(\lambda^n)^{\frac{(n-1)^2\beta+\alpha-2}{2}}e^{-\frac{\gamma}{2}\lambda^n}
  \end{align*}
  and \[\alpha-(n-1)\beta>0\implies\frac{\alpha-2-(n-1)\beta}{2}>-1. \]
  
  Let us check that $\rho_{inv}$ solves equation (\ref{fokker_planck}) in the sense of distributions. For a test function $\phi$, compactly supported and twice continuously differentiable, since $f_{inv}$ vanishes for $\lambda^1=0$, $\lambda^i=\lambda^{i+1}$ when $i\in\{1,\dots,n-1\}$, and for $\lambda^n\rightarrow+\infty$, we obtain by integration by parts that for $i\in\{1,\dots,n\}$
  \begin{align}
      \int_{0\leq \lambda^1\leq\dots\leq \lambda^n}&\lambda^i\frac{\partial^2\phi}{\partial (\lambda^i)^2}(\lambda^1,\dots,\lambda^n)f_{inv}(\lambda^1,\dots,\lambda^n)d\lambda^1\dots d\lambda^n \nonumber\\
       & = -\int_{0\leq \lambda^1\leq\dots\leq \lambda^n}\frac{\partial\phi}{\partial \lambda^i}(\lambda^1,\dots,\lambda^n)\left(f_{inv}(\lambda^1,\dots,\lambda^n)+\lambda^i\frac{\partial f_{inv}}{\partial \lambda^i}(\lambda^1,\dots,\lambda^n)\right)d\lambda^1\dots d\lambda^n. \label{ipp3}
  \end{align}

  Then we have 
   \begin{align}
    &\int_{0\leq \lambda^1\leq\dots\leq \lambda^n}\mathcal{A}\phi(\lambda^1,\dots,\lambda^n)d\rho_{inv}(\lambda^1,\dots,\lambda^n) \nonumber\\
    &=\sum_{i=1}^n\int_{0\leq \lambda^1\leq\dots\leq \lambda^n}\left[(b_i(\lambda^1,\dots,\lambda^n)-2)f_{inv}(\lambda^1,\dots,\lambda^n)-2\lambda^i\frac{\partial f_{inv}}{\partial \lambda^i}(\lambda^1,\dots,\lambda^n)\right]\frac{\partial \phi}{\partial \lambda^i}(\lambda^1,\dots,\lambda^n)d\lambda^1\dots d\lambda^n.\nonumber
  \end{align}
  As for $0<\lambda^1<\dots<\lambda^n$
  \begin{align}
    \frac{\partial f_{inv} }{\partial\lambda^i}(\lambda^1,\dots,\lambda^n) & = \frac{1}{2^n\mathcal{Z}\sqrt{\lambda^1\times\dots\times\lambda^n}}\exp\left\{-2V\left(\sqrt{\lambda^1}, \dots, \sqrt{\lambda^n}\right)\right\}\left[-\frac{1}{2\lambda^i}-\frac{1}{\sqrt{\lambda^i}}\frac{\partial V}{\partial x^i}(\sqrt{\lambda^1},\dots,\sqrt{\lambda^n})\right]\nonumber\\
    & = -\frac{1}{2\sqrt{\lambda^i}}\left[\frac{1}{\sqrt{\lambda^i}}+2\frac{\partial V}{\partial x^i}(\sqrt{\lambda^1},\dots,\sqrt{\lambda^n})\right]f_{inv}(\lambda^1,\dots,\lambda^n),\nonumber
  \end{align}
  a calculus gives us
  \begin{align}
      (b_i(&\lambda^1,\dots,\lambda^n)-2)f_{inv}(\lambda^1,\dots,\lambda^n)-2\lambda^i\frac{\partial f_{inv}}{\partial \lambda^i}(\lambda^1,\dots,\lambda^n)\nonumber\\ &=f_{inv}(\lambda^1,\dots,\lambda^n)\left[b_i(\lambda^1,\dots,\lambda^n)-2+\sqrt{\lambda^i}\left(\frac{1}{\sqrt{\lambda^i}}+2\frac{\partial V}{\partial x^i}(\sqrt{\lambda^1},\dots,\sqrt{\lambda^n})\right)\right] \nonumber\\
      & = f_{inv}(\lambda^1,\dots,\lambda^n)\left[\alpha-2\gamma\lambda^i+\beta\sum_{j\neq i}\frac{\lambda^i+\lambda^j}{\lambda^i-\lambda^j}-2+1+2\sqrt{\lambda^i}\left(-\frac{\alpha-1}{2}\frac{1}{\sqrt{\lambda^i}}+\gamma\sqrt{\lambda^i}-\frac{\beta}{2\sqrt{\lambda^i}}\sum_{j\neq i}\frac{\lambda^i+\lambda^j}{\lambda^i-\lambda^j}\right)\right]\nonumber\\
      & = 0.\nonumber
  \end{align}
  The distribution $\rho_{inv}$ is thus solving (\ref{fokker_planck}) in the sense of distributions. Let us show that it is an invariant distribution for the process defined by (\ref{eds}) using \cite[Theorem 2.5]{MR3485364}. To do so, we only need to verify that 
  \[\int_{0\leq \lambda^1\leq\dots\leq \lambda^n}2\sum_{i=1}^n\left\{\lambda^i+\left|b_i(\lambda_1,\dots,\lambda^n)\right|\right\}d\rho_{inv}(\lambda^1,\dots,\lambda^n)<+\infty.\]
We have 
\begin{align*}
    \int_{0\leq \lambda^1\leq\dots\leq \lambda^n}&2\sum_{i=1}^n\left\{\lambda^i+\left|b_i(\lambda_1,\dots,\lambda^n)\right|\right\}f_{inv}(\lambda^1,\dots,\lambda^n)d\lambda^1\dots d\lambda^n\\
    & = \int_{0\leq \lambda^1\leq\dots\leq \lambda^n}2\sum_{i=1}^n\left\{\lambda^i+\left|\alpha-2\lambda^i+\beta\sum_{j\neq i}\frac{\lambda^i+\lambda^j}{\lambda^i-\lambda^j}\right|\right\}f_{inv}(\lambda^1,\dots,\lambda^n)d\lambda^1\dots d\lambda^n.
\end{align*}
The exponential factors in $f_{inv}$ crushes every other term on the $+\infty$ boundary. If $\beta\geq1$, the boundary $\lambda^i=\lambda^j$ is not singular. Let us discuss further this boundary when $\beta<1$ by looking at the term below for $1\leq j<i\leq n$ :
\begin{align}
   \int_{0\leq \lambda^1\leq\dots\leq \lambda^n}&\frac{\lambda^i+\lambda^j}{\lambda^i-\lambda^j}f_{inv}(\lambda^1,\dots,\lambda^n)d\lambda^1\dots d\lambda^n \nonumber\\
   &\leq \frac{1}{2^{n-1}}\times\frac{1}{\mathcal{Z}}\int_{0\leq \lambda^1\leq\dots\leq \lambda^n}(\lambda^i-\lambda^j)^{-1}\lambda^n\prod_{k=1}^n\left((\lambda^k)^{\frac{\alpha-2-(n-1)\beta}{2}}e^{-\frac{\gamma}{2}\lambda^k}\prod_{l\neq k}|\lambda^l-\lambda^k|^{\beta/2}\right)d\lambda^1\dots d\lambda^n\label{1toexpl}\\
    &\leq \frac{1}{2^{n-1}}\times\frac{1}{\mathcal{Z}}\int_{0\leq \lambda^1\leq\dots\leq \lambda^n}(\lambda^i-\lambda^j)^{-1+\beta}(\lambda^n)^{1-\beta+\frac{n(\alpha-2)}{2}}e^{-\frac{\gamma}{2}\lambda^n}d\lambda^1\dots d\lambda^n\label{2toexpl}\\
    &\leq \frac{1}{2^{n-1}}\times\frac{1}{\mathcal{Z}}\int_{(\mathbb{R}_+)^{n-1}}\left(\int_{\mathbb{R}_+}\mathds{1}_{0\leq\lambda^1\leq\lambda^2}d\lambda^1\right)(\lambda^i-\lambda^j)^{-1+\beta}(\lambda^n)^{1-\beta+\frac{n(\alpha-2)}{2}}e^{-\frac{\gamma}{2}\lambda^n}\mathds{1}_{0\leq\lambda^2\leq\dots\leq\lambda^n}d\lambda^2\dots d\lambda^n\nonumber\\
    &\leq \frac{1}{2^{n-1}}\times\frac{1}{\mathcal{Z}}\int_{(\mathbb{R}_+)^{n-1}}(\lambda^i-\lambda^j)^{-1+\beta}(\lambda^n)^{2-\beta+\frac{n(\alpha-2)}{2}}e^{-\frac{\gamma}{2}\lambda^n}\mathds{1}_{0\leq\lambda^2\leq\dots\leq\lambda^n}d\lambda^2\dots d\lambda^n\nonumber\\
    &\text{ }\vdots\nonumber\\
    &\leq \frac{1}{2^{n-1}}\times\frac{1}{\mathcal{Z}}\int_{(\mathbb{R}_+)^{n-j+1}}(\lambda^i-\lambda^j)^{-1+\beta}(\lambda^n)^{j-\beta+\frac{n(\alpha-2)}{2}}e^{-\frac{\gamma}{2}\lambda^n}\mathds{1}_{0\leq\lambda^j\leq\dots\leq\lambda^n}d\lambda^j\dots d\lambda^n\nonumber\\
    &\leq \frac{1}{2^{n-1}}\times\frac{1}{\mathcal{Z}}\int_{(\mathbb{R}_+)^{n-j}}\left(\int_{\mathbb{R}_+}(\lambda^i-\lambda^j)^{-1+\beta}\mathds{1}_{0\leq\lambda^j\leq\lambda^{j+1}}d\lambda^j\right)(\lambda^n)^{j-\beta+\frac{n(\alpha-2)}{2}}e^{-\frac{\gamma}{2}\lambda^n}\mathds{1}_{0\leq\lambda^{j+1}\leq\dots\leq\lambda^n}d\lambda^{j+1}\dots d\lambda^n\nonumber\\
    &\leq \frac{1}{2^{n-1}(1-\beta)}\times\frac{1}{\mathcal{Z}}\int_{(\mathbb{R}_+)^{n-j}}(\lambda^n)^{j+\frac{n(\alpha-2)}{2}}e^{-\frac{\gamma}{2}\lambda^n}\mathds{1}_{0\leq\lambda^{j+1}\leq\dots\leq\lambda^n}d\lambda^{j+1}\dots d\lambda^n<+\infty\nonumber\\
    &\text{ }\vdots\nonumber\\
    &\leq \frac{1}{2^{n-1}(1-\beta)}\times\frac{1}{\mathcal{Z}}\int_{\mathbb{R}_+}(\lambda^n)^{-1+\frac{n\alpha}{2}}e^{-\frac{\gamma}{2}\lambda^n}d\lambda^n<+\infty\nonumber
\end{align}
as $\alpha>(n-1)\beta>0$. To go from (\ref{1toexpl}) to (\ref{2toexpl}), we  bound from above $(\lambda^k)^{\frac{\alpha-2-(n-1)\beta}{2}}$ by $(\lambda^n)^{\frac{\alpha-2-(n-1)\beta}{2}}$, $|\lambda^l-\lambda^k|^{\beta/2}$ for $\{l,k\}\neq\{i,j\}$ by $(\lambda^n)^\frac{\beta}{2}$ and $e^{-\frac{\gamma}{2}\lambda^k}$ for $k\neq n$ by $1$.  We can thus apply \cite[Theorem 2.5]{MR3485364} to deduce that we can define a process solving (\ref{eds}) whose marginals follow the law $\rho_{inv}$.

The distribution $\rho_{inv}$ is thus invariant, allows to build a weak  solution to (\ref{eds}) (by taking every marginal distributed according to $\rho_{inv}$). By pathwise uniqueness, it is a strong solution.

  \end{proof}
  
  \section{Proof of Theorem \ref{existence}}

We start this section by the proof of Proposition \ref{multiplecollision} since this result is crucial in the proof of Theorem \ref{existence}.

\paragraph{Proof of Proposition \ref{multiplecollision}}
\paragraph{(i)}
To prove this assertion, we study for all $k\in\{1,\dots,n\}$ the process $\lambda^1+\dots+\lambda^k$ and show that, as its interaction terms  with the particles $\lambda^{k+1},\dots,\lambda^n$ are non-positive, it is smaller than a CIR process hitting zero in finite time.

Let us define $W^k$ by $W^k_0=0$ and \\ $dW^k_t=\sum_{i=1}^k\left(\mathds{1}_{\left\{\sum_{j=1}^k\lambda^j_t\neq0\right\}}\frac{\sqrt{\lambda^i_t}}{\sqrt{\sum_{j=1}^k\lambda^j_t}}+\mathds{1}_{\left\{\sum_{j=1}^k\lambda^j_t=0\right\}}\frac{1}{\sqrt{k}}\right)dB^i_t$.\\  According to Lévy's characterization, $W^k$ is a Brownian motion. For $t\geq0$,
  \begin{eqnarray}
  d(\lambda^{1}_t+\dots+\lambda^k_t) & = & 2\sqrt{\lambda^{1}_t+\dots+\lambda^k_t}dW^k_t-2\gamma(\lambda^{1}_t+\dots+\lambda^k_t)dt+k(\alpha-(n-k)\beta)dt+2\beta\sum_{i=1}^k\lambda^i_t\sum_{j=k+1}^n\frac{1}{\lambda^i_t-\lambda_t^j}dt\nonumber \\
  &\leq& 2\sqrt{\lambda^{1}_t+\dots+\lambda^k_t}dW^k_t-2\gamma(\lambda^{1}_t+\dots+\lambda^k_t)dt+k(\alpha-(n-k)\beta)dt.\label{aaaaa}
  \end{eqnarray}
By the pathwise comparison theorem of Ikeda and Watanabe (that we recall in Theorem \ref{Ikeda} below),
$$\lambda^1_t+\dots+\lambda^k_t\leq r_t \text{ for all } t\geq0 \text{ a.s.}$$ where  
\begin{equation}
\label{r}
    r_t= \lambda^1_0+\dots+\lambda^k_0+2\int_0^t\sqrt{r_s}dW^k_s-2\gamma \int_0^tr_sds+k(\alpha-(n-k)\beta)t
\end{equation}

 is a CIR process. Applying  Lemma \ref{lamberton} with $a=k(\alpha-(n-k)\beta)$, $b=2\gamma$ and $\sigma=2$ which satisfy $a<\frac{\sigma^2}{2}$ and $b\geq0$, we can conclude.

\paragraph{(ii)}
To prove this assertion, we proceed by backward induction on $k$. Indeed, if $k(\alpha-(n-k)\beta)\geq2$ then for all $l\geq k$, $
l(\alpha-(n-l)\beta)\geq2$. The idea is to show that the process $\lambda^1+\dots+\lambda^k$ is bigger than a CIR process which never hits zero. To do so, we exploit the fact that this process cannot hit zero at the same time as the coordinate $\lambda^{k+1}$  by induction assumption. 

For all $k\in\{1,\dots,n\}$, we define the Brownian motion $W^k$ the following way :\\ $dW^k_t=\mathds{1}_{\{t\in[0,\mathcal{T})\}}\sum_{i=1}^k\left(\mathds{1}_{\left\{\sum_{j=1}^k\lambda^j_t\neq0\right\}}\frac{\sqrt{\lambda^i_t}}{\sqrt{\sum_{j=1}^k\lambda^j_t}}+\mathds{1}_{\left\{\sum_{j=1}^k\lambda^j_t=0\right\}}\frac{1}{\sqrt{k}}\right)dB^i_t+\mathds{1}_{\{t\geq\mathcal{T}\}}\frac{1}{\sqrt{k}}\sum_{i=1}^kdB^i_t$.
\\For $k=n$ the inequality (\ref{aaaaa}) is an equality for $t<\mathcal{T}$, and according to Lemma \ref{lamberton}, the CIR process $r$ defined by (\ref{r}) is defined  globally in time and for $t\in[0,\mathcal{T})$ we have $$\lambda^1_t+\dots+\lambda^n_t = r_t >0$$ and $r_\mathcal{T}>0$ on $\{\mathcal{T}<\infty\}$ a.s..  We can thus conclude. 

Let us now assume that for some $k\in\{1,\dots,n\}$, $k(\alpha-(n-k)\beta)\geq2$ and $$\mathbb{P}\left(\mathcal{T}<\infty,\underset{t\in[0,\mathcal{T})}{\inf} (\lambda_t^{1}+\lambda_t^{2}+\dots+\lambda_t^{k+1})=0\right)+\mathbb{P}\left(\mathcal{T}=\infty,\exists t\geq0 : \lambda_t^{1}+\lambda_t^{2}+\dots+\lambda_t^{k+1}=0\right)=0. $$ Then, 

\begin{eqnarray}
    \mathbb{P}\Big(\mathcal{T}<\infty,\underset{t\in[0,\mathcal{T})}{\inf}(\lambda^{1}_t+\dots+\lambda^{k}_t)=0\Big)
    = \underset{\epsilon\downarrow0}\lim\mathbb{P}\Bigg(\mathcal{T}<\infty,\underset{t\in[0,\mathcal{T})}{\inf} (\lambda_t^{1}+\dots+\lambda_t^{k}+\mathds{1}_{\{\lambda_t^{k+1}-\lambda_t^k<\epsilon\}})=0\Bigg)  \nonumber
\end{eqnarray}
and
\begin{eqnarray}
    \mathbb{P}\left(\mathcal{T}=\infty,\exists t\geq0 : \lambda_t^{1}+\lambda_t^{2}+\dots+\lambda_t^{k}=0\right)&=&\underset{\epsilon\downarrow0}\lim\mathbb{P}\Bigg(\mathcal{T}=\infty,\exists t\geq0 : \lambda_t^{1}+\lambda_t^{2}+\dots+\lambda_t^{k} = 0 \text{ and }\lambda_t^{k+1}-\lambda_t^k\geq\epsilon\Bigg)  \nonumber\\
    &=&\underset{\epsilon\downarrow0}\lim\mathbb{P}\Bigg(\mathcal{T}=\infty,\exists t\geq0 : \lambda_t^{1}+\lambda_t^{2}+\dots+\lambda_t^{k}+\mathds{1}_{\{\lambda_t^{k+1}-\lambda_t^k<\epsilon\}}=0\Bigg).  \nonumber
\end{eqnarray}
For $\epsilon>0$, starting with
\begin{eqnarray}
\tau^0_\epsilon &=&\inf\{t \in[0,\mathcal{T}): \lambda_t^{k+1}-\lambda_t^k\geq\epsilon\}, \nonumber
\end{eqnarray}
let us define inductively for $j\in\mathbb{N}$
\begin{eqnarray}
\sigma_\epsilon^j & = & \inf\{t\in(\tau_\epsilon^j,\mathcal{T}) : \lambda_t^{k+1}-\lambda_t^k\leq\frac{\epsilon}{2}\} , \nonumber\\
\tau_\epsilon^{j+1}&=&\inf\{t\in(\sigma_\epsilon^j,\mathcal{T}): \lambda_t^{k+1}-\lambda_t^k\geq\epsilon\}, \nonumber
\end{eqnarray}
with the convention $\inf\emptyset=\mathcal{T}$. As the function $t\rightarrow\lambda_t^{k+1}-\lambda_t^k$ is continuous on $[0,\mathcal{T})$, 
\[\sigma_\epsilon^j,\tau_\epsilon^j\underset{j\rightarrow +\infty}{\longrightarrow}\mathcal{T}.\]

As  $\lambda_t^{k+1}-\lambda_t^{k}<\epsilon$ on $[0,\tau^0_\epsilon)$ and $[\sigma^j_\epsilon,\tau^{j+1}_\epsilon)$ for $j\in\mathbb{N}$,
\begin{eqnarray}
    \Bigg\{\mathcal{T}<\infty &,& \underset{t\in[0,\mathcal{T})}{\inf} (\lambda_t^{1}+\dots+\lambda_t^{k}+\mathds{1}_{\{\lambda_t^{k+1}-\lambda_t^k<\epsilon\}})=0\Bigg\} =\left\{\mathcal{T}<\infty,\exists j\in\mathbb{N}^* ,\underset{t\in[\tau_\epsilon^j, \sigma_\epsilon^j)}{\inf}(\lambda_t^{1}+\dots+\lambda_t^{k})=0 \right\}\nonumber\\
   \text{ and } \Bigg\{\mathcal{T}=\infty&,&\exists t\geq0 : \lambda_t^{1}+\lambda_t^{2}+\dots+\lambda_t^{k}+\mathds{1}_{\{\lambda_t^{k+1}-\lambda_t^k<\epsilon\}} = 0\Bigg\} \nonumber\\ &&=\Bigg\{\mathcal{T}=\infty,\exists j\in\mathbb{N}^* ,\exists t\in[\tau_\epsilon^j, \sigma_\epsilon^j) : \lambda_t^{1}+\lambda_t^{2}+\dots+\lambda_t^{k} = 0 \Bigg\},\nonumber
\end{eqnarray}

it is enough to check that
\begin{eqnarray}
    &&\mathbb{P}\left(\left\{\mathcal{T}<+\infty, \exists j\in\mathbb{N}^* ,\underset{t\in[\tau_\epsilon^j, \sigma_\epsilon^j)}{\inf}(\lambda_t^{1}+\dots+\lambda_t^{k})=0 \right\}\right)\nonumber\\
    &+& \mathbb{P}\left(\Bigg\{\mathcal{T}=+\infty, \exists j\in\mathbb{N}^* ,\exists t\in[\tau_\epsilon^j, \sigma_\epsilon^j) : \lambda_t^{1}+\lambda_t^{2}+\dots+\lambda_t^{k} = 0 \Bigg\}\right) = 0. \nonumber
\end{eqnarray}
We have for  all $t\in[\tau_\epsilon^j,\sigma_\epsilon^j)$, $\lambda_t^{k+1}-\lambda_t^{k}>\frac{\epsilon}{2}$ and
\begin{eqnarray}
  d(\lambda^{1}_t+\dots+\lambda^k_t)  &=&  2\sqrt{\lambda^{1}_t+\dots+\lambda^k_t}dW^k_t-\gamma(\lambda^{1}_t+\dots+\lambda^k_t)dt+k(\alpha-(n-k)\beta)dt+2\beta\sum_{i=1}^k\lambda^i_t\sum_{j=k+1}^n\frac{1}{\lambda^i_t-\lambda_t^j}dt\nonumber \\
  &\geq&  2\sqrt{\lambda^{1}_t+\dots+\lambda^k_t}dW^k_t-\gamma(\lambda^{1}_t+\dots+\lambda^k_t)dt+k(\alpha-(n-k)\beta)dt-\frac{4}{\epsilon}\beta(n-k)\sum_{i=1}^k\lambda_t^i \nonumber\\
  &\geq&  2\sqrt{\lambda^{1}_t+\dots+\lambda^k_t}dW^k_t-\left(\gamma+\frac{4}{\epsilon}\beta(n-k)\right)(\lambda^{1}_t+\dots+\lambda^k_t)dt+k(\alpha-(n-k)\beta)dt. \label{kbiggerthancir}
  \end{eqnarray}

  We can then define on $\{\tau^j_\epsilon<\infty\}$ the process $ r^j$   by $r^j_0 =\left(\lambda^{1}_{\tau_\epsilon^j}+\dots+\lambda^k_{\tau_\epsilon^j}\right)\mathds{1}_{\{\tau^j_\epsilon<\mathcal{T}\}}+\mathds{1}_{\{\tau^j_\epsilon=\mathcal{T}\}} $ and  for all $t\geq0$ :
  \begin{eqnarray}
    d r^j_t &=&2\sqrt{ r^j_t}dW^k_{t+\tau_\epsilon^j}+\left[-\left(\gamma+\frac{4}{\epsilon}\beta(n-k)\right) r^j_t+k(\alpha-(n-k)\beta)\right]dt \nonumber\\
     &=&2\sqrt{ r^j_t}d(W^k_{t+\tau_\epsilon^j}-W^k_{\tau_\epsilon^j}+W^k_{\tau_\epsilon^j})+\left[-\left(\gamma+\frac{4}{\epsilon}\beta(n-k)\right) r^j_t+k(\alpha-(n-k)\beta)\right]dt \nonumber\\
     &=&2\sqrt{ r^j_t}d\tilde W^k_{t}+\left[-\left(\gamma+\frac{4}{\epsilon}\beta(n-k)\right) r^j_t+k(\alpha-(n-k)\beta)\right]dt \nonumber
  \end{eqnarray}
  where conditionally on $\{\tau^j_\epsilon<\infty\}$, by strong Markov property, $(\tilde W^k_t = W^k_{t+\tau_\epsilon^j}-W^k_{\tau_\epsilon^j})_{t\geq0}$ is a Brownian motion independent from $\mathcal{F}_{\tau_\epsilon^j}$.
Conditionally on $\{\tau^j_\epsilon<\infty\}$, the process  $ r^j$ is a CIR process defined globally in time  according to  Lemma \ref{lamberton} with $a=k(\alpha-(n-k)\beta)$ and $\sigma=2$ which satisfy $a\geq\frac{\sigma^2}{2}$, and it stays positive on $\mathbb{R}_+$.

 This together with (\ref{kbiggerthancir}) and Theorem \ref{Ikeda} give that for all $t\in[\tau_\epsilon^j, \sigma_\epsilon^j)$,
  \begin{equation}
      \lambda^{1}_t+\dots+\lambda^k_t\geq r_{t-\tau^j_\epsilon}^j. \nonumber
  \end{equation}
  
   We can thus conclude. 

\paragraph{Proof of Theorem \ref{existence}}

  As explained in the introduction, the main difficulty in proving this result comes from the fact that we have to deal with both singularities when a particle hits zero and when two particles collide at the same time. For $\epsilon>0$, our method precisely consists in separating these difficulties by defining two new SDEs (\ref{hatAeta}) and ($B_\epsilon$) which each remove one type of singularity and coincide with (\ref{eds}) on domains that cover $\{t\geq0: 0\leq\lambda^1_t\leq\dots\leq\lambda^n_t\text{ and } (\lambda^1_t\geq\epsilon\text{ or }\lambda^2_t-\lambda^1_t\geq\epsilon)\}$. This allows us to build a solution to (\ref{eds}) by piecing together solutions to (\ref{hatAeta}) and ($B_\epsilon$).
  
  Let us consider in this proof the Brownian motion $\mathbf{B}=(B^1_t,\dots,B^n_t)_{t\geq0}$, $\mathcal{F}_t=\sigma\left((\lambda^1_0,\dots,\lambda^n_0), \left(\mathbf{B}_s \right)_{s \le t} \right)$ for all $t\geq0$, and the SDE defined by (\ref{eds}).
  Let us define for all $\epsilon>0$ the following SDEs :
  
  \begin{equation}
  \label{hatAeta}
         d \hat\lambda^{i,\epsilon}_t = 2\sqrt{\hat\lambda^{i,\epsilon}_t}dB^i_t+\left[\alpha-(n-1)\beta+1-0\vee\frac{2\sqrt{2}}{\sqrt{\epsilon}}\left(\sqrt{\hat\lambda^{i,\epsilon}_t}-\frac{\sqrt{\epsilon}}{2\sqrt{2}}\right)\wedge1 -2\gamma\hat\lambda^{i,\epsilon}_t+2\beta\hat\lambda^{i,\epsilon}_t\sum_{j\neq i}\frac{1}{\hat\lambda^{i,\epsilon}_t-\hat\lambda^{j,\epsilon}_t}\right]dt \tag{$\hat{A} _\epsilon$}
  \end{equation}
  \begin{equation*}
      0\leq \hat\lambda^{1,\epsilon}_t<\dots< \hat\lambda^{n,\epsilon}_t, \text{ a.s.,  } dt-a.e..
  \end{equation*}

  \begin{equation}
      \label{Beta}
\left\{\begin{aligned}
    d\tilde\lambda^{1,\epsilon}_t &= 2\sqrt{\tilde\lambda^{1,\epsilon}_t}dB^1_t+(\alpha-(n-1)\beta)dt-2\gamma\tilde\lambda^{1,\epsilon}_tdt-2\beta\sum_{j\neq1}\frac{\tilde\lambda^{1,\epsilon}_t\wedge\epsilon}{(\tilde\lambda^{j,\epsilon}_t-\tilde\lambda^{1,\epsilon}_t\wedge\epsilon)\vee\epsilon}dt \\
\forall i\in\{2,\dots, n \}\text{, }  d \tilde\lambda^{i,\epsilon}_t &= 2\sqrt{\tilde\lambda^{i,\epsilon}_t}dB^i_t+(\alpha-(n-1)\beta+1)dt -2\gamma\tilde\lambda^{i,\epsilon}_t+2\beta\sum_{j\geq 2, j\neq i}\frac{\tilde\lambda^{i,\epsilon}_t}{\tilde\lambda^{i,\epsilon}_t-\tilde\lambda^{j,\epsilon}_t}dt\nonumber\\
   &-\left[0\vee\frac{2}{\sqrt{\epsilon}}\left(\sqrt{\tilde\lambda^{i,\epsilon}_t}-\frac{\sqrt{\epsilon}}{2}\right)\wedge1\right]dt+2\beta\frac{\tilde\lambda^{i,\epsilon}_t}{(\tilde\lambda^{i,\epsilon}_t-\tilde\lambda^{1,\epsilon}_t\wedge\epsilon)\vee\epsilon}dt. \nonumber
\end{aligned}\right.
\tag{$B_\epsilon$}
\end{equation}
\begin{equation*}
      0\leq \tilde\lambda^{1,\epsilon}_t \text{ and } 0\leq \tilde\lambda^{2,\epsilon}_t<\dots< \tilde\lambda^{n,\epsilon}_t, \text{ a.s.,  } dt-a.e..
  \end{equation*}

  These systems are built such as : 
  \begin{eqnarray}
    (\hat A_\epsilon) &\text{  coincides with (\ref{eds}) on } &\{t, \hat\lambda^{1,\epsilon}_t\geq\frac{\epsilon}{2}\}\nonumber\\
    (B_\epsilon) & \text{  coincides with (\ref{eds}) on } &\{t, \tilde\lambda^{1,\epsilon}_t\leq \epsilon \text{ and } \tilde\lambda^{2,\epsilon}_t-\tilde\lambda^1_t\geq\epsilon\} .\nonumber
  \end{eqnarray}

Lemmas \ref{Aeta} and \ref{Beta} give   existence of global pathwise unique strong solutions to $(\hat A_\epsilon)$ and $(B_\epsilon)$ with any  random initial condition with ordered non-negative coordinates and independent from the driving Brownian motion.

  For $\xi\in\mathbb{R}_+^n$ deterministic with ordered coordinates, let $\hat\Lambda^{\epsilon,T,\xi}$ denotes the process solution to $(\hat A_\epsilon)$ on $[T,+\infty)$ starting from  $\xi$  at time $T$   and equal to $0$ on $(-\infty,T)$. Likewise,  $\tilde\Lambda^{\epsilon,T,\xi}$ denotes the process solution to $(B_\epsilon)$ on $[T,+\infty)$ starting from $\xi$ at time  $T$  and equal to $0$ on $(-\infty,T)$.  We distinguish   two cases to define by induction a solution to (\ref{eds}):
 
  \begin{enumerate}
      \item  on $\boldsymbol{\{\lambda^1_0\geq\epsilon\}}$ :
       we define by induction
      \begin{eqnarray}
      \tau^{\epsilon}_0 &= &0;\nonumber\\
      \tau^{\epsilon}_1 &= & \inf\left\{t\geq 0 : \hat\lambda^{1,\epsilon,0, \Lambda_0}_t\leq\frac{\epsilon}{2}\right\};\nonumber \\
      \mathcal{X}^{(1)}_t & = &\hat\Lambda^{\epsilon,0,\Lambda_0}_t\mathds{1}_{\{0\leq t\leq\tau^\epsilon_1\}} \text{ for all }t\in\mathbb{R};\nonumber\\
      \tau^\epsilon_2 & = & \inf\left\{t\geq\tau^{\epsilon}_1 : \tilde\lambda^{1,\epsilon,\tau_1^\epsilon, \mathcal{X}^{(1)}_{\tau_1^\epsilon}}_{t}\geq\epsilon\right\};\nonumber\\
      \mathcal{X}^{(2)}_t & = &\tilde\Lambda^{\epsilon,\tau_1^\epsilon,\mathcal{X}^{(1)}_{\tau_1^\epsilon}}_t\mathds{1}_{\{\tau_1^\epsilon<t\leq\tau^\epsilon_2\}}\text{ for all }t\in\mathbb{R};\nonumber\\
       & \vdots & \nonumber\\
      \tau^\epsilon_{2i+1} & = & \inf\left\{t\geq\tau^{\epsilon}_{2i} : \hat\lambda^{1,\epsilon,\tau_{2i}^\epsilon, \mathcal{X}^{(2i)}_{\tau_{2i}^\epsilon}}_{t}\leq\frac{\epsilon}{2}\right\};\nonumber\\
      \mathcal{X}^{(2i+1)}_t & = &\hat\Lambda^{\epsilon,\tau_{2i}^\epsilon,\mathcal{X}^{(2i)}_{\tau_{2i}^\epsilon}}_t\mathds{1}_{\{\tau_{2i}^\epsilon<t\leq\tau^\epsilon_{2i+1}\}}\text{ for all }t\in\mathbb{R};\nonumber\\
      \tau^{\epsilon}_{2i+2} &= &\inf\left\{t\geq \tau^{\epsilon}_{2i+1} : \tilde\lambda^{1,\epsilon,\tau_{2i+1}^\epsilon, \mathcal{X}^{(2i+1)}_{\tau_{2i+1}^\epsilon}}_t\geq\epsilon \right\};\nonumber \\
      \mathcal{X}^{(2i+2)}_t & = &\tilde\Lambda^{\epsilon,\tau_{2i+1}^\epsilon,\mathcal{X}^{(2i+1)}_{\tau_{2i+1}^\epsilon}}_t\mathds{1}_{\{\tau_{2i+1}^\epsilon<t\leq\tau^\epsilon_{2i+2}\}} \text{ for all }t\in\mathbb{R};\nonumber\\
       & \vdots & \nonumber 
      \end{eqnarray}
      and as for all $i \in\mathbb{N}$, the $\tau_i^\epsilon$ defined before are stopping times for the filtration $(\mathcal{F}_t)_{t\ge0}$, the random vectors $\mathds{1}_{\{\tau^{\epsilon}_i<+\infty\}}\mathcal{X}_{\tau_i^\epsilon}$ are $\mathcal{F}_{\tau_i^\epsilon}$-measurable, the construction makes sense.
      \item on $\boldsymbol{\{\lambda^1_0<\epsilon\}}$ :
      we define by induction
      \begin{eqnarray}
      \sigma^{\epsilon}_0 &= &0;\nonumber\\
      \sigma^{\epsilon}_1 &= & \inf\left\{t\geq 0 : \tilde\lambda^{1,\epsilon,0, \Lambda_0}_t\geq\epsilon\right\};\nonumber \\
      \mathcal{Y}^{(1)}_t & = &\tilde\Lambda^{\epsilon,0,\Lambda_0}_t\mathds{1}_{\{0\leq t\leq\sigma^\epsilon_1\}} \text{ for all }t\in\mathbb{R};\nonumber\\
      \sigma^\epsilon_2 & = & \inf\left\{t\geq \sigma^{\epsilon}_1 : \hat\lambda^{1,\epsilon,\sigma_{1}^\epsilon, \mathcal{Y}^{(1)}_{\sigma_{1}^\epsilon}}_t\leq\frac{\epsilon}{2} \right\};\nonumber\\
      \mathcal{Y}^{(2)}_t & = &\hat\Lambda^{\epsilon,\sigma_{1}^\epsilon,\mathcal{Y}^{(1)}_{\sigma_1^\epsilon}}_t\mathds{1}_{\{\sigma_{1}^\epsilon<t\leq\sigma^\epsilon_2\}}\text{ for all }t\in\mathbb{R};\nonumber\\
       & \vdots & \nonumber\\
      \sigma^\epsilon_{2i+1} & = & \inf\left\{t\geq  \sigma^{\epsilon}_{2i} : \tilde\lambda^{1,\epsilon,\sigma_{2i}^\epsilon, \mathcal{Y}^{(2i)}_{\sigma_{2i}^\epsilon}}_t\geq\epsilon \right\};\nonumber\\
      \mathcal{Y}^{(2i+1)}_t & = &\tilde\Lambda^{\epsilon,\sigma_{2i}^\epsilon,\mathcal{Y}^{(2i)}_{\sigma_{2i}^\epsilon}}_t\mathds{1}_{\{\sigma_{2i}^\epsilon<t\leq\sigma^\epsilon_{2i+1}\}}\text{ for all }t\in\mathbb{R};\nonumber\\
      \sigma^{\epsilon}_{2i+2} &= &\inf\left\{t\geq \sigma^{\epsilon}_{2i+1} : \hat\lambda^{1,\epsilon,\sigma_{2i+1}^\epsilon, \mathcal{Y}^{(2i+1)}_{\sigma_{2i+1}^\epsilon}}_{t}\leq\frac{\epsilon}{2}\right\};\nonumber \\
      \mathcal{Y}^{(2i+2)}_t & = &\hat\Lambda^{\epsilon,\sigma_{2i+1}^\epsilon,\mathcal{Y}^{(2i+1)}_{\sigma_{2i+1}^\epsilon}}_t\mathds{1}_{\{\sigma_{2i+1}^\epsilon<t\leq\sigma^\epsilon_{2i+2}\}} \text{ for all }t\in\mathbb{R};\nonumber\\
       & \vdots & \nonumber 
      \end{eqnarray}
       and as for all $i \in\mathbb{N}$, the $\sigma^\epsilon_i$ defined before are stopping times the filtration $(\mathcal{F}_t)_{t\ge0}$, the random vectors  $\mathds{1}_{\{\sigma^{\epsilon}_i<+\infty\}}\mathcal{Y}_{\sigma_i^\epsilon}$ are $\mathcal{F}_{\sigma_i^\epsilon}$-measurable, the construction makes sense.
  \end{enumerate}
  We finally define for all $\epsilon>0$ and $t\geq0$   :
   \begin{eqnarray}
      \label{presol}
      \mathcal{Z}^\epsilon_t &=& \left(\sum_{i=1}^{+\infty}\mathcal{X}^{(i)}_{t}\right)\mathds{1}_{\{\lambda^{\epsilon,1}_0\geq\epsilon\}}+\left(\sum_{i=1}^{+\infty}\mathcal{Y}^{(i)}_{t}\right)\mathds{1}_{\{\lambda^{\epsilon,1}_0<\epsilon\}}.\nonumber
  \end{eqnarray} 

   When $\lambda^1_0\geq\epsilon$ , for all $i\in\mathbb{N}$ and for $t\in[\tau^\epsilon_{2i+1},\tau^\epsilon_{2i+2})$, the equation for the smallest coordinate in $(B_\epsilon)$ and the non-negativity of $\tilde\lambda^{1,\epsilon,\tau_{2i+1}^\epsilon, \mathcal{X}^{(2i+1)}_{\tau_{2i+1}^\epsilon}}_t$ gives  
 \begin{eqnarray}
   d\tilde\lambda^{1,\epsilon,\tau_{2i+1}^\epsilon, \mathcal{X}^{(2i+1)}_{\tau_{2i+1}^\epsilon}}_t
   &\leq & 2\sqrt{\tilde\lambda^{1,\epsilon,\tau_{2i+1}^\epsilon, \mathcal{X}^{(2i+1)}_{\tau_{2i+1}^\epsilon}}_t}dB^1_t+(\alpha-(n-1)\beta)dt.\label{montees}
 \end{eqnarray}

 Then, according to the pathwise comparison theorem of Ikeda and Watanabe (that we recall in Theorem \ref{Ikeda} below), for all $i\in\mathbb{N}$ and for all $t\in[\tau^\epsilon_{2i+1},\tau^\epsilon_{2i+2})$ 
 \begin{eqnarray*}
     \tilde\lambda^{1,\epsilon,\tau_{2i+1}^\epsilon, \mathcal{X}^{(2i+1)}_{\tau_{2i+1}^\epsilon}}_t\leq r^{2i+2}_{t-\tau^{\epsilon}_{2i+1}}
 \end{eqnarray*}
 where for all $t\geq 0$
 \begin{equation*}
     r^{2i+2}_t = \frac{\epsilon}{2}+2\int_{\tau^\epsilon_{2i+1}}^{\tau^\epsilon_{2i+1}+t}\sqrt{r_s}dB^1_s+(\alpha-(n-1)\beta)t,
 \end{equation*}
 which is a CIR process. When $\lambda^1_0<\epsilon$, the same kind of comparison can be made on $[\sigma^\epsilon_{2i},\sigma^\epsilon_{2i+1})$ with a CIR process that we will call $r^{2i+1}$ for all $i\in\mathbb{N}^*$.
 
 For $\lambda^1_0\ge\epsilon$, the times $\tau^\epsilon_{2i+2}-\tau^\epsilon_{2i+1}$ for all $i\in\mathbb{N}$  are larger than the time interval for the  CIR processes  $r^{2i+2}$  to go from $\frac{\epsilon}{2}$ to $\epsilon$. Moreover, the times for the $r^{2i+2}$ processes to go from $\frac{\epsilon}{2}$ to $\epsilon$ are iid positive random variables. Consequently, there is no accumulation of the stopping times  $\tau^j_\epsilon$ which go to infinity as $j\rightarrow\infty$. The same argument applies when $\lambda^1_0<\epsilon$ with the $\sigma^j_\epsilon$ which also go to infinity as $j\rightarrow\infty$.
   
   The stochastic process $\mathcal{Z}^\epsilon$ is thus defined globally in time.

   We recall that ($\hat A_\epsilon$) and ($B_\epsilon$) respectively coincide with ($\ref{eds}$) when $\hat\lambda^{1,\epsilon}_t\geq\frac{\epsilon}{2}$ and when $\tilde\lambda^{1,\epsilon}_t\leq\epsilon$ and $\tilde\lambda^{2,\epsilon}_t-\tilde\lambda^{1,\epsilon}_t\geq\epsilon$. On the other hand, when $\lambda^1_0\geq\epsilon$, on $[\tau^\epsilon_{2i},\tau^\epsilon_{2i+1}]$, $\mathcal{Z}^\epsilon$ evolves according to ($\hat A_\epsilon$) and $\mathcal{Z}^{1,\epsilon}\geq\frac{\epsilon}{2}$ while on $[\tau^\epsilon_{2i+1},\tau^\epsilon_{2i+2}]$, $\mathcal{Z}^\epsilon$ evolves according to ($B_\epsilon$) and $\mathcal{Z}^{1,\epsilon}\leq\epsilon$.  By induction on $i$ we deduce that $\mathcal{Z}^\epsilon$ is a solution to (\ref{eds}) until
   
   \[\inf\left\{t\in\bigcup_{i\in\mathbb{N}}[\tau^\epsilon_{2i+1},\tau^\epsilon_{2i+2}], \mathcal{Z}^{2,\epsilon}_t-\mathcal{Z}^{1,\epsilon}_t\leq\epsilon\right\}\geq \inf\{t\geq0 :  \mathcal{Z}^{1,\epsilon}_t\leq\epsilon \text{ and }\mathcal{Z}^{2,\epsilon}_t-\mathcal{Z}^{1,\epsilon}_t\leq\epsilon\} =:\zeta_\epsilon.\]  Likewise, when $\lambda^1_0<\epsilon$, $\mathcal{Z}^\epsilon$ is a solution to (\ref{eds}) until
   \[\inf\left\{t\in\bigcup_{i\in\mathbb{N}}[\sigma^\epsilon_{2i},\sigma^\epsilon_{2i+1}], \mathcal{Z}^{2,\epsilon}_t-\mathcal{Z}^{1,\epsilon}_t\leq\epsilon\right\}\geq\inf\{t\geq0 :  \mathcal{Z}^{1,\epsilon}_t\leq\epsilon \text{ and }\mathcal{Z}^{2,\epsilon}_t-\mathcal{Z}^{1,\epsilon}_t\leq\epsilon\} =: \zeta_\epsilon.\] 
   
  From Lemmas \ref{Aeta} and \ref{Beta}, we have when $\lambda^1_0\geq\epsilon$ :
   \begin{equation}
        \mathbb{P}\{\exists i\in\mathbb{N},\exists t\in(\tau_{2i}^\epsilon, \tau_{2i+1}^\epsilon]:\mathcal{Z}^{i,\epsilon}_t=\mathcal{Z}^{i+1,\epsilon}_t \text{ and } \mathcal{Z}^{j,\epsilon}_t=\mathcal{Z}^{j+1,\epsilon}_t \text{ for } 0\leq i<j\leq n-1\}=0\label{iiiii}
    \end{equation}
    and
   \begin{eqnarray}
     \label{iiiiii}
  \mathbb{P}\{\exists i\in\mathbb{N},\exists t\in(\tau_{2i+1}^\epsilon, \tau_{2i+2}^\epsilon]:\mathcal{Z}^{i,\epsilon}_t=\mathcal{Z}^{i+1,\epsilon}_t \text{ and } \mathcal{Z}^{j,\epsilon}_t=\mathcal{Z}^{j+1,\epsilon}_t \text{ for } 2\leq i<j\leq n-1\}=0
\end{eqnarray}
where by convention $\mathcal{Z}^{0,\epsilon}\equiv0$.

On the time intervals $[\tau_{2i+1}^\epsilon\wedge\zeta_\epsilon, \tau_{2i+2}^\epsilon\wedge\zeta_\epsilon]$, we have $ \mathcal{Z}^{2,\epsilon}_t-\mathcal{Z}^{1,\epsilon}_t\geq\epsilon$. This together with (\ref{iiiii}-\ref{iiiiii}) allows to conclude the proof of $(iii)$ when $\lambda^1_0\geq\epsilon$. The same reasoning can be made when $\lambda^1_0<\epsilon$.
Consequently,
\begin{equation}
\label{Zmultiplecol}
    \mathbb{P}\Big\{\exists t\in (0,\zeta_\epsilon]: \mathcal{Z}^{i,\epsilon}_t = \mathcal{Z}^{i+1,\epsilon}_t \text{ and } \mathcal{Z}^{j,\epsilon}_t = \mathcal{Z}^{j+1,\epsilon}_t \text{ for some }1\leq i<j\leq n-1\Big\}=0
\end{equation}

   As the solutions to equation (\ref{eds}) are pathwise unique (see Lemma \ref{Beta}), for $n\in\mathbb{N}^*$, the processes $\mathcal{Z}^{\frac{1}{n}}$ and $\mathcal{Z}^{\frac{1}{n+1}}$ coincide on $\left[0,\zeta_{\frac{1}{n}}\wedge\zeta_{\frac{1}{n+1}}\right]$. Thus, $\zeta_{\frac{1}{n}}\wedge\zeta_{\frac{1}{n+1}}=\zeta_{\frac{1}{n}}$ and the sequence $(\zeta_{\frac{1}{n}})_{n\in\mathbb{N}^*}$  is non-decreasing. Moreover, for all $n\in\mathbb{N}^*$, $\mathcal{Z}^{\frac{1}{n}}$  verifies (\ref{Zmultiplecol}). Consequently,  we can define for all $t\in[0,\underset{\epsilon\rightarrow0}{\lim}\zeta_\epsilon)$
  \begin{equation}
  \label{solution}
      \Lambda_t =\mathcal{Z}^1_{t}\mathds{1}_{\left\{0\leq t\leq\zeta_{1}\right\}}+ \sum_{n\geq1}\mathcal{Z}^{\frac{1}{n+1}}_t\mathds{1}_{\left\{\zeta_{\frac{1}{n}}<t\leq\zeta_{\frac{1}{n+1}}\right\}}
  \end{equation}
  which is a solution to SDE (\ref{eds}) on $[0,\underset{\epsilon\rightarrow0}{\lim}\zeta_\epsilon)$ verifying $(iii)$ of Theorem \ref{existence}.

 Finally,
   as the solutions to (\ref{eds}) are pathwise unique   (Lemma \ref{pathwise_uniqueness}), we can apply the Yamada-Watanabe theorem (see for instance \cite[Theorem 1.7 p368]{MR1725357}) to deduce the existence of strong solutions to the equation.
  Lemma \ref{4} shows that the condition 
  \begin{equation*}
    \text{ for all }t\geq0, \sum_{i=1}^{n-1}\int_0^t\frac{\lambda^{i+1}_s}{\lambda^{i+1}_s-\lambda^{i}_s}ds<\infty \text{ a.s..}
\end{equation*}
  is satisfied.

\paragraph{}
  
   Since on $\{\underset{\epsilon\rightarrow0}{\lim}\zeta_\epsilon<+\infty\}$ we have $\lambda^1_{\zeta_\epsilon}+\lambda^2_{\zeta_\epsilon}= 2\lambda^1_{\zeta_\epsilon}+\lambda^2_{\zeta_\epsilon}-\lambda^1_{\zeta_\epsilon}\leq3\epsilon$, $\underset{t\in[0,\underset{\epsilon\rightarrow0}{\lim}\zeta_\epsilon)}{\inf}\lambda^1_t+\lambda^2_t=0$,
   we then use Proposition \ref{multiplecollision} $(ii)$ with $k=2$ to conclude that $\underset{\epsilon\rightarrow0}{\lim}\zeta_\epsilon=+\infty$ when $\alpha-(n-1)\beta\geq1-\beta$ which is $(i)$ from Theorem \ref{existence}.

   For  $\alpha-(n-1)\beta<1-\beta$ and $\gamma\geq0$, let us prove asumption $(ii)$ of Theorem \ref{existence}. 
   Following the steps of the proof of Proposition \ref{multiplecollision} with $k=2$ until (\ref{aaaaa}), we have for all $0\leq t < \underset{\epsilon\rightarrow0}{\lim}\zeta_\epsilon$ :
   \begin{eqnarray}
  d(\lambda^{1}_t+\lambda^2_t)  &\leq& 2\sqrt{\lambda^{1}_t+\lambda^2_t}dW^2_t-2\gamma(\lambda^{1}_t+\lambda^2_t)dt+2(\alpha-(n-2)\beta)dt\nonumber
  \end{eqnarray}
  where by Lévy's characterization, $W^2$ defined by
  $W^2_0=0$ and  \\$dW^2_t=\mathds{1}_{\left\{0\leq t<\underset{\epsilon\rightarrow0}{\lim}\zeta_\epsilon\right\}}\frac{\sqrt{\lambda^1_t}dB^1_t+\sqrt{\lambda^2_t}dB^2_t}{\sqrt{\lambda^1_t+\lambda^2_t}}+\mathds{1}_{\left\{t\geq\underset{\epsilon\rightarrow0}{\lim}\zeta_\epsilon\right\}}\frac{dB^1_t+dB^2_t}{\sqrt{2}}$ is a Brownian motion.

By the pathwise comparison theorem of Ikeda and Watanabe (that we recall in Theorem \ref{Ikeda} below),
$$\lambda^1_t+\lambda^2_t\leq r_t \text{ for all } 0\leq t < \underset{\epsilon\rightarrow0}{\lim}\zeta_\epsilon \text{ a.s.}$$ where  for all $t\geq0$
\begin{equation}
\label{r}
    r_t= \lambda^1_0+\lambda^2_0+2\int_0^t\sqrt{r_s}dW^2_s-2\gamma \int_0^tr_sds+2(\alpha-(n-2)\beta)t
\end{equation}

 is a CIR process. Applying  Lemma \ref{lamberton} with $a=2(\alpha-(n-2)\beta)$, $b=2\gamma$ and $\sigma=2$ which satisfy $a<\frac{\sigma^2}{2}$ and $b\geq0$, we  conclude that the hitting time of $0$ by $r$ is finite almost surely. Consequently,   $\mathbb{P}\left(\underset{\epsilon\rightarrow0}{\lim}\zeta_\epsilon=+\infty\right)=0$ which concludes the proof of Theorem \ref{existence} $(ii)$.

  \begin{lemma}
  \label{Aeta}
  Let us assume $\alpha-(n-1)\beta>0$.
  The SDE
   \begin{equation}
  \label{hatAeta}
         d \hat\lambda^{i,\epsilon}_t = 2\sqrt{\hat\lambda^{i,\epsilon}_t}dB^i_t+\left[\alpha-(n-1)\beta+1-0\vee\frac{2\sqrt{2}}{\sqrt{\epsilon}}\left(\sqrt{\hat\lambda^{i,\epsilon}_t}-\frac{\sqrt{\epsilon}}{2\sqrt{2}}\right)\wedge1 -2\gamma\hat\lambda^{i,\epsilon}_t+2\beta\hat\lambda^{i,\epsilon}_t\sum_{j\neq i}\frac{1}{\hat\lambda^{i,\epsilon}_t-\hat\lambda^{j,\epsilon}_t}\right]dt\tag{$\hat{A} _\epsilon$}
  \end{equation}
  \[0\leq \hat\lambda^{1,\epsilon}_t<\dots< \hat\lambda^{n,\epsilon}_t, \text{ a.s., } dt-a.e.\]
  
    has a global pathwise unique strong solution $(\hat{\lambda}^{1,\epsilon}_t,\dots,\hat{\lambda}^{1,\epsilon}_t)_{t\geq0}$ starting from any random  initial condition $\Lambda_0=(\lambda^1_0,\dots,\lambda^n_0)$ independent from $\mathbf{B}$ such that $0\leq \lambda^1_0\leq\dots\leq\lambda^n_0$ a.s..
    
    Moreover,
    \begin{equation}
    \label{AetaNonMultipleCollision}
        \mathbb{P}\{\exists t>0:\hat\lambda^{i,\epsilon}_t=\hat\lambda^{i+1,\epsilon}_t \text{ and } \hat\lambda^{j,\epsilon}_t=\hat\lambda^{j+1,\epsilon}_t \text{ for } 0\leq i<j\leq n-1\}=0
    \end{equation}
    where by convention $\hat\lambda^{0,\epsilon}\equiv0$.
  \end{lemma}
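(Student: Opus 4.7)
The plan is to transfer $(\hat A_\epsilon)$ to a gradient-type SDE via the square-root change of variable, apply Cépa's multivalued SDE framework for existence and pathwise uniqueness, and then establish (\ref{AetaNonMultipleCollision}) by a CIR-comparison argument modelled on Proposition \ref{multiplecollision}(ii).

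\textbf{Reduction to a gradient SDE.} Setting $\hat x^{i,\epsilon}_t := \sqrt{\hat\lambda^{i,\epsilon}_t}$ and applying Itô's formula formally (compare (\ref{sqrteds})--(\ref{sqrtedsdual})), one obtains
\begin{equation*}
d\hat x^{i,\epsilon}_t = dB^i_t + \left[\frac{\alpha-(n-1)\beta - c(\hat x^{i,\epsilon}_t)}{2\hat x^{i,\epsilon}_t} - \gamma\hat x^{i,\epsilon}_t + \frac{\beta}{2}\sum_{j\ne i}\left(\frac{1}{\hat x^{i,\epsilon}_t - \hat x^{j,\epsilon}_t} + \frac{1}{\hat x^{i,\epsilon}_t + \hat x^{j,\epsilon}_t}\right)\right]dt,
\end{equation*}
with $c(x) = 0 \vee \frac{2\sqrt 2}{\sqrt\epsilon}\bigl(x - \frac{\sqrt\epsilon}{2\sqrt 2}\bigr) \wedge 1 \in [0,1]$. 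This takes the gradient form $d\hat x^{i,\epsilon}_t = dB^i_t - \partial_i V^\epsilon(\hat x^\epsilon_t)dt$, where $V^\epsilon$ equals the potential $V$ of (\ref{potentiam}) plus a correction $\sum_i F^\epsilon(x^i)$ with $(F^\epsilon)'(x) = \frac{c(x)-1}{2x}$. The key feature is that near $x^i = 0$, where $c$ vanishes, the effective coefficient in front of $-\ln|x^i|$ in $V^\epsilon$ is $\frac{\alpha-(n-1)\beta}{2} > 0$, turning the boundary log-singularity into a convex barrier; together with the repulsive log-barriers $-\frac{\beta}{4}\sum_{j\ne i}(\ln|x^i - x^j| + \ln|x^i + x^j|)$ this supplies the convex part of $V^\epsilon$ on the closed Weyl chamber $\overline{C} = \{0 \le x^1 \le \dots \le x^n\}$. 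The quadratic term $\frac{\gamma}{2}\sum_i(x^i)^2$ together with the bounded $C^1$ portion of $F^\epsilon$ forms a Lipschitz perturbation on bounded subsets.

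\textbf{Existence and back-transformation.} This convex-plus-Lipschitz decomposition fits Cépa's multivalued SDE framework (\cite{MR1459451}; as in \cite{MR1440140,MR2566989}), yielding a global pathwise unique strong solution $\hat x^\epsilon$ valued in $\overline{C}$. Squaring gives a continuous semimartingale $\hat\lambda^{i,\epsilon}_t := (\hat x^{i,\epsilon}_t)^2$; an Itô computation, combined with the absence of semi-martingale local time at zero for square-root diffusions (\cite[Lemma 3.3]{MR1725357}) and Lemma \ref{4} to control drift integrability, recovers a global strong solution to $(\hat A_\epsilon)$. Pathwise uniqueness for $(\hat A_\epsilon)$ itself can alternatively be verified directly by the Tanaka-formula argument of Lemma \ref{pathwise_uniqueness}: the cutoff $c$ is Lipschitz and contributes only a Grönwall-type correction, while the monotone sign computation (\ref{sgncomputation}) is unaffected.

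\textbf{No simultaneous collisions and main obstacle.} For (\ref{AetaNonMultipleCollision}) I adapt the backward induction of Proposition \ref{multiplecollision}(ii), exploiting the enhanced drift of $(\hat A_\epsilon)$. For any $0 \le i < j \le n-1$, localizing on intervals where $\hat\lambda^{j+1,\epsilon}_t - \hat\lambda^{j,\epsilon}_t \ge \eta > 0$ controls the interaction between the two groups of particles, and an appropriate partial sum $\hat\lambda^{1,\epsilon}_t+\dots+\hat\lambda^{k,\epsilon}_t$ dominates a CIR process whose additive drift constant is enhanced by $+1$ per particle in the regime where $c$ vanishes, raising the effective constant above the Feller threshold $\sigma^2/2 = 2$. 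Iterating over excursions as in Proposition \ref{multiplecollision}(ii) and sending $\eta \to 0$ yields the claim, the case $i = 0$ being handled via the convention $\hat\lambda^{0,\epsilon} \equiv 0$. The main obstacle is verifying the convex-plus-Lipschitz splitting of $V^\epsilon$ precisely in the regime $\alpha - (n-1)\beta \in (0,1]$; a careful treatment of the transitional zone $\{\frac{\sqrt\epsilon}{2\sqrt 2} \le x \le \frac{\sqrt\epsilon}{\sqrt 2}\}$, where $c$ is neither $0$ nor $1$, is required to isolate the convex log-boundary contribution from the smooth remainder. An alternative route, which may in fact be the one taken, bypasses Cépa by approximating the interaction kernels $\frac{1}{\hat\lambda^i - \hat\lambda^j}$ by a Lipschitz regularization, solving the approximating SDEs via standard theory, and passing to the limit using the CIR moment bound on $\sum_i \hat\lambda^{i,\epsilon}_t$ from (\ref{sumcir}) together with pathwise uniqueness.
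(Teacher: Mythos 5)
For the existence and pathwise uniqueness part your plan matches the paper's: pass to the square-root coordinates, decompose the drift into a convex potential whose boundary log-coefficient is promoted to $\tfrac{\alpha-(n-1)\beta}{2}>0$ near zero (thanks to $c$ vanishing there) plus a globally Lipschitz remainder, invoke Cépa's multivalued SDE theory to get a reflected solution, and then show the local time at the boundary vanishes so the unreflected SDE is solved; square back and conclude with Lemma~\ref{pathwise_uniqueness}/Yamada--Watanabe. The paper implements exactly this by defining $\Phi_\gamma$ with the full $\tfrac{\alpha-(n-1)\beta}{2}\ln$-term and moving the $\tfrac{c(x)-1}{2x}$ piece into a globally Lipschitz function $g$; your $V+\sum_i F^\epsilon$ decomposition is equivalent. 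The ``main obstacle'' you worry about is not actually one: $g^i(x)=-\tfrac{c(x^i)}{2x^i}$ is bounded and Lipschitz on all of $\mathbb{R}_+$ because $c$ vanishes linearly near $0$ and is constant equal to $1$ far from $0$, so the transitional zone causes no trouble and no delicate splitting argument in the regime $\alpha-(n-1)\beta\in(0,1]$ is needed.

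The argument you propose for \eqref{AetaNonMultipleCollision}, however, has a genuine gap and is not the paper's. A partial-sum CIR comparison of the type used in Proposition~\ref{multiplecollision}(ii) only detects the event that the first $k$ particles hit zero simultaneously; it says nothing about two \emph{interior} gaps $\hat\lambda^{i+1,\epsilon}-\hat\lambda^{i,\epsilon}$ and $\hat\lambda^{j+1,\epsilon}-\hat\lambda^{j,\epsilon}$ closing at the same time at positive values of the particles (the case $1\le i<j\le n-1$ in the statement), because the partial sum $\hat\lambda^{1,\epsilon}+\dots+\hat\lambda^{k,\epsilon}$ stays bounded away from zero there. So your localization/excursion scheme can at best cover the $i=0$ case, and even there the claimed ``$+1$ per particle'' only helps while every one of the summed coordinates lies below the threshold where $c$ vanishes, which is not where the CIR comparison must hold. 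The paper instead localizes at $\tau_M$ and uses a Girsanov change of measure to remove the bounded Lipschitz drift $g$ and the $\gamma$-term; under the equivalent measure the square-root process solves the $\Phi_0$-gradient SDE on the Weyl chamber with no cutoff, to which Lépingle's \cite[Theorem~3.1]{MR2792586} applies directly and excludes \emph{all} multiple collisions (including two interior ones and zero plus an interior one), and then pathwise uniqueness and $\mathbb{Q}\sim\mathbb{P}$, followed by $M\to\infty$, transfer the conclusion back. You should replace the CIR-comparison sketch with this Girsanov--Lépingle argument; as written, the interior-double-collision case is not addressed.
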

  
  \begin{proof}
  Let us consider  $\mathcal{F}_t=\sigma\left((\sqrt{\lambda^1_0},\dots,\sqrt{\lambda^n_0}), \left(\mathbf{B}_s \right)_{s \le t} \right)$    and the SDE defined by 
  \begin{equation}
  \label{Aetaeq}
     dx^{i,\epsilon}_t  =  dB^i_t +\frac{\alpha-(n-1)\beta-0\vee\frac{2\sqrt{2}}{\sqrt{\epsilon}}\left(x^{i,\epsilon}_t-\frac{\sqrt{\epsilon}}{2\sqrt{2}}\right)\wedge 1}{2x^{i,\epsilon}_t}dt-\gamma x^{i,\epsilon}_tdt+\beta x^{i,\epsilon}_t\sum_{j\neq i}\frac{dt}{(x^{i,\epsilon}_t)^2-(x^{j,\epsilon}_t)^2}\text{ for }i\in\{1,\dots,n\}\tag{$A_\epsilon$}
    \end{equation}
    \[0\leq x^{1,\epsilon}_t<\dots< x^{n,\epsilon}_t, \text{ }a.s., dt-a.e.\nonumber\]
    with  random initial condition $(\sqrt{\lambda^1_0},\dots,\sqrt{\lambda^n_0})$ such that $0\leq \sqrt{\lambda^1_0}\leq\dots\leq \sqrt{\lambda^n_0}$.

  We are going to apply  Cepa's multivoque equations theory (\cite{MR1459451}) to conclude that there exists a unique strong solution to (\ref{Aetaeq}). 
  
  To do so, we define
  \begin{eqnarray}
  D&=&\{0<x^1<x^2<\dots<x^n\}\nonumber\\
  \Phi_\gamma  &:&  (x^1,\dots,x^n)\in \mathbb{R}^n \rightarrow 
\left\{
\begin{array}{c }\displaystyle
    -\sum_{i=1}^n\left[\frac{\alpha-(n-1)\beta}{2}\ln |x^i|-\frac{\gamma}{2} (x^i)^2+\frac{\beta}{4}\sum_{j\ne i}\left(\ln| x^{i}-x^{j}| + \ln| x^{i}+x^{j}|\right) \right]\text{ if } x\in D \\
    +\infty \text{ if } x\notin D 
\end{array}
\right.\nonumber\\
g &:&  (x^1,\dots,x^n)\in \mathbb{R}^n \rightarrow 
    \left(-\frac{0\vee\frac{2\sqrt{2}}{\sqrt{\epsilon}}\left(x^{1,\epsilon}_t-\frac{\sqrt{\epsilon}}{2\sqrt{2}}\right)\wedge 1}{2x^{1,\epsilon}_t},\dots,-\frac{0\vee\frac{2\sqrt{2}}{\sqrt{\epsilon}}\left(x^{n,\epsilon}_t-\frac{\sqrt{\epsilon}}{2\sqrt{2}}\right)\wedge 1}{2x^{n,\epsilon}_t}\right) \nonumber
  \end{eqnarray}
  to rewrite the system of SDE on D with $X=(x^{1,\epsilon},\dots,x^{n,\epsilon})$  the following way
  
  \begin{equation}
 dX_t  =  d\mathbf{B}_t+g(X_t)dt-\nabla\Phi_\gamma(X_t)dt.\tag{$A_\epsilon$}
  \end{equation}
  Since $g$ is globally Lipschitz and $\Phi_\gamma$ is convex, Cépa's multivoque equations theory shows existence and uniqueness of a strong solution to equation
 
  \begin{align}
   d\tilde X_t & =  d\mathbf{B}_t+g(\tilde X_t)dt-\nabla\Phi_\gamma(\tilde X_t)dt-\nu(\tilde X_t)dL_t \text{ for all  }t\geq0\label{tildeAeta}\tag{$\tilde A_\epsilon$}\\
   & \forall t\geq0,\tilde X_t\in \Bar D \text{ a.s.}\nonumber\\
   &\tilde X_0=(\sqrt{\lambda^1_0},\dots,\sqrt{\lambda^n_0})\nonumber
  \end{align}
  where $\tilde X$  is a continuous adapted to $(\mathcal{F}_t)_{t\geq0}$ process,   $L$ is a continuous non-decreasing adapted to $(\mathcal{F}_t)_{t\geq0}$ process with $L_0=0$  verifying
  \begin{equation}
      L_t= \int_0^t\mathds{1}_{\{\tilde X_s\in\partial D\}}dL_s,\nonumber
  \end{equation}
  and
   $\nu(x)\in\pi(x)$ ($\pi(x)$ is the set of unitary outward normals to $\partial D$ at $x\in\partial D$). The solution to equation (\ref{tildeAeta}) follows the conditions : for all $t>0$ 
  \begin{eqnarray}
  \mathbb{E}\left[\int_0^t\mathds{1}_{\{\tilde X_s\in\partial D\}}ds\right] & = & 0,\nonumber\\
  \mathbb{E}\left[\int_0^t|\nabla\Phi_\gamma(\tilde X_s)|ds\right] & < & \infty.\nonumber
  \end{eqnarray}

  We apply \cite[Theorem 2.2]{MR1875671} which is an application of Cépa's multivoque equations theory to this kind of SDE and the remark following \cite[Theorem 3.1]{MR1459451} to deduce that $(\tilde A_\epsilon)$  has a unique strong solution. Let us now prove that the boundary process $L$ is equal to zero.

 For all $m\in\{1,\dots,n\}$, for all $t\geq0$, we have  with $C = \frac{\alpha-(n-1)\beta}{2}$  :
 \begin{equation}
    \label{H}
     \int_0^t\left|\frac{C}{x^{m,\epsilon}_s}+\beta x^{m,\epsilon}_s\sum_{j\neq m}\frac{1}{(x^{m,\epsilon}_s)^2-(x^{j,\epsilon}_s)^2}\right|ds<\infty
 \end{equation}
Let us prove by  backward induction on $m$ that 
\begin{equation}
    \label{Hrec}
    \text{for all $ 1 <m\leq n$ and for all $t\geq0$, }
    \int_0^t\left|\frac{1}{x^{m,\epsilon}_s}\right|+\sum_{l<m}\left|\frac{x^{m,\epsilon}_s}{(x^{m,\epsilon}_s)^2-(x^{l,\epsilon}_s)^2}\right|ds  <\infty \tag{$H_m$}
\end{equation}
\begin{itemize}
    \item $m=n$
\end{itemize}
As all the terms in the absolute value of (\ref{H})  have the same sign   for $m=n$,  we  deduce the individual integrability.

\begin{itemize}
    \item Let $1<m\leq n$ and let us assume ($H_j$) for all $j\in\{m+1,\dots,n\}$.
\end{itemize}
We have :
\begin{eqnarray}
  \frac{C}{x^{m,\epsilon}_s}+\beta x^{m,\epsilon}_s\sum_{j\neq m}\frac{1}{(x^{m,\epsilon}_s)^2-(x^{j,\epsilon}_s)^2}&=&\frac{C}{x^{m,\epsilon}_s}+\beta x^{m,\epsilon}_s\sum_{j< m}\frac{1}{(x^{m,\epsilon}_s)^2-(x^{j,\epsilon}_s)^2}-\beta \sum_{j>m}\frac{x^{m,\epsilon}_s}{(x^{j,\epsilon}_s)^2-(x^{m,\epsilon}_s)^2}\nonumber
\end{eqnarray}

Let us remark that for all $s\geq0$
\begin{eqnarray}
\label{aa}
  \frac{C}{x^{m,\epsilon}_s}+\beta x^{m,\epsilon}_s\sum_{j<m}\frac{1}{(x^{m,\epsilon}_s)^2-(x^{j,\epsilon}_s)^2}\leq\left|\frac{C}{x^{m,\epsilon}_s}+\beta x^{m,\epsilon}_s\sum_{j\neq m}\frac{1}{(x^{m,\epsilon}_s)^2-(x^{j,\epsilon}_s)^2}\right|+\beta \sum_{j>m}\frac{x^{j,\epsilon}_s}{(x^{j,\epsilon}_s)^2-(x^{m,\epsilon}_s)^2}
\end{eqnarray}
by the triangle inequality and since $x^{j,\epsilon}_s\geq x^{m,\epsilon}_s$ for $j>m$.
 
By (\ref{H}) and the induction hypothesis for $j\in\{m+1,\dots,n\}$, each term in the right-hand side  of (\ref{aa}) is integrable, which ends the induction argument.

 Consequently,  for all $1\leq l <m\leq n$  and for all $t\geq0$  we have
  
  \begin{eqnarray}
  \label{argoccup}
  \int_0^t\frac{1}{x^{m,\epsilon}_s-x^{l,\epsilon}_s}ds=\int_0^t\frac{x^{m,\epsilon}_s+x^{l,\epsilon}_s}{(x^{m,\epsilon}_s)^2-(x^{l,\epsilon}_s)^2}ds\leq 2 \int_0^t\frac{x^{m,\epsilon}_s}{(x^{m,\epsilon}_s)^2-(x^{l,\epsilon}_s)^2}ds <\infty. 
  \end{eqnarray}
  
  As in the second part of the proof of \cite[Theorem 2.2]{MR1875671} (equation (2.40)), using the occupation times formula and (\ref{argoccup}), we have for $1\leq l<m\leq n,t\geq 0$
  
  \begin{equation}
      \int_0^{+\infty}\frac{L^a_t(x^{m,\epsilon}-x^{l,\epsilon})}{a}da =\int_0^t\frac{d\langle x^{m,\epsilon}-x^{l,\epsilon}\rangle_s}{x^{m,\epsilon}_s-x^{l,\epsilon}_s}= 2\int_0^t\frac{1}{x^{m,\epsilon}_s-x^{l,\epsilon}_s}ds<+\infty
  \end{equation} 
  and \[\int_0^{+\infty}\frac{L^a_t(x^{1,\epsilon})}{a}da=\int_0^t\frac{d\langle x^{1,\epsilon}\rangle_s}{x^{1,\epsilon}_s} = \int_0^t\frac{1}{x^{1,\epsilon}_s}ds<+\infty\]
  where $L_t^a(\mathcal{X})$ is the   local time at time $t$ and on level $a$ for a real continuous semimartingale $\mathcal{X}$. Since the function $a\mapsto\frac{1}{a}$ is not integrable at 0 and $(L_t^a(X))$ is cadlag in $a$ by \cite[Theorem 1.7 p225]{MR1725357}, one deduces that $L^0_t(x^{m,\epsilon}-x^{l,\epsilon})=L^0_t(x^{1,\epsilon})=0$. 
  
  From there, the reasoning made in the proof of \cite[Theorem 2]{MR1875671} allows to conclude that the boundary process $L$ is equal to zero.

  Then, with $\hat\lambda^{i,\epsilon}=(x^{i,\epsilon})^2$ for all $i\in\{1,\dots,n\}$ we obtain a global solution to (\ref{hatAeta}). Following the same approach used to demonstrate Lemma \ref{Beta} below, the solutions to  (\ref{hatAeta}) are pathwise unique. The Yamada-Watanabe Theorem (see for instance \cite[Theorem 1.7 p368]{MR1725357}), allows to conclude that (\ref{hatAeta}) has a pathwise unique global strong solution.
  
  Let us now prove $(\ref{AetaNonMultipleCollision})$.
  
  Let us consider the SDE defined by ($A_\epsilon$)  with initial condition $0\leq \sqrt{\lambda^1_0}\leq\dots\leq \sqrt{\lambda^n_0}$.

    Let us define for all $\epsilon>0,M>0$  \[\tau_M=\inf\{t\geq0: \exists i\in\{1,\dots,n\},  x^{i,\epsilon}_t\geq M\},\] and and for  $t\in[0;\tau_M)$ :   $\Theta(t)=(\theta_1(t),\dots,\theta_n(t))$ with
  \begin{eqnarray}
    \forall i\in\{1,\dots, n \}\text{, } \theta_i(t) & = &-\frac{0\vee\frac{2\sqrt{2}}{\sqrt{\epsilon}}\left( x^{i,\epsilon}_t-\frac{\sqrt{\epsilon}}{2\sqrt{2}}\right)\wedge 1}{2 x^{i,\epsilon}_t} +\gamma  x^{i,\epsilon}_t\nonumber
  \end{eqnarray}
  and for all $t\geq0$
  \begin{equation}
      Z(t)=\exp\left\{-\int_0^{t\wedge\tau_M}\Theta(u)\cdot d{\textbf{B}}_u-\frac{1}{2}\int_0^{t\wedge\tau_M}||\Theta(u)||^2du\right\}. \nonumber
  \end{equation}

  We have for all $i\in\{1,\dots,n\}$,
  \begin{eqnarray}
    \theta_i^2(t) & = &\left(-\frac{0\vee\frac{2\sqrt{2}}{\sqrt{\epsilon}}\left( x^{i,\epsilon}_t-\frac{\sqrt{\epsilon}}{2\sqrt{2}}\right)\wedge 1}{2 x^{i,\epsilon}_t} +\gamma  x^{i,\epsilon}_t\right)^2\nonumber\\
    & \leq & 2\left(\frac{0\vee\frac{2\sqrt{2}}{\sqrt{\epsilon}}\left( x^{i,\epsilon}_t-\frac{\sqrt{\epsilon}}{2\sqrt{2}}\right)\wedge 1}{2 x^{i,\epsilon}_t}\right)^2+2(\gamma  x^{i,\epsilon}_t)^2\nonumber\\
    &\leq &\frac{1}{\epsilon}+2\gamma^2M^2.\nonumber
  \end{eqnarray}

  We thus have
  \begin{eqnarray}
    \mathbb{E}\left[\exp\left\{\frac{1}{2}\int_0^{t\wedge\tau_M}||\Theta(u)||^2du\right\}\right]& <&\infty \text{ for all }t\geq0.\nonumber
  \end{eqnarray} 
  Then, according to Novikov's criterion (see for instance \cite[Proposition 5.12 p198]{MR1121940}), $Z$ is a $\mathbb{P}$-martingale, and $\mathbb{E}[Z(t)]=1$. Consequently, recalling that recall that $\mathcal{F}_t=\sigma\left((\sqrt{\lambda^1_0},\dots,\sqrt{\lambda^n_0}), \left(\mathbf{B}_s \right)_{s \le t} \right)$  and defining $\mathbb{Q}$ such   that
  $$\frac{d\mathbb{Q}}{d\mathbb{P}}_{|\mathcal{F}_t}=Z(t)$$ and
  \begin{eqnarray}
      \text{ for all } i\in\{1,\dots,n\},\text{ } \check{ B}^{i,M}_t  &=& B^i_t+\int_0^{t\wedge \tau_M}\theta_i(s)ds \nonumber \\
      & = &  B^i_t-\int_0^{t\wedge \tau_M}\frac{0\vee\frac{2\sqrt{2}}{\sqrt{\epsilon}}\left( x^{i,\epsilon}_s-\frac{\sqrt{\epsilon}}{2\sqrt{2}}\right)\wedge 1}{2 x^{i,\epsilon}_s} +\gamma  x^{i,\epsilon}_sds, \text{ }t\geq 0,\nonumber
  \end{eqnarray}
  ${\Check{\textbf{B}}^M}=({\check{ B}^{1,M}},\dots,{\check{ B}^{n,M}})$ is a $\mathbb{Q}$- Brownian motion
  according to the Girsanov theorem (see for instance \cite[Proposition 5.4 p194]{MR1121940}).
  
  Thus,  ($A_\epsilon$) can be rewritten in terms of $\Check{\mathbf{B}}^M$ as 
   \begin{equation}
  \label{Aetaeqmod}
     d x^{i,\epsilon}_t  =  d\check B^{i,M}_t +\frac{\alpha-(n-1)\beta}{2 x^{i,\epsilon}_t}dt-\gamma x^{i,\epsilon}_t\mathds{1}_{\{t\geq\tau_M\}}dt   +\beta  x^{i,\epsilon}_t\sum_{j\neq i}\frac{dt}{( x^{i,\epsilon}_t)^2-( x^{j,\epsilon}_t)^2}\text{ for }i\in\{1,\dots,n\}.\tag{$\check{A}_{\epsilon,M}$}
    \end{equation}
     By the same arguments as in the beginning of the proof, the SDE

    \begin{eqnarray}
    \label{proxy}
       d X_t & =&  d\check{\mathbf{B}}^M_t-\nabla\Phi_0( X_t)dt-\nu( X_t)dL_t \text{ for all  }t\geq0\\
   &&  \forall t\geq0, \text{ } X_t\in \Bar D \text{ a.s.}\nonumber\\
   && X_0=(\sqrt{\lambda^1_0},\dots,\sqrt{\lambda^n_0})\nonumber
    \end{eqnarray}
    admits a global solution and the term $\nu( X_t)dL_t$ is zero. We can apply \cite[Theorem 3.1]{MR2792586} to the SDE (\ref{proxy}) to conclude that its solutions cannot have multiple collisions. This last SDE can  be rewritten
    \begin{eqnarray*}
     d x^{i,\epsilon}_t  &=&  d\check B^{i,M}_t +\frac{\alpha-(n-1)\beta}{2 x^{i,\epsilon}_t}dt+\beta  x^{i,\epsilon}_t\sum_{j\neq i}\frac{dt}{( x^{i,\epsilon}_t)^2-( x^{j,\epsilon}_t)^2}\text{ for }i\in\{1,\dots,n\}\\
     &&0\leq x^{1,\epsilon}<\dots<x^{n,\epsilon}, \text{ a.s., } dt-a.e..
    \end{eqnarray*}
  By  pathwise uniqueness (Lemma \ref{pathwise_uniqueness}), the solutions to this last SDE coincide with the solutions to (\ref{Aetaeqmod})  on $[0,\tau_M)$, which implies that there is no collision  of $(x^{1,\epsilon}_t,\dots,x^{n,\epsilon}_t)_{t\geq0}$ on $[0,\tau_M)$ under the probability $\mathbb{Q}$. There is thus no multiple collision of $(x^{1,\epsilon}_t,\dots,x^{n,\epsilon}_t)_{t\geq0}$ under the probability $\mathbb{P}$  on $[0,\tau_M)$ :
  \begin{equation*}
        \mathbb{P}\{\exists t\in[0,\tau_M):x^{i,\epsilon}_t=x^{i+1,\epsilon}_t \text{ and } x^{j,\epsilon}_t=x^{j+1,\epsilon}_t \text{ for } 0\leq i<j\leq n-1\}=0.
  \end{equation*}
  
  Since it is true for all $M>0$, and since as the SDE ($A_\epsilon$) admits a continuous global in time solution, $\tau_M\underset{M\rightarrow+\infty}{\longrightarrow}+\infty\text{ }\mathbb{P}-a.s.$, we have the result for ($A_\epsilon$), and thus for ($\hat A_\epsilon$), which concludes the proof.
  
  \end{proof}
 
  \begin{lemma}
  \label{Beta}
Let us assume $\alpha-(n-1)\beta>0$.  The SDE $(B_\epsilon)$ with random initial condition $(\tilde\lambda^{1,\epsilon}_0,\dots,\tilde\lambda^{n,\epsilon}_0)$ such that $0\leq\tilde\lambda^{1,\epsilon}_0\leq\dots\leq\tilde\lambda^{n,\epsilon}_0 $a.s. and independent from the Brownian motion $\mathbf{B}=(B^1,\dots,B^n)$ has a global pathwise unique strong solution $(\tilde{\lambda}^{1,\epsilon}_t,\dots,\tilde{\lambda}^{1,\epsilon}_t)_{t\geq0}$.

Moreover, 

\begin{eqnarray}
  \label{multiplecollisionbeta}
  \mathbb{P}\{\exists t>0:\tilde\lambda^{i,\epsilon}_t=\tilde\lambda^{i+1,\epsilon}_t \text{ and } \tilde\lambda^{j,\epsilon}_t=\tilde\lambda^{j+1,\epsilon}_t \text{ for } 2\leq i<j\leq n-1\}=0.
\end{eqnarray}
  \end{lemma}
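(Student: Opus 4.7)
The plan is to adapt the proof of Lemma \ref{Aeta}, exploiting the fact that in $(B_\epsilon)$ the coupling between particle $1$ and particles $2,\dots,n$ appears only through bounded and (locally) Lipschitz functions: the quantities $\frac{\tilde\lambda^{1,\epsilon}\wedge\epsilon}{(\tilde\lambda^{j,\epsilon}-\tilde\lambda^{1,\epsilon}\wedge\epsilon)\vee\epsilon}$ for $j\ge 2$ in the first equation, the term $\frac{\tilde\lambda^{i,\epsilon}}{(\tilde\lambda^{i,\epsilon}-\tilde\lambda^{1,\epsilon}\wedge\epsilon)\vee\epsilon}$ for $i\ge 2$, and the cutoff $0\vee\tfrac{2}{\sqrt\epsilon}(\sqrt{\tilde\lambda^{i,\epsilon}}-\tfrac{\sqrt\epsilon}{2})\wedge 1$ are all bounded and locally Lipschitz in their arguments. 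The only truly singular drifts in $(B_\epsilon)$ are the repulsion terms $\tilde\lambda^{i,\epsilon}/(\tilde\lambda^{i,\epsilon}-\tilde\lambda^{j,\epsilon})$ for $2\le i\ne j\le n$, together with the square-root diffusion coefficients on all particles.

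I would therefore perform the change of variables $y^{i,\epsilon}=\sqrt{\tilde\lambda^{i,\epsilon}}$ for $i\ge 2$, keeping $\tilde\lambda^{1,\epsilon}$ on the $\lambda$-scale (where, as $\alpha-(n-1)\beta>0$, the drift is not singular). The singular part of the subsystem for $(y^{2,\epsilon},\dots,y^{n,\epsilon})$ becomes the gradient of the convex potential
\[\Phi'_\gamma(y^2,\dots,y^n)=-\sum_{i=2}^n\left[\tfrac{\alpha-(n-1)\beta}{2}\ln y^i-\tfrac{\gamma}{2}(y^i)^2+\tfrac{\beta}{4}\sum_{j\ge 2,\,j\ne i}\bigl(\ln|y^i-y^j|+\ln|y^i+y^j|\bigr)\right]\]
on $D'=\{0<y^2<\dots<y^n\}$, extended by $+\infty$ outside, exactly as in Lemma \ref{Aeta}. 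C\'epa's multivoque theory \cite{MR1459451, MR1875671} applied to the joint system $(\tilde\lambda^{1,\epsilon},y^{2,\epsilon},\dots,y^{n,\epsilon})$ yields a weak solution to the associated reflected SDE, with the non-singular CIR-type drift for $\tilde\lambda^{1,\epsilon}$ and the bounded locally Lipschitz couplings for $(y^{2,\epsilon},\dots,y^{n,\epsilon})$ absorbed into the ``$g$'' component. The backward-induction/occupation-times argument from Lemma \ref{Aeta} then shows that the boundary process $L$ vanishes identically, yielding a global solution to $(B_\epsilon)$ after inverting the square-root transformation. Pathwise uniqueness follows from the Tanaka-type computation of Lemma \ref{pathwise_uniqueness}: the singular repulsion among $\tilde\lambda^{2,\epsilon},\dots,\tilde\lambda^{n,\epsilon}$ contributes non-positively by the sign identity (\ref{sgncomputation}), while the bounded locally Lipschitz couplings (and, for the first equation, the trivial inequality $|a\wedge\epsilon-b\wedge\epsilon|\le|a-b|$) are controlled by Gr\"onwall. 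Yamada-Watanabe then provides a pathwise unique global strong solution.

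For (\ref{multiplecollisionbeta}), I would reproduce the Girsanov argument of Lemma \ref{Aeta}. With $\tau_M=\inf\{t\ge 0:\max_i\tilde\lambda^{i,\epsilon}_t\ge M\}$, a suitable exponential martingale removes on $[0,\tau_M]$ all the bounded drifts (the $-2\gamma\tilde\lambda^{i,\epsilon}$ term, the cutoff term, and the interactions with $\tilde\lambda^{1,\epsilon}$), after which the $(y^{2,\epsilon},\dots,y^{n,\epsilon})$ subsystem reduces to a radial Dunkl SDE on the Weyl chamber of type $B_{n-1}$; \cite[Theorem 3.1]{MR2792586} then rules out multiple collisions among $\tilde\lambda^{2,\epsilon},\dots,\tilde\lambda^{n,\epsilon}$ on $[0,\tau_M)$, and letting $M\to\infty$ concludes by equivalence of measures. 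The main obstacle I anticipate is the verification that the boundary process $L$ is zero at $\{y^2=0\}$, which is the delicate point of the corresponding step in Lemma \ref{Aeta}: since the initial condition allows $\tilde\lambda^{2,\epsilon}_0=0$, the estimate $\int_0^t ds/y^{2,\epsilon}_s<\infty$ is not automatic and must be obtained by the backward induction on $\int_0^t y^{i,\epsilon}_s\sum_{j\ne i,\,j\ge 2}((y^{i,\epsilon}_s)^2-(y^{j,\epsilon}_s)^2)^{-1}\,ds<\infty$ initiated at $i=n$, with the extra bounded Lipschitz drifts contributing only integrable terms that do not disturb the induction.
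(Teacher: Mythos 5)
Your proposal founders on the existence step: you want to apply C\'epa's multivoque theory to the joint system $(\tilde\lambda^{1,\epsilon},y^{2,\epsilon},\dots,y^{n,\epsilon})$ while keeping the first coordinate on the $\lambda$-scale, but the results you invoke (the ones the paper uses in Lemma \ref{Aeta}, namely \cite[Theorem 2.2]{MR1875671} and \cite[Theorem 3.1]{MR1459451}) treat SDEs of the form $dX_t = d\mathbf B_t + g(X_t)dt - \nabla\Phi(X_t)dt - \nu(X_t)dL_t$ with \emph{additive} Brownian noise, whereas the $\tilde\lambda^{1,\epsilon}$ equation carries the multiplicative, non-Lipschitz diffusion coefficient $2\sqrt{\tilde\lambda^{1,\epsilon}}$. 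Passing to $y^{1,\epsilon}=\sqrt{\tilde\lambda^{1,\epsilon}}$ to restore additive noise does not help either: its drift then contains $\tfrac{\alpha-(n-1)\beta-1}{2\,y^{1,\epsilon}}$, and when $0<\alpha-(n-1)\beta<1$ (precisely the new regime Theorem \ref{existence} is after) this coefficient is negative, so the matching potential term $-\tfrac{\alpha-(n-1)\beta-1}{2}\ln y^{1}$ is concave and cannot go into the convex $\Phi$; it is also unbounded near $y^{1}=0$ and hence cannot be parked in the Lipschitz ``$g$'' component. In other words, particle $1$ can be coupled to C\'epa's framework neither on the $\lambda$-scale nor on the $\sqrt\lambda$-scale.

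The paper sidesteps this by not coupling particle $1$ at all during the C\'epa step. It first solves the decoupled auxiliary system (\ref{1}): the first coordinate is a stand-alone CIR process (Lemma \ref{lamberton}) and $(\tilde\lambda^{2,\epsilon},\dots,\tilde\lambda^{n,\epsilon})$ satisfies an autonomous $(\hat A_\epsilon)$-type system on $n-1$ particles, to which the square-root change of variables and the C\'epa/backward-induction argument of Lemma \ref{Aeta} apply. The bounded coupling terms between particle $1$ and the others (and the cutoff that is absent from (\ref{1})) are then reintroduced via a Girsanov change of measure with $\Theta$ bounded by constants of order $\epsilon^{-1/2}$, giving a global weak solution of $(B_\epsilon)$; a Tanaka--Gr\"onwall computation yields pathwise uniqueness, and Yamada--Watanabe upgrades to a strong solution. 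So the Girsanov trick you reserve for (\ref{multiplecollisionbeta}) is also the missing ingredient in your existence argument: decouple first, remove the coupling, then add it back by Girsanov. The rest of your outline — the occupation-time/backward-induction argument for $L\equiv 0$ on the $(y^2,\dots,y^n)$ subsystem, the Lipschitz bounds on the cutoff and the $\epsilon$-truncated coupling terms in the Tanaka estimate, and the reduction of (\ref{multiplecollisionbeta}) to a $B_{n-1}$-type radial Dunkl SDE via Girsanov and \cite[Theorem 3.1]{MR2792586} — follows the paper's proof.
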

  
  \begin{proof}

  Let us consider a  Brownian motion $\Tilde{\textbf{B}}=(\tilde B^1,\dots,\tilde B^n)$, and the SDE 
  
  \begin{eqnarray}
  \label{1}
     d\tilde\lambda^{1,\epsilon}_t & =& 2\sqrt{\tilde\lambda^{1,\epsilon}_t}d\tilde B^1_t+(\alpha-(n-1)\beta)dt-2\gamma\tilde \lambda^{1,\epsilon}_tdt  \\
   \forall i\in\{2,\dots, n \} \text{ , }  d \tilde\lambda^{i,\epsilon}_t &=& 2\sqrt{\tilde\lambda^{i,\epsilon}_t}d\tilde B^i_t+(\alpha-(n-1)\beta+1)dt-\left[0\vee\frac{2}{\sqrt{\epsilon}}\left(\sqrt{\tilde\lambda^{i,\epsilon}_t}-\frac{\sqrt{\epsilon}}{2}\right)\wedge1\right]dt\nonumber\\ &&-2\gamma\tilde \lambda^{i,\epsilon}_tdt+2\beta\sum_{j\geq 2, j\neq i}\frac{\tilde\lambda^{i,\epsilon}_t}{\tilde\lambda^{i,\epsilon}_t-\tilde\lambda^{j,\epsilon}_t}dt.\nonumber
  \end{eqnarray}
  Let us remark that $\tilde\lambda^1$ is a CIR process, and that the coordinates $i\in\{2,\dots,n\}$ satisfy an autonomous SDE for $n-1$ particles similar to equation $(\hat A_\epsilon)$. The only differences are  the coefficient $(n-1)\beta$ in $(\hat A_\epsilon)$ which remains $(n-1)\beta$ here and is thus unaffected by the change in the number of particles, and the terms $\left[0\vee\frac{2\sqrt{2}}{\sqrt{\epsilon}}\left(\sqrt{\tilde\lambda^{i,\epsilon}_t}-\frac{\sqrt{\epsilon}}{2\sqrt{2}}\right)\wedge1\right]$ in $(\hat A_\epsilon)$ which becomes $\left[0\vee\frac{2}{\sqrt{\epsilon}}\left(\sqrt{\tilde\lambda^{i,\epsilon}_t}-\frac{\sqrt{\epsilon}}{2}\right)\wedge1\right]$ here. We can still consider the root process $(\sqrt{\tilde\lambda^{2,\epsilon}_t},\dots,\sqrt{\tilde\lambda^{n,\epsilon}_t})$, and apply the same method as in the proof of Lemma \ref{Aeta} to prove the existence of a pathwise unique strong solution to this subsystem. Equation (\ref{1}) also has a strong solution (Lemma \ref{lamberton}), and consequently the whole n-particles system considered here admits a global strong solution.
  
  Let us define for all $\epsilon>0$ and for  $t\geq0$ :   $\Theta(t)=(\theta_1(t),\dots,\theta_n(t))$ with
  \begin{eqnarray}
    \theta_1(t) &=& -\frac{\beta}{\sqrt{\tilde\lambda^{1,\epsilon}_t}}\sum_{j\neq1}\frac{\tilde\lambda^{1,\epsilon}_t\wedge\epsilon}{(\tilde\lambda^{j,\epsilon}_t-\tilde\lambda^{1,\epsilon}_t\wedge\epsilon)\vee\epsilon}\nonumber \\
    \forall i\in\{2,\dots, n \}\text{, } \theta_i(t) & = &\frac{\beta}{\sqrt{\tilde\lambda^{i,\epsilon}_t}}\frac{\tilde\lambda^{i,\epsilon}_t}{(\tilde\lambda^{i,\epsilon}_t-\tilde\lambda^{1,\epsilon}_t\wedge\epsilon)\vee\epsilon} \nonumber
  \end{eqnarray}
  and for all $t\geq0$
  \begin{equation}
      Z(t)=\exp\left\{\int_0^t\Theta(u)\cdot d\Tilde{\textbf{B}}_u-\frac{1}{2}\int_0^t||\Theta(u)||^2du\right\}. \nonumber
  \end{equation}

  We have
  \begin{eqnarray}
    \theta_1^2(t) & = & \frac{\beta^2}{\tilde\lambda^{1,\epsilon}_t}\left(\sum_{j>1}\frac{\tilde\lambda^{1,\epsilon}_t\wedge \epsilon}{(\tilde\lambda^{j,\epsilon}_t-\tilde\lambda^{1,\epsilon}_t\wedge\epsilon)\vee\epsilon}\right)^2\nonumber\\
    & \leq & \frac{\beta^2(\tilde\lambda^{1,\epsilon}_t\wedge \epsilon)^2}{\tilde\lambda^{1,\epsilon}_t}\left(\sum_{j>1}\frac{1}{\epsilon}\right)^2 \nonumber\\
    &\leq&  \frac{(n-1)^2\beta^2}{\epsilon}.
  \end{eqnarray}

  For all $1<i\leq n$,
  \begin{eqnarray}
    \theta_i^2(t) & = & \frac{\beta^2}{\tilde\lambda^{i,\epsilon}_t}\frac{(\tilde\lambda^{i,\epsilon}_t)^2}{((\tilde\lambda^{i,\epsilon}_t-\tilde\lambda^{1,\epsilon}_t\wedge\epsilon)\vee\epsilon)^2}\nonumber\\
    & \leq & \beta^2\frac{(\tilde\lambda^{i,\epsilon}_t-\tilde\lambda^{1,\epsilon}_t\wedge\epsilon)^++\tilde\lambda^{1,\epsilon}_t\wedge\epsilon}{((\tilde\lambda^{i,\epsilon}_t-\tilde\lambda^{1,\epsilon}_t\wedge\epsilon)\vee\epsilon)^2}\nonumber\\
    &\leq &\frac{2\beta^2}{\epsilon}.
  \end{eqnarray}

  We thus have
  \begin{eqnarray}
    \mathbb{E}\left[\exp\left\{\frac{1}{2}\int_0^t||\Theta(u)||^2du\right\}\right]& <&\infty \text{ for all }t\geq0.\nonumber
  \end{eqnarray} 
  Then, according to Novikov's criterion (see for instance \cite[Proposition 5.12 p198]{MR1121940}), $Z$ is a $\mathbb{P}$-martingale, and $\mathbb{E}[Z(t)]=1$ for all $t\geq0$. Consequently, defining  for all $t\ge0$, $\tilde{\mathcal{F}}_t=\sigma\left((\tilde{\mathbf{B}}_t)_{s\le t},(\tilde\lambda_0^{1,\epsilon},\dots,\tilde\lambda_0^{n,\epsilon})\right)$ and $\mathbb{Q}$ such that
  $$\frac{d\mathbb{Q}}{d\mathbb{P}}_{|\tilde{\mathcal{F}}_t}=Z(t)$$ and
  \begin{eqnarray}
   \tilde{ \tilde {B}}^1_t&=&\tilde B^1_t-\int_0^t\theta_1(s)ds \nonumber \\
    & = & \tilde B^1_t+\int_0^t\frac{\beta}{\sqrt{\tilde\lambda^{1,\epsilon}_s}}\sum_{j\neq1}\frac{\tilde\lambda^{1,\epsilon}_s\wedge\epsilon}{(\tilde\lambda^{j,\epsilon}_s-\tilde\lambda^{1,\epsilon}_s\wedge\epsilon)\vee\epsilon}ds\nonumber\\
      \text{ for all } i\in\{2,\dots,n\},\text{ } \tilde{\tilde{ B}}^i_t  &=&\tilde B^i_t-\int_0^t\theta_i(s)ds \nonumber \\
      & = & \tilde B^i_t-\int_0^t\frac{\beta}{\sqrt{\tilde\lambda^{i,\epsilon}_s}}\frac{\tilde\lambda^{i,\epsilon}_s}{(\tilde\lambda^{i,\epsilon}_s-\tilde\lambda^{1,\epsilon}_s\wedge\epsilon)\vee\epsilon}ds, \text{ }0\leq t,\nonumber
  \end{eqnarray}
  $\Tilde{\Tilde{\textbf{B}}}=(\tilde{\tilde{ B}}^1,\dots,\tilde{\tilde{ B}}^n)$ is a $\mathbb{Q}$- Brownian motion
  according to the Girsanov theorem (see for instance \cite[Proposition 5.4 p194]{MR1121940}).

Consequently, $(B_\epsilon)$ has a global weak solution.

We now have to prove the pathwise uniqueness of the solutions to $(B_\epsilon)$.
The differences with Lemma \ref{pathwise_uniqueness} are the term \[\left[0\vee\frac{2}{\sqrt{\epsilon}}\left(\sqrt{\tilde\lambda^{i,\epsilon}_t}-\frac{\sqrt{\epsilon}}{2}\right)\wedge1\right]\] and the interaction terms between the first particle and the others. 

Let  $Z=(z^1,\dots,z^n)$ and $\tilde Z=(\tilde z^1,\dots ,\tilde z^n)$  be two global solutions to $(B_\epsilon)$ with $Z_0=\tilde Z_0$ independent from the same driving Brownian motion $\textbf{B}=(B^1,\dots,B^n)$.
   
    Let $M>0$ and \[\tau_M  =  \inf\{t\geq0 :\tilde z^1_t+ z^{1}_t+\tilde z^n_t+ z^{n}_t\geq M \}.\]
  As $Z$ and $\tilde Z$ are continuous and assumed well defined on $\mathbb{R}_+$,  $\tau_M\uparrow\infty$ when $M\uparrow\infty$.

  The local time of $z^i-\tilde z^i$ at $0$ is zero (\cite[Lemma 3.3 p389]{MR1725357}). Applying the Tanaka formula to the process $z^i-\tilde z^i$ stopped at $\tau_M$ and summing over $i$,
  \begin{eqnarray}
    \sum_{i=1}^n|z^i_{t\wedge\tau_M}-\tilde z^i_{t\wedge\tau_M}| & = & \sum_{i=1}^n|z^i_0-\tilde z^i_0|+\sum_{i=1}^n\int_0^{t\wedge\tau_M}\text{sgn}(z^i_s-\tilde z^i_s)d(z^i_s-\tilde z^i_s)\nonumber\\
    & = &2\sum_{i=1}^n\int_0^{t\wedge\tau_M}|\sqrt{z^i_s}-\sqrt{\tilde z^i_s}|dB^i_s\nonumber\\
    &&+ 2\beta\int_0^{t\wedge\tau_M}\sum_{i=2}^n\text{sgn}(z^i_s-\tilde z^i_s)\sum_{j\geq2, j\neq i}\left(\frac{z^i_s}{z^i_s-z^j_s}-\frac{\tilde z^i_s}{\tilde z^i_s-\tilde z^j_s}\right)ds\label{pu1}\\
    && -2\gamma\int_0^{t\wedge\tau_M}\sum_{i=1}^n|z^i_s-\tilde z^i_s|ds\label{pu2}\\
    &&+2\beta\int_0^{t\wedge\tau_M}\sum_{i=2}^n\Bigg\{\text{sgn}(z^i_s-\tilde z^i_s)\left(\frac{z^i_s}{(z^i_s-z^1_s\wedge\epsilon)\vee\epsilon}-\frac{\tilde z^i_s}{(\tilde z^i_s-\tilde z^1_s\wedge\epsilon)\vee\epsilon}\right)\nonumber\\
    &&-\text{sgn}(z^1_s-\tilde z^1_s)\left(\frac{z^1_s\wedge\epsilon}{(z^i_s-z^1_s\wedge\epsilon)\vee\epsilon}-\frac{\tilde z^1_s\wedge\epsilon}{(\tilde z^i_s-\tilde z^1_s\wedge\epsilon)\vee\epsilon}\right)\Bigg\}ds\label{pu3}\\
    &&-\int_0^{t\wedge\tau_M}\sum_{i=2}^n\text{sgn}(z^i_s-\tilde z^i_s)\left\{\left[0\vee\frac{2}{\sqrt{\epsilon}}\left(\sqrt{z^i_t}-\frac{\sqrt{\epsilon}}{2}\right)\wedge1\right]-\left[0\vee\frac{2}{\sqrt{\epsilon}}\left(\sqrt{\tilde z^i_t}-\frac{\sqrt{\epsilon}}{2}\right)\wedge1\right]\right\}ds\nonumber\\\label{pu4}
  \end{eqnarray}
   As the processes are stopped at $\tau_M$, the expectation of the stochastic integrals is zero.  As in the proof of Lemma \ref{pathwise_uniqueness}, the terms (\ref{pu1}) are not positive. To deal with the expectation of (\ref{pu3}), one  remarks that the function \[(x,y)\mapsto\frac{x}{(x-y\wedge\epsilon)\vee\epsilon}\] is Lipschitz of coefficient $C_M$ on $[0,M]^2$.

   As for the term (\ref{pu4}), the function $f:z\mapsto\left[0\vee\frac{2}{\sqrt{\epsilon}}\left(\sqrt{z}-\frac{\sqrt{\epsilon}}{2}\right)\wedge1\right]$ defined on $\mathbb{R}_+$ is constant on $\left[0,\frac{\epsilon}{4}\right)$ and on $[\epsilon,+\infty)$ and is differentiable on $\left(\frac{\epsilon}{4},\epsilon\right)$ with
   \begin{eqnarray}
     f'(z)  =  \frac{1}{\sqrt{\epsilon z}}\leq \frac{2}{\epsilon}&& \text{ for all }z\in\left(\frac{\epsilon}{4},\epsilon\right).\nonumber
   \end{eqnarray}
   Consequently, $f$ is Lipschitz of coefficient $\frac{2}{\epsilon}$. Then  for all $M>0$ there exists a constant $K_M\geq0$ depending on $M$ such that for all $t\geq0$
   
    \begin{eqnarray}
    \sum_{i=1}^n\mathbb{E}|z^i_{t\wedge\tau_M}-\tilde z^i_{t\wedge\tau_M}| & \leq & K_M\mathbb{E}\left[\int_0^{t\wedge\tau_M}\sum_{i=1}^n|z^i_{s}-\tilde z^i_{s}|ds\right]\nonumber\\
    & \leq & K_M\int_0^{t}\sum_{i=1}^n\mathbb{E}|z^i_{s\wedge\tau_M}-\tilde z^i_{s\wedge\tau_M}|ds.\nonumber
  \end{eqnarray}
   
 The Grönwall Lemma allows to conclude that for all $M>0$ and $t\geq0$
 \begin{equation}
    \sum_{i=1}^n\mathbb{E}|z^i_{t\wedge\tau_M}-\tilde z^i_{t\wedge\tau_M}| =0.\nonumber 
\end{equation}
  Using Fatou's Lemma to take the limit  $M$ going to infinity we deduce that for all $t\geq0$
  \begin{equation}
    \sum_{i=1}^n\mathbb{E}|z^i_{t}-\tilde z^i_{t}| =0, \nonumber 
\end{equation}
which concludes the proof on existence and pathwise uniqueness.

 Let us now prove (\ref{multiplecollisionbeta}). To do so, we can use the same method used to prove (\ref{AetaNonMultipleCollision}) on the square root of the coordinates $(\tilde \lambda^{2,\epsilon},\dots,\tilde \lambda^{n,\epsilon})$, which solve, as explained in the beginning of the proof, an SDE similar to $(A_\epsilon)$ for $n-1$ particles.

  \end{proof}

\section{Proof of the other results}

\paragraph{}

Let us now prove the result on the  of collision time between particles. To do so, we  study the difference between two neighbour coordinates and bound  it from above by a time changed Bessel process hitting zero in finite time.

\begin{proof}[Proof of Proposition \ref{everyparticlecollide}]
For $i\in\{2,\dots,n\},$
  \begin{eqnarray}
      d(\lambda_t^{i}-\lambda_t^{i-1}) & = &2\sqrt{\lambda^{i}_t}dB^{i}_t-2\sqrt{\lambda^{i-1}_t}dB^{i-1}_t+2\gamma(\lambda^{i-1}_t-\lambda^{i}_t)dt+2\beta\left(\lambda^{i}_t\sum_{j\neq i}\frac{1}{\lambda^{i}_t-\lambda^j_t}-\lambda^{i-1}_t\sum_{j\neq i-1}\frac{1}{\lambda^{i-1}_t-\lambda^j_t}\right)dt \nonumber\\
      & = & 2\sqrt{\lambda_t^{i}}dB_t^{i}-2\sqrt{\lambda_t^{i-1}}dB_t^{i-1}+2\gamma(\lambda^{i-1}_t-\lambda^{i}_t)dt+2\beta\frac{\lambda_t^{i}+\lambda_t^{i-1}}{\lambda_t^{i}-\lambda_t^{i-1}}dt+2\beta\sum_{j\neq i,i-1}\left(\frac{\lambda^{i}_t}{\lambda^{i}_t-\lambda^j_t} -\frac{\lambda^{i-1}_t}{\lambda^{i-1}_t-\lambda^j_t}\right)dt\nonumber\\
      & = &2\sqrt{\lambda_t^{i}}dB_t^{i}-2\sqrt{\lambda_t^{i-1}}dB_t^{i-1}+2\gamma(\lambda^{i-1}_t-\lambda^{i}_t)dt+2\beta\frac{\lambda_t^{i}+\lambda_t^{i-1}}{\lambda_t^{i}-\lambda_t^{i-1}}dt+2\beta\sum_{j\neq i,i-1}\lambda^{j}_t\frac{\lambda^{i-1}_t-\lambda^{i}_t}{(\lambda^{i}_t-\lambda^j_t)(\lambda^{i-1}_t-\lambda^j_t)}dt \nonumber\\
      &\leq& 2\sqrt{\lambda_t^{i}}dB_t^{i}-2\sqrt{\lambda_t^{i-1}}dB_t^{i-1}+2\beta\frac{\lambda_t^{i}+\lambda_t^{i-1}}{\lambda_t^{i}-\lambda_t^{i-1}}dt\label{diffiip1}
  \end{eqnarray}
  because 
  \begin{equation}
      2\gamma(\lambda_t^{i-1}-\lambda_t^{i})\leq 0 \nonumber
  \end{equation}
  and the contribution  of the greater and smaller coordinates is non-positive (in the sum, the numerator of the ratio factor is always non-positive and the denominator always non-negative).
   Let us fix  $i\in\{2,\dots,n\}$ and
  let us define the change of time 
  \begin{equation}
  \label{A}
      A^i_t =4\int_0^t(\lambda_s^{i}+\lambda_s^{i-1})ds \text{ for } t\geq0
  \end{equation}
  with generalized inverse
  \begin{equation}
      C^i_t=\inf\{s\geq0:A^i_s\geq t\} \nonumber.
  \end{equation}
  
 The process   $A^i$ is continuous, and according to Lemma \ref{infini} below,   $\underset{t\rightarrow\infty}\lim A^i_t=+\infty$, which implies that for all $t\geq0$, $C^i_t<\infty$.
 
 We can define
 
 \[(B^{(i)}_t)_t=\left(2\int_0^{C^i_t}\sqrt{\lambda_s^{i}}dB_s^{i}-\sqrt{\lambda_s^{i-1}}dB_s^{i-1}\right)_t.\]

  We then have for all $t\geq0$
  \begin{eqnarray}
  \langle B^{(i)}, B^{(i)}\rangle_t &=&4\langle  \int_0^{\cdot}\sqrt{\lambda_s^{i}}dB_s^{i}-\sqrt{\lambda_s^{i-1}}dB_s^{i-1},\int_0^{\cdot}\sqrt{\lambda_s^{i}}dB_s^{i}-\sqrt{\lambda_s^{i-1}}dB_s^{i-1}\rangle_{C^i_t} \nonumber\\
  & = & 4\int_0^{C^i_t}(\lambda_s^{i}+\lambda_s^{ i-1})ds \nonumber\\
  & = & A^i_{C^i_t} \nonumber\\
  & = & t.\nonumber
  \end{eqnarray}
  By Lévy's characterization, $B^{(i)}$ is a Brownian motion.
  
  We define for all $t\in\mathbb{R}_+$ : $D^i_t=\lambda_{C^i_t}^{i}-\lambda_{C^i_t}^{i-1}$.  By (\ref{diffiip1}) and the definition of $B^{(i)}$, we have :
  
  \begin{eqnarray}
      dD^i_t& =&d(\lambda_{C^i_t}^{i}-\lambda_{C^i_t}^{i-1}) \nonumber\\
      & \leq & dB^{(i)}_t+\frac{\beta}{2(\lambda_{C^i_t}^{i}-\lambda_{C^i_t}^{i-1})}dt=dB^{(i)}_t+\frac{\beta}{2D^i_t}dt.\nonumber
  \end{eqnarray}
  Let us define the Bessel process
  \begin{equation}
      r^i_t = \lambda_0^{i}-\lambda_0^{i-1}+B^{(i)}_t+\frac{\beta}{2}\int_0^t\frac{1}{r^i_s}ds. \nonumber
  \end{equation}
  Then,
  \begin{equation}
      d(r^{i}_t-D^{i}_t)\geq \frac{\beta}{2}\frac{D^{i}_t-r^{i}_t}{r^{i}_tD^{i}_t}dt\nonumber
  \end{equation}
  and  as long as neither  $r^{i}$ nor $D^{i}$ touches 0
  \begin{equation}
      d\left(e^{\frac{\beta}{2}\int_0^t\frac{ds}{r^{i}_sD^{i}_s}}(r^{i}_t-D^{i}_t)\right)\geq0\nonumber
  \end{equation}
   and  $r^{i}_t\geq D^{i}_t$ thanks to the equality $r^i_0=D^i_0$. As the trajectories are continuous, as soon as $r^{i}$ reaches $0$, which is the case in finite time almost surely when $\beta\leq1$ (see for instance \cite[Chapter XI, (ii) p. 442]{MR1725357}),  so does $D^{i}$, which concludes the proof.  
 \end{proof}
  \begin{lemma}
  \label{infini}
Let us assume $\gamma\in\mathbb{R},\alpha>0,\alpha-(n-1)\beta\geq0$, $\beta<1$ and that $\Lambda=(\lambda^1_t,\dots,\lambda^n_t)_t$ is a global solution to (\ref{eds}). Let $i\in\{2,\dots,n\}$. Then, $$ \int_0^{+\infty}(\lambda_s^{i}+\lambda_s^{i-1})ds =+\infty.$$
  \end{lemma}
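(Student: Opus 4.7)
The key reduction is that $\lambda^1_s \leq \lambda^j_s$ for every $j$ gives $\lambda^{i-1}_s + \lambda^i_s \geq 2\lambda^1_s$, so it suffices to prove assertion (\ref{intresult}): $\int_0^{+\infty}\lambda^1_s\,ds = +\infty$ almost surely. My plan is to treat the case $\gamma>0$ using Proposition \ref{stationary} together with the $L^1$-contraction of Lemma \ref{pathwise_uniqueness}; the case $\gamma\leq 0$ will be handled separately by pathwise comparison.

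First I would establish the assertion for the stationary solution. Let $\tilde\Lambda$ be the strong solution of (\ref{eds}) with $\tilde\Lambda_0\sim\rho_{inv}$ independent of $\mathbf{B}$. By Proposition \ref{stationary} and pathwise uniqueness, $\tilde\Lambda$ is stationary as a process: $(\tilde\Lambda_{t_0+\cdot})\stackrel{d}{=}(\tilde\Lambda_\cdot)$ for every $t_0\geq 0$. Since $f_{inv}$ has no atom at $\{\lambda^1=0\}$, $\tilde\lambda^1_0>0$ a.s., so by path continuity $\int_0^{+\infty}\tilde\lambda^1_s\,ds>0$ a.s. Suppose for contradiction that $\mathbb{P}(A)>0$ where $A=\{\int_0^\infty\tilde\lambda^1_s\,ds<+\infty\}$: monotone convergence gives $\mathbb{P}(\int_T^\infty\tilde\lambda^1_s\,ds<\varepsilon)\to\mathbb{P}(A)$ as $T\to\infty$, for every $\varepsilon>0$, while stationarity gives $\int_T^\infty\tilde\lambda^1_s\,ds\stackrel{d}{=}\int_0^\infty\tilde\lambda^1_s\,ds$; letting $\varepsilon\downarrow 0$ then forces $\mathbb{P}(\int_0^\infty\tilde\lambda^1_s\,ds=0)\geq\mathbb{P}(A)>0$, contradicting the positivity just proved. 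Hence $\int_0^\infty\tilde\lambda^1_s\,ds=+\infty$ a.s.

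Next I would transfer this to an arbitrary solution $\Lambda$ (with integrable initial condition) by coupling with $\tilde\Lambda$ through the same Brownian motion. Lemma \ref{pathwise_uniqueness} yields $\sum_j\mathbb{E}|\lambda^j_t-\tilde\lambda^j_t|\leq Ce^{-2\gamma t}$, and Fubini then gives $\int_0^\infty|\lambda^1_s-\tilde\lambda^1_s|\,ds<+\infty$ a.s. Combined with the previous step,
\[\int_0^\infty\lambda^1_s\,ds\geq\int_0^\infty\tilde\lambda^1_s\,ds-\int_0^\infty|\lambda^1_s-\tilde\lambda^1_s|\,ds=+\infty\quad\text{a.s.}\]
A truncation $\Lambda_0\mathds{1}_{\sum_j\lambda^j_0\leq M}$ followed by $M\to\infty$ handles non-integrable initial conditions. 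For $\gamma\leq 0$, the mean-reversion no longer pulls particles down, and direct CIR-type comparisons in the spirit of Proposition \ref{multiplecollision} (applied to the partial sums $V^k_t=\sum_{j=k}^n\lambda^j_t$, which are bounded below by CIR processes with positive drift) should yield the conclusion without the stationarity machinery.

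The hard part will be the stationarity step: converting stationarity of $\tilde\Lambda$ into an almost-sure divergence without invoking the ergodic theorem, which is unavailable because the Markov property of $\Lambda$ is not established (cf.~the remark after Proposition \ref{stationary}). The stationarity-plus-distributional-identity contradiction outlined above is the key device that bypasses this obstacle; everything else reduces to contraction estimates already available from Lemma \ref{pathwise_uniqueness}.
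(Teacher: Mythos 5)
Your argument for $\gamma>0$ tracks the paper's closely: the paper also works with the stationary solution $Y$ started from $\rho_{inv}$, derives $\int_0^{+\infty}y^1_s\,ds=+\infty$ a.s.\ from the equality in law of $\int_0^{+\infty}y^1_s\,ds$ and $\int_1^{+\infty}y^1_s\,ds$ combined with the pathwise inequality between them and the strict positivity of $\int_0^1 y^1_s\,ds$, and then transfers to an arbitrary solution via the exponential $L^1$-contraction \eqref{contraction}. Your contradiction formulation is a cosmetic variant of the same mechanism; your extra truncation remark for non-integrable initial data is reasonable (the paper implicitly restricts to integrable initial conditions at that stage).

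The gap is in the $\gamma\le0$ case. Bounding $V^k_t=\sum_{j=k}^n\lambda^j_t$ from below by a CIR process only yields $\int_0^{+\infty}V^k_s\,ds=+\infty$. But $V^{i-1}_s=\lambda^{i-1}_s+\lambda^i_s+\dots+\lambda^n_s$ is dominated by the \emph{largest} coordinates, so divergence of $\int V^{i-1}_s\,ds$ is perfectly consistent with $\int_0^{+\infty}(\lambda^{i-1}_s+\lambda^i_s)\,ds<+\infty$: since the coordinates are ordered, $V^k_s\le(n-k+1)\lambda^n_s$, so all you really learn is $\int_0^{+\infty}\lambda^n_s\,ds=+\infty$ (already handled in the $i=n$ remark), not the divergence of the integral of a fixed pair of small coordinates, and certainly not $\int_0^{+\infty}\lambda^1_s\,ds=+\infty$. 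There is no way to lower-bound a single $\lambda^j$ by a positive multiple of $V^k$. What is needed here is a \emph{coordinate-wise} comparison, and this is what the paper does: let $Z$ be a solution with some $\gamma_0>0$ and $\tilde Z$ the given $\tilde\gamma\le0$ solution with the same Brownian motion and initial condition; a Tanaka-formula argument on $\sum_i(z^i-\tilde z^i)^+$, using the monotonicity in $\gamma$ of the coordinate-wise drift and the sign computation \eqref{sgncomputation}, shows $z^i_t\le\tilde z^i_t$ a.s.\ for every $i$, and then $\int_0^{+\infty}\tilde z^1_s\,ds\ge\int_0^{+\infty}z^1_s\,ds=+\infty$ from the $\gamma_0>0$ case. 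Replace your partial-sum CIR comparison by this coupling-in-$\gamma$ and the argument closes.
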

  \begin{remark}
  For $i=n$ the proof is straightforward: since the coordinates are ordered, for all $t\geq0$
\[\int_0^{t}(\lambda_s^{n}+\lambda_s^{n-1})ds\geq\frac{2}{n}\int_0^{t}\left(\sum_{i=1}^n\lambda^i_s\right)ds.\]
Let us define $W$ by $W_0=0$ and
$dW_t=\sum_{i=1}^n\frac{\sqrt{\lambda^i_t}}{\sqrt{\sum_{j=1}^n\lambda^j_t}}dB^i_t$.
According to Lévy's characterization, $W$ is a Brownian motion.
Then, using the equality (\ref{sumcir}) in the introduction, $\left(\sum_{i=1}^n\lambda^i_s\right)_{s\geq0}$ is a CIR process.
  Proposition 6.2.4 in  \cite{MR2362458}  gives an expression of the Laplace transform of integrated CIR processes :  for any  $\mu>0$,
  \begin{equation*}
      \mathbb{E}\left[e^{-\mu\int_0^t\sum_{i=1}^n\lambda^i_sds}\right]  = \exp(-2\alpha\phi_\mu(t))\exp\left(-\left(\sum_{i=1}^n\lambda_0^{i}\right)\psi_\mu(t)\right)
  \end{equation*}
  where
  \begin{align*}
      \phi_\mu(t) &  = -\frac{1}{2}\ln\left(\frac{2\sqrt{\gamma^2+2\mu}e^{\left(\gamma-\sqrt{\gamma^2+2\mu}\right)t}}{\left(\sqrt{\gamma^2+2\mu}-\gamma\right)e^{-2t\sqrt{\gamma^2+2\mu}}+\sqrt{\gamma^2+2\mu}+\gamma}\right)\underset{t\rightarrow+\infty}{\longrightarrow}+\infty,\\
      \text{and }\psi_\mu(t) & = \frac{\mu\left(e^{2t\sqrt{\gamma^2+2\mu}}-1\right)}{\sqrt{\gamma^2+2\mu}-\gamma+e^{2t\sqrt{\gamma^2+2\mu}}(\sqrt{\gamma^2+2\mu}+\gamma)} \underset{t\rightarrow+\infty}{\longrightarrow} \frac{\mu}{\sqrt{\gamma^2+2\mu}+\gamma}.
  \end{align*}
  Thus,
  \[\mathbb{E}\left[e^{-\mu\int_0^{+\infty}\sum_{i=1}^n\lambda^i_sds}\right]=\underset{t\rightarrow+\infty}{\lim}\mathbb{E}\left[e^{-\mu\int_0^t\sum_{i=1}^n\lambda^i_sds}\right]=0\] so that
  $$\int_0^{+\infty}\sum_{i=1}^n\lambda^i_sds= +\infty \text{ a.s.}$$
and we can conclude.
  \end{remark}
\begin{proof}

For all $i\in\{2,\dots,n\}$, we proceed the following way.

 Let us first deal with the case $\gamma>0$.
 To do so, let $Y_0$ be distributed according to  $\rho_{inv}$ and independant from the Brownian motion $\mathbf{B}$ and let $Y= (y^1_t,\dots,y^n_t)_t$ be a solution to (\ref{eds}) starting from $Y_0$. By Proposition \ref{stationary}, for all $t\geq0$, $Y_t$ is distributed according to $\rho_{inv}$. Let us  show that \[\mathbb{P}\left(\int_0^{+\infty}y^1_sds=+\infty\right)=1.\]
Since $(Y_{t+1})_{t\geq0}$ is a  solution to (\ref{eds}) starting from $Y_1$ distributed according to $\rho_{inv}$ for the Brownian motion $(B_{t+1}-B_1)_{t\geq0}$ and pathwise uniqueness implies weak uniqueness, $(Y_{t+1})_{t\geq0}$ has the same distribution as $(Y_{t})_{t\geq0}$. Thus,
$\int_0^{+\infty}y^1_sds$ has the same distribution as $\int_1^{+\infty}y^1_sds$.
Consequently, since $$e^{-\int_0^{+\infty}y^1_sds}\leq e^{-\int_1^{+\infty}y^1_sds} \text{ a.s.},$$
\begin{eqnarray*}
     e^{-\int_0^{+\infty}y^1_sds}= e^{-\int_1^{+\infty}y^1_sds} \text{ a.s.}
\end{eqnarray*}
which also writes \[\left(1-e^{\int_0^{1}y^1_sds}\right)e^{-\int_0^{+\infty}y^1_sds}=0\text{ a.s..}\]
  As \[\rho_{inv}(\{x\in\mathbb{R}^n,x^1>0\})=1,\]we have \[y^1_0>0\text{ a.s.}\] and \[e^{\int_0^{1}y^1_sds}>1\text{ a.s.},\] and one can deduce that
  \[ e^{-\int_0^{+\infty}y^1_sds}=0\text{ a.s.}\] so that \[ \int_0^{+\infty}y^1_sds=+\infty\text{ a.s..}\]

   Let us consider a process $Z$  solution to (\ref{eds}) with integrable initial condition  with the same driving Brownian motion $\mathbf{B}$. By (\ref{contraction}), for all $t\geq0$
   \begin{equation*}
    \mathbb{E}|z^1_{t}- y^1_{t}| \leq\left(\sum_{i=1}^n\mathbb{E}|z^i_0- y^i_0|\right)\exp(-2\gamma t)
\end{equation*}
and then
\begin{align*}
    \mathbb{E}\left|\int_0^{+\infty}z^1_sds-\int_0^{+\infty}y^1_sds\right|&\leq \int_0^{+\infty}\mathbb{E}\left|z^1_s-y^1_s\right|ds\\
     & \leq \int_0^{+\infty}\sum_{i=1}^n\mathbb{E}|z^i_0- y^i_0|\exp(-2\gamma s)ds.
\end{align*}
Thus for $\gamma>0$, the right hand side is finite. Since
\[\int_0^{+\infty}y^1_sds=+\infty,\] we can deduce that \[\int_0^{+\infty}z^1_sds=+\infty,\] and with the order between the coordinates, \[\int_0^{+\infty}z^i_sds=+\infty.\]

   Let us denote the drift term of the $i$-th coordinate of (\ref{eds}) by $b_i^\gamma$. Let $\tilde\gamma\leq 0$ and $\tilde Z$  a global in time solution to (\ref{eds}) with $\gamma$ replaced by $\tilde\gamma$, with the same initial condition and the same driving Brownian motion $\mathbf{B}$ as $Z$. 
   Let $M>0$ and \[\tau_M  =  \inf\{t\geq0 :\tilde z^n_t+ z^{n}_t\geq M \}\] with the convention $\inf\emptyset= +\infty$.
  As $Z$ and $\tilde Z$ are continuous and assumed well defined on $\mathbb{R}_+$,  $\tau_M\uparrow +\infty$ when $M\uparrow\infty$. Reasoning like in the proof of Lemma \ref{pathwise_uniqueness}, we obtain
   \begin{align}
        \mathbb{E}\left[\sum_{i=1}^n(z^i_{t\wedge\tau_M}-\tilde z^i_{t\wedge\tau_M})^+\right]& = \mathbb{E}\left[\sum_{i=1}^n\int_0^{t\wedge\tau_M}\mathds{1}_{\{z^i_s>\tilde z^i_s\}}\left(b_i^\gamma(Z_s)-b_i^{\tilde{\gamma}} (\tilde Z_s) \right)ds\right]\nonumber\\
       &\leq \mathbb{E}\left[\int_0^{t\wedge\tau_M}\sum_{i=1}^n\mathds{1}_{\{z^i_s>\tilde z^i_s\}}\left(-2\gamma z^i_s+2\tilde\gamma \tilde z^i_s+\beta\sum_{j\neq i}\left(\frac{z^i_s+z^j_s}{z^i_s-z^j_s}-\frac{\tilde z^i_s+\tilde z^j_s}{\tilde z^i_s-\tilde z^j_s}\right)\right)ds\right]\nonumber\\
       &\leq \beta\mathbb{E}\left[\int_0^{t\wedge\tau_M}\sum_{i=1}^n\mathds{1}_{\{z^i_s>\tilde z^i_s\}}\sum_{j\neq i}\left(\frac{z^i_s+z^j_s}{z^i_s-z^j_s}-\frac{\tilde z^i_s+\tilde z^j_s}{\tilde z^i_s-\tilde z^j_s}\right)ds\right]\label{bigfraction}\\
       &\leq0\nonumber
   \end{align}
   The last inequality comes from the fact that, as in (\ref{sgncomputation}),
 we have for all $i<j$ :
  \begin{eqnarray}
    \left[\frac{z^i_s+z^j_s}{z^i_s-z^j_s}-\frac{\tilde z^i_s+\tilde z^j_s}{\tilde z^i_s-\tilde z^j_s}\right](\mathds{1}_{\{z^i_s>\tilde z^i_s\}}-\mathds{1}_{\{z^j_s>\tilde z^j_s\}}) & = & -2\frac{z^j_s|\tilde z^i_s-z^i_s|+z^i_s|z^j_s-\tilde z^j_s| }{(z^i_s-z^j_s)(\tilde z^i_s-\tilde z^j_s)}|\mathds{1}_{\{z^i_s>\tilde z^i_s\}}-\mathds{1}_{\{z^j_s>\tilde z^j_s\}}|\leq 0 
  \end{eqnarray}
  
  as the denominator is non-negative.

   Consequently, for all $t\geq0$, \[ \mathbb{E}\left[\sum_{i=1}^n(z^i_{t\wedge\tau_M}-\tilde z^i_{t\wedge\tau_M})^+\right]\leq0.\]
   
  Using Fatou's Lemma to take the limit  $M$ going to infinity, we deduce that for all $t\geq0$
  \begin{equation}
    \mathbb{E}\left[\sum_{i=1}^n(z^i_{t}-\tilde z^i_{t})^+\right]\leq0 \nonumber 
\end{equation}
   
   and thus,
   for all $i\in\{1,\dots,n\}$, \[z^i_t<\tilde z^i_t \text{ a.s.}.\]
   Consequently, 
   \[\int_0^{+\infty}z^i_sds=+\infty\text{ } a.s.,\] gives by comparison  \[\int_0^{+\infty}\tilde z^i_sds=+\infty\text{ } a.s.\] and the result is proved for all $\gamma\in\mathbb{R}$.

  \end{proof}

\begin{proposition}
\label{neg}
 Let us assume  $\alpha-(n-1)\beta<0$, $\beta>0$ and $0\leq\lambda^1_0\leq\dots\leq\lambda^n_0$.
 
 The SDE (\ref{eds}) has a unique strong solution defined on the time interval  $[0,\underset{\epsilon\rightarrow0}{\lim}S_\epsilon)$ where for all $\epsilon>0$,
\begin{equation}
      S_\epsilon=\inf\{t\geq0 :  \lambda^1_t\leq\epsilon\}.\nonumber
  \end{equation}
\end{proposition}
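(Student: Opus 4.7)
The plan is to adapt the construction of Lemma~\ref{Aeta}, taking advantage of the fact that under $\alpha-(n-1)\beta<0$ only the singularity at $\lambda^1=0$ needs to be circumvented: since we stop as soon as $\lambda^1$ becomes small, no multiple collision at zero can interfere. The overall structure will mirror the proof of Theorem~\ref{existence}, with just a single family of auxiliary SDEs instead of the two families $(\hat A_\epsilon)$ and $(B_\epsilon)$ used there.

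For each $\epsilon>0$ I would introduce an SDE $(\bar A_\epsilon)$ obtained from $(\hat A_\epsilon)$ by enlarging the constant shift so that the resulting potential can remain convex even for arbitrarily negative $\alpha-(n-1)\beta$. Concretely, pick $M\ge 2-(\alpha-(n-1)\beta)$, replace the $+1$ appearing in $(\hat A_\epsilon)$ by $+M$, and replace the cut-off $0\vee\tfrac{2\sqrt{2}}{\sqrt{\epsilon}}(\sqrt{\hat\lambda^{i,\epsilon}_t}-\tfrac{\sqrt{\epsilon}}{2\sqrt{2}})\wedge 1$ by a Lipschitz function $F_{M,\epsilon}\colon\mathbb{R}_+\to[0,M]$ equal to $0$ on $[0,\sqrt{\epsilon}/4]$ and to $M$ on $[\sqrt{\epsilon}/2,+\infty)$. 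By construction $(\bar A_\epsilon)$ coincides with~\eqref{eds} as soon as every coordinate satisfies $\bar\lambda^{i,\epsilon}_t\ge\epsilon/4$, a condition which by the ordering reduces to $\bar\lambda^{1,\epsilon}_t\ge\epsilon/4$.

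I would then establish existence and pathwise uniqueness of a global strong solution to $(\bar A_\epsilon)$ by replaying the argument of Lemma~\ref{Aeta}. Setting $X_t=(\sqrt{\bar\lambda^{1,\epsilon}_t},\ldots,\sqrt{\bar\lambda^{n,\epsilon}_t})$ and applying Ito's formula formally, the drift of the $i$-th root coordinate reads
\begin{equation*}
\frac{\alpha-(n-1)\beta+M-F_{M,\epsilon}(x^i)-1}{2x^i}-\gamma x^i+\beta x^i\sum_{j\ne i}\frac{1}{(x^i)^2-(x^j)^2},
\end{equation*}
and decomposes as $-\nabla\Psi(X_t)+g^\epsilon(X_t)-\gamma X_t$ with
\begin{equation*}
\Psi(x)=-\sum_{i=1}^n\Bigl[\tfrac{\alpha-(n-1)\beta+M-1}{2}\ln|x^i|+\tfrac{\beta}{4}\sum_{j\ne i}\bigl(\ln|x^i-x^j|+\ln|x^i+x^j|\bigr)\Bigr],
\end{equation*}
and $g^\epsilon_i(x)=-F_{M,\epsilon}(x^i)/(2x^i)$ extended by $0$ at $x^i=0$. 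The choice of $M$ guarantees that the logarithmic coefficient is non-negative, so $\Psi$ is convex on the Weyl chamber in the same way as $\Phi_\gamma$ in Lemma~\ref{Aeta}, while $g^\epsilon$ is bounded and Lipschitz on $\mathbb{R}^n$ because $F_{M,\epsilon}$ vanishes on a neighbourhood of $0$. C\'epa's multivalued SDE theory then delivers a pathwise unique global strong solution, and the vanishing of the boundary process is obtained from the occupation times argument of Lemma~\ref{Aeta} combined with Lemma~\ref{4}.

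Finally I would patch these solutions together. With $S_\epsilon=\inf\{t\ge 0:\bar\lambda^{1,\epsilon}_t\le\epsilon\}$, the stopped process $(\bar\Lambda^\epsilon_{t\wedge S_\epsilon})_{t\ge 0}$ solves~\eqref{eds} on $[0,S_\epsilon]$. Lemma~\ref{pathwise_uniqueness} implies that for $0<\epsilon'<\epsilon$ the two stopped processes coincide on $[0,S_\epsilon]$, hence $S_{\epsilon'}\ge S_\epsilon$, the sequence $(S_{1/n})_{n\ge 1}$ is non-decreasing, and setting $\Lambda_t=\bar\Lambda^{1/n}_t$ for $t\in[0,S_{1/n})$ yields a strong solution of~\eqref{eds} on $[0,\lim_{\epsilon\to 0}S_\epsilon)$, whose uniqueness follows directly from Lemma~\ref{pathwise_uniqueness}. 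The main obstacle I anticipate lies in the second step: checking that $\Psi$ is convex and that $g^\epsilon$ remains globally Lipschitz once the shift $M$ has been taken as large as necessary to dominate an arbitrarily negative $\alpha-(n-1)\beta$. This relies crucially on $F_{M,\epsilon}$ vanishing on a genuine interval around $0$, so that the singular factor $1/(2x^i)$ in $g^\epsilon_i$ is never seen, and on the fact that, unlike the case $0<\alpha-(n-1)\beta<1-\beta$ of Theorem~\ref{existence}, no separate auxiliary SDE of type $(B_\epsilon)$ is needed here.
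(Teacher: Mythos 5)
Your construction is correct, but it takes a genuinely different route from the paper. The paper introduces an auxiliary SDE $(C_\epsilon)$ obtained directly from (\ref{sqrtedsdual}) by replacing the singular term $\tfrac{\alpha-(n-1)\beta-1}{2}\tfrac{1}{x^i}$ (whose coefficient is negative, hence attractive) by the globally Lipschitz $\tfrac{\alpha-(n-1)\beta-1}{2}\tfrac{1}{x^i\vee\epsilon}$, then applies C\'epa's theory with the convex potential reduced to the pure interaction part $-\tfrac{\beta}{4}\sum_{i}\sum_{j\neq i}\left(\ln|x^i-x^j|+\ln|x^i+x^j|\right)$ and a globally Lipschitz perturbation gathering the truncated term and $-\gamma x^i$. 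Your construction instead mirrors $(\hat A_\epsilon)$: you add a large constant $M\ge 2-(\alpha-(n-1)\beta)$ so that the logarithmic coefficient $\tfrac{\alpha-(n-1)\beta+M-1}{2}$ becomes positive (hence the potential $\Psi$ is convex), and subtract a Lipschitz cut-off $F_{M,\epsilon}$ vanishing near zero to cancel the excess drift away from the boundary. Both decompositions satisfy the hypotheses of C\'epa's theory and yield pathwise unique global solutions that agree with (\ref{eds}) before $\lambda^1$ descends to a level of order $\epsilon$, and the patching via Lemma~\ref{pathwise_uniqueness} is the same in both cases. The paper's route is slightly more economical: it needs no auxiliary shift $M$, no new cut-off, and no verification that a new potential is convex, since truncating $1/(x^i\vee\epsilon)$ immediately gives a globally Lipschitz function. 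Your route has the minor aesthetic advantage that the auxiliary SDE carries a genuinely repulsive drift near $\{x^1=0\}$, so the boundary process can be shown to vanish outright (via the occupation-time argument you invoke), whereas in $(C_\epsilon)$ the drift near that face is attractive and the boundary process there may be non-trivial; but this is immaterial, since both solutions are only used up to the first hitting time of the level $\epsilon$, before the boundary $\{x^1=0\}$ is ever approached.
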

\begin{proof}
Let $\epsilon>0$ and let us consider the SDE
  \begin{equation}
  \label{negcond}
     dx^{i,\epsilon}_t  =  dB^i_t +\frac{\alpha-(n-1)\beta- 1}{2}\frac{1}{x^{i,\epsilon}_t\vee\epsilon}dt-\gamma x^{i,\epsilon}_tdt+\beta x^{i,\epsilon}_t\sum_{j\neq i}\frac{dt}{(x^{i,\epsilon}_t)^2-(x^{j,\epsilon}_t)^2}\text{ for }i\in\{1,\dots,n\}\tag{$C_\epsilon$}
    \end{equation}
    \[0\leq x^{1,\epsilon}_t<\dots< x^{n,\epsilon}_t \text{ a.s. } dt-a.e.\nonumber\]
    with initial condition $0\leq \sqrt{\lambda^1_0}\leq\dots\leq \sqrt{\lambda^n_0}$.
    
 As the function $x\mapsto\frac{1}{x\vee\epsilon}$ is globally Lipschitz, we can apply  Cepa's multivoque equations theory (\cite{MR1459451}) to conclude that there exists a unique strong solution to (\ref{negcond}) defined globally in time. Moreover, one can remark that the solution to (\ref{negcond}) is a solution to (\ref{sqrteds}) on $\left[0,\inf\{t\geq0:x^1_t\leq\epsilon\}\right]$ for all $\epsilon>0$. We can then build a solution to (\ref{eds}) up to $S_\epsilon$ for all $\epsilon>0$ by taking the square root of each coordinate. As the solutions to (\ref{eds}) are pathwise unique (see Lemma \ref{pathwise_uniqueness}), the solutions to  (\ref{negcond}) are consistent on the intervals $\left[0,\inf\{t\geq0:x^1_t\leq\epsilon\}\right]$ and we can conclude.

\end{proof}
  
  \section{Appendix}
  The next lemma deals with the existence and uniqueness to the CIR SDE and with the probability for the solution to hit zero. It is proved for instance in \cite[Theorem 6.2.2 and Proposition 6.2.3]{MR2362458}. The point $4.$ comes directly from \cite{RePEc:ecm:emetrp:v:53:y:1985:i:2:p:385-407}.
  \begin{lemma}
  \label{lamberton}
Let $a\geq0,b,\sigma\in\mathbb{R}$.  Suppose that $W$ is a standard Brownian motion defined on $\mathbb{R}_+$. For any real number $x\geq0$, there is a unique continuous, adapted process $X$, taking values in $\mathbb{R}_+$, satisfying $X_0=x$ and $$dX_t=(a-bX_t)dt+\sigma\sqrt{X_t}dW_t \text{ on } [0,\infty).$$
  Moreover, if we denote by $X^x$ the solution to this SDE starting at $x$ and by $\tau_0^x=\inf\{t\geq0 : X_t^x=0\}$,
  \begin{enumerate}
      \item If $a\geq\sigma^2/2$,  we have $\mathbb{P}(\tau^x_0=\infty)=1$, for all $x>0$.
      \item If $0\leq a <\sigma^2/2$ and $b\geq0$, we have  $\mathbb{P}(\tau^x_0<\infty)=1$, for all $x>0$.
      \item If $0\leq a <\sigma^2/2$ and $b<0$, we have  $0<\mathbb{P}(\tau^x_0<\infty)<1$, for all $x>0$.
      \item For all $s>t$,\[\mathbb{E}[r_s|r_t] = r_te^{-b(s-t)}+\frac{a}{b}(1-e^{-b(s-t)}).\] 
  \end{enumerate}
  \end{lemma}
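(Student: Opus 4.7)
The plan is to separate the statement into four independent pieces, each of which is classical for one-dimensional diffusions.

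\textbf{Existence and pathwise uniqueness.} The drift $x \mapsto a - bx$ is globally Lipschitz and the diffusion coefficient $x \mapsto \sigma\sqrt{x}$ is $\tfrac{1}{2}$-Hölder on $\mathbb{R}_+$. Since $\int_{0+} du/u^{2\cdot 1/2} = +\infty$, the Yamada-Watanabe criterion applies and pathwise uniqueness follows. For existence, I would first kill the linear drift by the exponential change of variable $Y_t = e^{bt} X_t$, which (combined with a deterministic time change $u = \sigma^2(e^{2bt}-1)/(8b)$) reduces the CIR SDE to the squared Bessel SDE $dZ_u = \delta\, du + 2\sqrt{Z_u}\,d\tilde W_u$ of dimension $\delta = 2a/\sigma^2 \geq 0$. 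For integer $\delta$, $Z$ is the squared Euclidean norm of a $\delta$-dimensional Brownian motion and a strong solution is explicit; for general $\delta \geq 0$ one uses the additivity of squared Bessel processes (for instance as in Revuz-Yor Ch.~XI), or invokes Yamada-Watanabe once weak existence is established by the martingale problem. Reversing the change of variables gives a pathwise unique strong CIR solution $X^x$ on $[0,\infty)$, staying non-negative by construction.

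\textbf{Hitting time of zero (points 1--3).} I would apply Feller's test to the generator $\mathcal{L} = \tfrac{\sigma^2 x}{2}\partial_x^2 + (a-bx)\partial_x$ on $(0,\infty)$. A direct computation gives the scale density
\begin{equation*}
s'(x) = x^{-2a/\sigma^2}\exp\!\bigl(2bx/\sigma^2\bigr),
\end{equation*}
so near $0$, $s'(x) \sim x^{-2a/\sigma^2}$ and hence $s(0^+) > -\infty$ iff $a < \sigma^2/2$. When $a \geq \sigma^2/2$ we have $s(0^+) = -\infty$, which means $0$ is an unattainable (entrance) boundary, giving $\mathbb{P}(\tau_0^x = \infty) = 1$ and establishing point 1. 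When $a < \sigma^2/2$ and $b \geq 0$, $s(\infty) = +\infty$ (because $e^{2bx/\sigma^2}$ is nondecreasing and $x^{-2a/\sigma^2}$ is not integrable at infinity for $2a/\sigma^2 < 1$), and the process is positive recurrent (the invariant gamma density gives $\mathbb{E}[X_t]$ bounded); standard one-dimensional diffusion theory then yields $\mathbb{P}(\tau_0^x < \infty) = 1$, proving point 2. When $a < \sigma^2/2$ and $b < 0$, the exponential factor $e^{2bx/\sigma^2}$ decays at infinity, so $s(\infty) < \infty$. Both boundaries then have finite scale, the process hits either $0$ or escapes toward $\infty$, and the classical formula
\begin{equation*}
\mathbb{P}_x(\tau_0 < \infty) = \frac{s(\infty) - s(x)}{s(\infty) - s(0^+)} \in (0,1),
\end{equation*}
(together with the non-explosion of the CIR process, which holds because the drift is linear) gives point 3.

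\textbf{Conditional expectation (point 4).} Write the SDE in integrated form between $t$ and $s$. The stochastic integral $\int_t^s \sigma\sqrt{X_u}\,dW_u$ is a true martingale because Grönwall applied to $\phi(u) := \mathbb{E}[X_u]$ (which satisfies $\phi(u) \leq x + \int_0^u (a - b\phi(v))dv$ after a localization-Fatou step) yields $\sup_{u \leq T}\mathbb{E}[X_u] < \infty$, hence $\mathbb{E}\!\int_t^s X_u\,du < \infty$. Taking conditional expectations given $\mathcal{F}_t$ and applying Fubini gives, for $f(s) := \mathbb{E}[X_s \mid \mathcal{F}_t]$, the linear ODE $f'(s) = a - bf(s)$ with $f(t) = X_t$, whose solution is exactly $X_t e^{-b(s-t)} + (a/b)(1 - e^{-b(s-t)})$.

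The main obstacle is the boundary analysis in point 3: one must carefully check that the scale function is finite at both endpoints and that $\infty$ is inaccessible (non-explosion) before invoking the ratio formula; the existence/uniqueness step is routine once the Bessel reduction is written down, and point 4 is an essentially one-line ODE.
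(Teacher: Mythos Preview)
The paper does not actually prove this lemma: it is stated in the Appendix as a well-known result and simply referred to \cite{MR2362458} (Theorem~6.2.2 and Proposition~6.2.3) for existence, uniqueness and the boundary classification, and to \cite{RePEc:ecm:emetrp:v:53:y:1985:i:2:p:385-407} for the conditional-mean formula. So there is no in-paper argument to compare your proposal against; your write-up is in fact more detailed than what the authors provide.

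Your approach is the standard one and is correct in outline. A couple of minor points worth tightening if you want this to stand alone: in point~2 you invoke ``positive recurrence'', but for $b=0$ the process is only null recurrent --- what you really use is $s(0^+)>-\infty$, $s(\infty)=+\infty$, and accessibility of $0$ (finiteness of the relevant speed-measure integral near $0$), which together give $\mathbb{P}(\tau_0<\infty)=1$ without any recurrence statement. In point~3, your ratio formula is indeed $\lim_{M\to\infty}\mathbb{P}_x(\tau_0<\tau_M)$, and you correctly note that non-explosion (linear drift) is needed to identify this limit with $\mathbb{P}_x(\tau_0<\infty)$. Finally, the formula in point~4 is to be read as $a(s-t)$ when $b=0$.
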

  
      The following result is the  Ikeda-Watanabe Theorem, which allows to compare two Itô processes if their starting points and their drift coefficients are comparable, and if their diffusion coefficients  are regular enough. It is proved for instance in \cite[Theorem V.43.1 p269]{MR1780932}.
 \begin{theorem}(Ikeda-Watanabe)
 \label{Ikeda}
 Suppose that, for $i=1,2$, 
 \begin{equation}
     X^i_t = X^i_0+\int_0^t\sigma(X^i_s)dB_s+\int_0^t\beta_s^ids,
 \end{equation}
 and that there exist $b:\mathbb{R}\mapsto\mathbb{R}$, such that
 $$\beta^1_s\geq b(X^1_s)\text{, }b(X^2_s)\geq \beta^2_s.$$
 Suppose also that
 \begin{enumerate}
     \item $\sigma$ is measurable and there exists an increasing function $\rho:\mathbb{R}_+\mapsto\mathbb{R}_+$ such that $$\int_{0^+}\rho(u)^{-1}du=\infty,$$ and for all $x,y\in\mathbb{R},$
     $$(\sigma(x)-\sigma(y))^2\leq\rho(|x-y|);$$
     \item $X^1_0\geq X^2_0$ a.s.;
     \item $b$ is Lipschitz.
 \end{enumerate}
 Then $X^1_t\geq X^2_t$ for all $t$ a.s..
 \end{theorem}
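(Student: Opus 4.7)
The plan is to prove the Ikeda--Watanabe comparison theorem by adapting the Yamada--Watanabe approximation of the function $x\mapsto x^+$ to the difference process, combined with a Gronwall argument. Set $D_t=X^2_t-X^1_t$; the goal is to show that $D_t\leq 0$ for all $t\geq 0$ almost surely. By continuity of the paths, it suffices to prove that for every fixed $t\geq 0$, $\mathbb{E}[D_t^+]=0$.

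First, I would construct the Yamada--Watanabe approximating functions adapted to the modulus $\rho$. Using the hypothesis $\int_{0^+}\rho(u)^{-1}du=+\infty$, choose a decreasing sequence $a_n\downarrow 0$ with $a_0=1$ and $\int_{a_n}^{a_{n-1}}\rho(u)^{-1}du=n$, and then pick non-negative continuous functions $\psi_n$ supported on $(a_n,a_{n-1})$ satisfying $0\leq \psi_n(x)\leq \frac{2}{n\rho(x)}$ and $\int_{a_n}^{a_{n-1}}\psi_n(u)du=1$. Define
\[
\phi_n(x)=\int_0^x\int_0^y\psi_n(u)\,du\,dy,\quad x\geq 0,\qquad \phi_n(x)=0,\quad x\leq 0.
\]
These functions are $C^2$, satisfy $0\leq\phi_n'\leq \mathbf{1}_{\{x>0\}}$, $\phi_n''(x)=\psi_n(x)\geq 0$, and $\phi_n(x)\uparrow x^+$.

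Next I would apply Itô's formula to $\phi_n(D_t)$ (after a standard localization by the stopping time $\tau_M=\inf\{t\geq 0:|X^1_t|+|X^2_t|\geq M\}$ to ensure that the stochastic integral has zero expectation). Writing $dD_t=(\sigma(X^2_t)-\sigma(X^1_t))dB_t+(\beta^2_t-\beta^1_t)dt$, I get
\[
\mathbb{E}[\phi_n(D_{t\wedge\tau_M})]=\mathbb{E}[\phi_n(D_0)]+\mathbb{E}\int_0^{t\wedge\tau_M}\phi_n'(D_s)(\beta^2_s-\beta^1_s)ds+\frac{1}{2}\mathbb{E}\int_0^{t\wedge\tau_M}\phi_n''(D_s)(\sigma(X^2_s)-\sigma(X^1_s))^2ds.
\]
The initial term vanishes since $D_0\leq 0$ a.s. and $\phi_n\equiv 0$ on $\mathbb{R}_-$. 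For the quadratic variation term, the key control is $(\sigma(X^2_s)-\sigma(X^1_s))^2\leq\rho(|D_s|)$, so $\phi_n''(D_s)(\sigma(X^2_s)-\sigma(X^1_s))^2\leq\psi_n(D_s)\rho(D_s)\leq\frac{2}{n}$, and this term tends to zero as $n\to\infty$. For the drift term, I would use the comparison hypothesis $\beta^2_s\leq b(X^2_s)$ and $\beta^1_s\geq b(X^1_s)$ together with the fact that $\phi_n'(D_s)=0$ whenever $D_s\leq 0$, which gives
\[
\phi_n'(D_s)(\beta^2_s-\beta^1_s)\leq \phi_n'(D_s)(b(X^2_s)-b(X^1_s))\mathbf{1}_{\{D_s>0\}}\leq K\phi_n'(D_s)D_s^+,
\]
where $K$ is the Lipschitz constant of $b$. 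Passing to the limit $n\to\infty$ by monotone/dominated convergence yields
\[
\mathbb{E}[D_{t\wedge\tau_M}^+]\leq K\int_0^t\mathbb{E}[D_{s\wedge\tau_M}^+]ds,
\]
and Gronwall's lemma gives $\mathbb{E}[D_{t\wedge\tau_M}^+]=0$. Letting $M\uparrow\infty$ via Fatou concludes that $D_t\leq 0$ a.s., and pathwise continuity upgrades this to the uniform statement.

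The main obstacle is verifying cleanly that the drift-comparison inequality passes through the approximating functions: one must exploit that $\phi_n'$ is zero on $\{D_s\leq 0\}$ to convert $\phi_n'(D_s)(\beta^2_s-\beta^1_s)\leq \phi_n'(D_s)(b(X^2_s)-b(X^1_s))$ into something controlled by $D_s^+$ using only the Lipschitz property of $b$ on the event $\{D_s>0\}$, which is exactly the event $\{X^2_s>X^1_s\}$. Once this is handled the rest is the standard Yamada--Watanabe machinery combined with Gronwall, and the condition $\int_{0^+}\rho^{-1}=\infty$ is used precisely to build the $\phi_n$ whose second derivative kills the quadratic variation error term.
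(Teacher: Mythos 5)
Your argument is correct and is the standard Yamada--Watanabe approximation proof of the comparison theorem. The paper does not reprove Theorem~\ref{Ikeda}; it simply cites Rogers and Williams, and the proof there rests on exactly the same mechanism you use: the Yamada modulus condition $\int_{0^+}\rho^{-1}=\infty$ is exploited to kill the quadratic-variation error for $(X^2-X^1)^+$ (either through the $\phi_n$ approximating sequence as you do, or equivalently by showing the local time of $X^2-X^1$ at $0$ vanishes and then applying Tanaka), after which the Lipschitz drift comparison and Gr\"onwall finish the job. Two small points worth making explicit when you write this up: first, the localization $\tau_M$ controls not only the stochastic integral but also $\rho(|D_s|)$, which is why you can assert the martingale property (note $\sigma$ is automatically locally bounded since $|\sigma(x)-\sigma(0)|\le\sqrt{\rho(|x|)}$); second, in passing from $\mathbb{E}\int_0^{t\wedge\tau_M}D_s^+\,ds$ to $\int_0^t\mathbb{E}[D_{s\wedge\tau_M}^+]\,ds$ you use $D_s^+\mathbf{1}_{\{s<\tau_M\}}\le D_{s\wedge\tau_M}^+$, which is the step that sets up Gr\"onwall correctly.
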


\nocite{*}
\bibliographystyle{amsalpha}
\bibliography{redac}

\end{document}